\renewcommand{\baselinestretch}{1.5}
\DeclareMathOperator*{\argmin}{argmin}
\DeclareMathOperator*{\diam}{diam}
\newtheorem{thm}{Theorem}
\newtheorem{lemma}{Lemma}
\newtheorem{corollary}{Corollary}
\begin{document}

\title{Asymptotically Honest Confidence Regions for High Dimensional Parameters by the Desparsified Conservative Lasso
}
\author{\textsc{Mehmet Caner\thanks{%
Ohio State University, 452 Arps Hall, Department of Economics, Translational Data Analytics, Department of Statistics, OH 43210. Email:caner.12@osu.edu.}} \and \textsc{Anders Bredahl Kock\thanks{%
Oxford University, Aarhus University and CREATES, Department of Economics, Manor Road, Oxford, OX1 3UQ, UK. Email: anders.kock@economics.ox.ac.uk.
We would like to thank Victor Chernozhukov and Andrea Montanari for pointing us to relevant related research. The paper has also benefited tremendously from insightful comments by the co- and associate editor as well as the referees. Financial support from the Danish National Research Foundation is gratefully acknowledged by the second author (grant DNRF78). First version: October 2014.
}} }
\date{\today}

\maketitle

\begin{abstract}
In this paper we consider the conservative Lasso which we argue penalizes more correctly than the Lasso and show how it may be desparsified in the sense of \cite{van2014} in order to construct asymptotically honest (uniform) confidence bands. In particular, we develop an oracle inequality for the conservative Lasso only assuming the existence of a certain number of moments. This is done by means of the Marcinkiewicz-Zygmund inequality. We allow for heteroskedastic non-subgaussian error terms and covariates. Next, we desparsify the conservative Lasso estimator and derive the asymptotic distribution of tests involving an increasing number of parameters. Our simulations reveal that the desparsified conservative Lasso estimates the parameters more precisely than the desparsified Lasso, has better size properties and produces confidence bands with superior coverage rates. 

%As a stepping stone towards this, we also provide a feasible uniformly consistent estimator of the asymptotic covariance matrix of an increasing number of parameters which is robust against conditional heteroskedasticity.  Next, we show that our confidence bands are honest over sparse high-dimensional sub vectors of the parameter space and that they contract at the optimal rate. 
%All our results are valid in high-dimensional models and the number of parameters involved in our tests can diverge to infinity. 

\noindent \textit{Keywords and phrases}: conservative Lasso, honest inference, high-dimensional data, uniform inference, confidence intervals, tests.

\noindent\textit{JEL codes}: C12, C13, C21.

%\noindent \textit{\medskip JEL classification}: C12, C13, C21.
\end{abstract}

\section{Introduction}
In recent years we have seen a burgeoning literature on high-dimensional problems where the number of parameters is much greater than the sample size. Statistical inference in the sense of constructing tests and confidence bands in the high-dimensional linear regression model were considered in a seminal series of papers by \cite{belloni2010sparse,vicecta2012, vicrestud2012, belloni2014inference,belloni2011inference}. These authors showed how a cleverly constructed (double) post selection estimator can be used to construct uniformly valid confidence intervals for the parameter of interest in instrumental variable and treatment effect models allowing for imperfect model selection in the first step. Also \cite{fanliao15} show how to set up test statistics in high dimensions with power enhancing components  against sparse alternatives.
 %At first, much focus was devoted to establishing the so-called oracle property in models of fixed or increasing dimensions, see e.g. \cite{fan2001variable}, \cite{zou06}, and \cite{huang2008asymptotic}. This entails showing that the procedure asymptotically detects the correct, and only the correct, variables and that the non-zero coefficients have the same asymptotic distribution as if only the relevant variables had been included in the model from the outset.
%The oracle property is an asymptotic one and in recent years more focus has been devoted to establishing finite sample oracle inequalities for the estimation and prediction error. That is, finite sample upper bounds on the estimation and prediction error that are valid with high probability. Pioneering work in this direction was done by \cite{bickelrt2009}. For excellent reviews, see \cite{bvdg2011}, \cite{fan2010}, and \cite{belloni2011high}.
%\cite{belloni2014pivotal} have constructed uniform confidence bands for a finite dimensional parameter of interest in a general semi-parametric problem by means of the post $\sqrt{\text{Lasso}}$-estimator and have innovatively extended this to the many parameter case in \cite{belloni2013uniform}.
%There are also other papers related to our work.
 \cite{nickl2013confidence} consider honest adaptive inference when $p > n$. This can be obtained as long as the rate of sparse estimation does not exceed $n^{-1/4}$. %Our confidence intervals are built on using tests that can use fixed linear functions of parameters. We provide critical values for these test. Our confidence interval attains this optimal rate. 
\cite{hoffmann2011adaptive} consider the existence of honest adaptive confidence bands for an unknown
density function. They show that this is possible if the non-parametric hypotheses for the
null and alternative are asymptotically consistently distinguishable. \cite{berk2013valid} propose
a conservative post selection inference method. The idea is simultaneous inference in all
models' submodels and this results in very wide confidence intervals. \cite{taylor2015statistical} discuss a practical way of taking into account the model selection's 
effect on post selection inference. \cite{tibshirani2011regression} provides a nice summary of developments in the literature while \cite{lockhart2014significance} provide a computation based significance test for Lasso estimators. Also \cite{zouli08} and \cite{fanfan14} used adaptive weights in Lasso type estimators that enhance model selection.

%Methods for analyzing high-dimensional data sets have also found increasing use in econometrics as many economic data sets are becoming increasingly high-dimensional. For example, \cite{belloni2010sparse,vicecta2012} have shown how the least absolute shrinkage and selection operator (Lasso) of \cite{tibshirani96} can be used to select among many instruments. Furthermore, \cite{vicrestud2012, belloni2014inference} have shown how the Lasso can be used to make inference on treatment effects in the presence of a high-dimensional set of control variables. The authors also allow for non-gaussian heteroskedastic errors and construct uniformly valid confidence bands for the treatment effect. \cite{caner2014adaptive} have considered GMM estimation with many parameters while \cite{kock2013oracle} has considered estimation and inference in large panel data models.

The paper closest in spirit to ours is \cite{van2013asymptotically, van2014} who cleverly showed how the classical Lasso estimator may be \textit{desparsified} to construct asymptotically valid confidence bands for a low-dimensional subset of a high-dimensional parameter vector. This paper in turn is related to \cite{zhang2014confidence}, \cite{javanmard2013hypothesis} and \cite{javanmard2013confidence}. The idea behind desparsification is to remove the bias introduced by shrinkage by desparsifying the estimator using a cleverly constructed approximate inverse of the non-invertible empirical Gram matrix. Furthermore, these confidence bands do not suffer from the critique of \cite{potscher2009confidence} regarding the overly large size of confidence bands based on consistent variable selection techniques. By using the desparsified Lasso to construct confidence bands and tests, \cite{van2014} strike a middle ground between classical low dimensional inference, which relies heavily on testing, and Lasso-type techniques which perform estimation and variable selection in one step without any testing. %Finally, the confidence bands are are \textit{honest} over a subset of sparse vectors in the parameter space. Honesty, is of course an important property as it guarantees uniform coverage over the certain subsets of the parameter space.

In the framework of the high-dimensional linear regression model and inspired by the work of \cite{van2014} we study the so-called conservative Lasso. The important observation here is that, in the presence of an oracle inequality on the plain Lasso, the penalty of the conservative Lasso on the non-zero parameters will be no larger than the one for the Lasso while the penalty on the zero parameters will be the same as the one induced by the plain Lasso. Hence, the conservative Lasso may be expected to deliver more precise parameter estimates (in finite samples) than the Lasso. And indeed, our theoretical results and simulations strongly indicate that this is the case. Also note that recently \cite{fanx2014} proposed a weighted $\ell_1$ penalized estimator with very similar weights. Their focus is on strong oracle optimality and we show that a variant of our conservative Lasso possesses the strong oracle optimality property.

We provide an oracle inequality for the conservative Lasso estimator and use the method of desparsification introduced in \cite{van2014}. This approach has the advantage that the zero and non-zero coefficients do not have to be well-separated (no $\beta_{\min}$-condition is imposed) in order to conduct valid inference. We only assume the existence of $r$ moments as opposed to the classical sub-gaussianity assumption. The oracle inequalities rely on the use of the Marcinkiewicz-Zygmund inequality which we argue delivers slightly more precise estimates than Nemirovski's inequality.

We also show that hypotheses involving an increasing number of parameters can be tested (we are considering a \textit{fixed} sequence of hypotheses) which generalizes the results on hypotheses involving a bounded number of parameters in \cite{van2014}. Furthermore, we allow for heteroskedastic error terms and provide a uniformly consistent estimator of the high-dimensional asymptotic covariance matrix. This is an important generalization in practical problems as heteroskedasticity is omniscient in econometrics and statistics.  A similar approach could be of interest in large linear panel data models under strict exogeneity.

The simulations show that vast improvements can be obtained by using the  desparsified conservative Lasso as opposed to the plain desparsified Lasso. To be precise, the true parameter $\beta_0$ is in general estimated much more precisely and $\chi^2$-tests based on the desparsified conservative Lasso have much better size properties (and often also higher power) than their counterparts based on the desparsified Lasso. 
%Finally, and perhaps most importantly, the confidence intervals based on our procedure have coverage rates much closer to the nominal rate than the confidence bands based on the desparsified plain Lasso. This improvement in the coverage rates comes directly from more precise parameter estimates as well as indirectly through a more precise estimate of the covariance matrix by the use of the conservative Lasso for nodewise regressions instead of the plain Lasso.

When implementing Lasso-type estimators the choice of tuning parameter is important. Thus, in Theorem  \ref{thm5} in the appendix, we show how the method of \cite{fanx2014} can be used to choose the tuning parameter of the variant of the conservative Lasso when the objective is consistent model selection in high dimensions.

The rest of the paper is organized as follows. Section \ref{model} introduces the model and the conservative Lasso. Section \ref{Desp} introduces nodewise regression, desparsification, and the approximate inverse to the empirical Gram matrix. Section \ref{Inference} introduces inference and establishes honest confidence intervals and shows that they contract at the optimal rate. The simulations can be found in Section \ref{MC}. Section \ref{Co} concludes the paper.
All proofs are deferred to the appendix%Section \ref{co} provides an application to growth in the OECD countries. . 

\section{The Model}\label{model}

Before stating the model setup we introduce some notation used throughout the paper.

\subsection{Notation}
For any real vector $x$, we let $\enVert[0]{x}_q$ denote the  $\ell_q$-norm. We will primarily use the $\ell_1$-, $\ell_2$-, and the $\ell_\infty$-norm. For any $m\times n$ matrix $A$, we define $\enVert[0]{A}_\infty=\max_{1\leq i\leq m, 1\leq j\leq n}|A_{i,j}|$. Occasionally we shall also use the induced $\ell_\infty$-norm. This will be denoted by $\enVert[0]{A}_{\ell_\infty}$ and equals the maximum absolute row sum of $A$. For any symmetric matrix $B$, let $\phi_{\min}(B)$ and $\phi_{\max}(B)$ denote the smallest and largest eigenvalue of $B$, respectively. If $x\in\mathbb{R}^n$ and $S$ is a subset of $\cbr[0]{1,...,n}$ we let $x_S$ be the modification of $x$ that places zeros in all entries of $x$ whose index does not belong to $S$. For an $n\times n$ matrix $B$ let $B_S$ denote the submatrix of $B$ consisting only of the rows and columns indexed by $S$. If $S=\cbr[0]{j}$ is a singleton set, we use $B_j$ as shorthand for the $j$'th diagonal element of $B$.

For any set $S$, let $|S|$ denote its cardinality and for $x\in \mathbb{R}^n$ its prediction norm is defined as $\enVert[0]{x}_n=\sqrt{\frac{1}{n}\sum_{i=1}^nx_i^2}$. $\stackrel{d}{\to}$ will indicate convergence in distribution and $o_p(a_n)$ as well as $O_p(b_n)$ are used in their usual meaning for sequences $a_n$ and $b_n$. $a_n\asymp b_n$ means that these sequences differ at most by strictly positive multiplicative constants.

\subsection{The model}
We consider the model
\begin{equation}
Y=X\beta _{0}+u\text{,}  \label{0}
\end{equation}%
where $X$ is the $n\times p$ matrix of explanatory variables and $u$ is a vector of error terms. $\beta _{0}$ is the $p\times 1$ population regression coefficient which we shall assume to be sparse. However, the location of the non-zero coefficients is unknown and potentially $p$ could be much greater than $n$. The sparsity assumption can be replaced by a weak sparsity assumption as we shall make precise after Theorem \ref{thm1} below. We assume that the explanatory variables are exogenous and precise assumptions will be made in Assumption 1 below. Let $S_0=\cbr[0]{j:\beta_{0,j}\neq 0}$ and $s_0=|S_0|$. For later purposes define $X_j$ as the $j$'th column of $X$ and  $X_{-j}$ as all columns of $X$ except for the $j$'th one.

\subsection{The conservative Lasso and comparison to (adaptive) Lasso}\label{CLsub}

The conservative Lasso is a two-step estimator defined as the weighted Lasso 
\begin{align}
\hat{\beta}=\argmin_{\beta\in\mathbb{R}^p} \{ \enVert[1]{Y-X\beta}_n^2+2\lambda_n\sum_{j=1}^p\hat{w}_j\envert[0]{\beta_j} \}\label{ConsLassoObj}
\end{align}
with weights $\hat{w}_j = \frac{\lambda_{prec}}{|\hat{\beta}_{L,j}| \vee \lambda_{prec}}$ where $\hat{\beta}_L$ is the plain Lasso estimator which is used to construct the weights $\hat{w}_j$. The plain Lasso corresponds to $w_j=1$ for $j=1,...,p$ in (\ref{ConsLassoObj}). Here $\lambda_n$ and $\lambda_{prec}$ are positive non-random quantities chosen by the researcher which we shall be specific about shortly. In Lemma \ref{lemma9} and the simulation section we show that $\lambda_{prec}$ can be chosen as an estimable multiple of $\lambda_n$. Hence, the only tuning parameter is $\lambda_n$. We choose $\lambda_n$ by  either BIC or the Generalized Information Criterion (GIC) of \cite{fantang13}. Details are provided in the Monte Carlo section. A theorem tying GIC to model selection consistency of a variant of our conservative Lasso (which will be described in the next subsection) is at the end of Appendix B.
%and hence we do not need to know more than choice of $\lambda_n$. So $\lambda_{prec}$ does not behave like a new tuning parameter that has to be analyzed.

As opposed to the adaptive Lasso, the conservative Lasso gives variables that were excluded by the first step initial Lasso estimator a second chance --- even if $|\hat{\beta}_{L,j}|=0$ one has $\hat{w}_j=1$ instead of an ``infinitely" large penalty. Hence, the name ``conservative" Lasso. The adaptive Lasso usually performs its worst when a relevant variable has been left out by the initial Lasso estimator. The conservative Lasso rules out such a situation while still using more intelligent weights than the Lasso as we shall see shortly. Note that our definition of the conservative Lasso is at first glance slightly different from the one on page 205 in \cite{bvdg2011}. %since we have merged one of the tuning parameters into the definition of the weights. However, the difference is merely a matter of parameterization. Furthermore, these authors do not provide any inferential procedure for the conservative Lasso, they merely suggest it as a potential estimator.

We shall choose $\lambda_{prec}$ to equal an upper bound on the estimation error of the first step Lasso for reasons to be made clear next. In particular, assume that $\lambda_{prec}$ is such that $\mathcal{C}_1=\cbr[1]{\enVert[0]{\hat{\beta}_L-\beta_0}_{\infty} \leq \lambda_{prec}}$ is a set with large probability.  Lemma \ref{lemma9} in the Appendix provides a concrete choice of $\lambda_{prec}$ ensuring that $\mathcal{C}_1$ occurs with high probability. %In Lemma \ref{lt1} below, we show that $\lambda_{prec}  \to 0$.
In Theorem \ref{thm1} below we shall give examples of $\lambda_{prec}$. 

Recently \cite{fanx2014} proposed a one step solution to folded concave penalized estimation of which a subcase is the SCAD of \cite{fan2001variable}. This weighted $\ell_1$ penalty approach is similar to our conservative Lasso. Unlike our fractional weight structure their weights are normalized and truncated by a multiple of $\lambda_n$. Like us, \cite{fanx2014} also solve the zero denominator issue of the adaptive Lasso, as pointed out by \cite{fanl2008} and \cite{fan2010}. However, their paper's emphasis is on strong oracle optimality, which we shall discuss in more details when introducing our variant of the conservative Lasso in the next subsection, while we are interested in constructing tests and confidence bands. 

% They show that this solution can be analyzed through local linear algorithm of \cite{zoul2008}. This LLA algorithm, turns the concave regularization problem into a sequence of weighted $l_1$ penalized problems.  

As is standard in the literature we assume that the covariates $X_i$ are i.i.d.\ with $\Sigma=E(X_1 X_1')$ satisfying an adaptive restricted eigenvalue condition: %for $|S|\leq s$, 
\begin{align}
\phi_{\Sigma}^2(s)= \min_{\substack{ \delta\in \mathbb{R}^{p}\setminus\left\{0\right\}\\ \enVert{\delta_{{S}^c}}_{1}\leq 3\sqrt{s}\hspace{.05cm} \enVert{\delta_{S}}_{2}}}\frac{\delta'\Sigma\delta}{\enVert{\delta_{S}}_2^2}>0,\label{REs}
\end{align}
where $S\subseteq \left\{1,...,p\right\}$. Instead of minimizing over all of $\mathbb{R}^{p}$, the minimum in (\ref{REs}) is restricted to those vectors which satisfy $\enVert{\delta_{{S}^c}}_{1}\leq 3 \sqrt{s} \enVert{\delta_{S}}_{2}$. Thus, the adaptive restricted eigenvalue condition is satisfied in particular when $\Sigma$ has full rank.

%Notice that the adaptive restricted eigenvalue condition is trivially satisfied if $\Sigma$ has full rank since $\delta_{S}'\delta_{S}\leq \delta'\delta$ for every $\delta\in \mathbb{R}^{p}$ and so,
%\begin{align*}
%\frac{\delta'\Sigma\delta}{\enVert{\delta_{S}}_2^2}
%\geq 
%\frac{\delta'\Sigma\delta}{\enVert{\delta}_2^2}
%\geq
%\min_{\delta\in \mathbb{R}^{p}\setminus\left\{0\right\}} \frac{\delta'\Sigma\delta}{\enVert{\delta}_2^2}>0.
%\end{align*}
%Assuming $\Sigma$ to be of full rank is a rather innocent assumption as $\Sigma$ is nothing else than the population covariance matrix of $X_1$ in the case where $E(X_1)$ is assumed to have mean zero. Since the population covariance matrix is commonly assumed to be of full rank, the adaptive restricted eigenvalue condition is satisfied in particular. We will also see that under Assumption 1, $\hat{\Sigma}=\frac{1}{n}\sum_{i=1}^nX_i X_i'$ does also satisfy a restricted eigenvalue condition if $\Sigma$ does so as long as $p$ is not too large. 

In order to establish an oracle inequality for the conservative Lasso we shall assume the following.

{\bf Assumption 1}. {\it The covariates $X_{i}\in\mathbb{R}^p,\ i=1,...,n$ are independently and identically distributed while the error terms $u_i\in\mathbb{R},\ i=1,...,n$ are independently distributed with $E(u_i|X_i)=0$. Furthermore,  $\max_{1\leq j\leq p}E|X_{1,j}|^r\leq C$ and $\max_{1\leq i\leq n}E|u_i|^r\leq C$ for some $r\geq 2$ and a positive universal constant $C$. $\phi^2_\Sigma(s_0)$ is bounded away from 0.}

Assumption 1 states that the covariates are independently and identically distributed with uniformly bounded $r$'th moments. The assumption of identical distribution of the covariates is mainly made to keep expressions simple but could be relaxed. We will comment in more detail on this later. The error terms are allowed to be non-identically distributed and may, in particular, be conditionally heteroskedastic. Thus, many applications of interest are covered. At this point it is also worth mentioning that in the literature one often assumes that the covariates as well as the error terms are uniformly sub-gaussian. This is a much stronger assumption than the one imposed here and rules out data with heavy tails. However, strengthening our assumption to sub-gaussianity would not cause any trouble and deliver stronger results. In particular, all powers of $p$ below could be replaced by powers of $\log(p)$ which are asymptotically much smaller. A third route which is sometimes taken is to assume the covariates to be bounded, the error terms to possess bounded second moments and then use Nemirovski's inequality to obtain oracle inequalities which only depend on $p$ through its logarithm.  
%It is included here as it reveals that $\lambda_{prec}\asymp \lambda_ns_0$ will work in connection with observation 3 above to ensure that the set $\mathcal{C}$ has a high probability. An upper bound on $\|\hat{\beta}_L - \beta_0 \|_\infty$ would actually be more useful for the choice of $\lambda_{prec}$. Under a quite restrictive assumption of near orthogonality of $\Sigma$, \cite{lounici08} has shown that $\|\hat{\beta}_L - \beta_0 \|_\infty$ is of the order $\lambda_n$ such that the unknown $s_0$ could be dropped in the choice of $\lambda_{prec}$. 
%Note that we define a random variable $a_n$, as $\| \hat{\omega}_{S_0} \|_2 \le a_n \sqrt{s_0}$. So $a_n \sqrt{s_0}$  is a bound on the weights measured in $l_2$ norm for nonzero parameters. 

%\item $\hat{w}_j=1$ for all $j\in S_0^c$ since $\envert[0]{\hat{\beta}_{L,j}}= \envert[0]{\hat{\beta}_{L,j}-\beta_{0,j}}\leq \lambda_{prec}$ for all $j\in S_0^c$. 

%We now provide a Lemma that shows desirable properties of conservative lasso.
Define $\Theta = \Sigma^{-1}$, and $\| \hat{w}_{S_0} \|_{\infty} = \max_{j \in S_0} | \hat{w}_j|$, which is the maximal weight among all the relevant variables. We are now ready to state the oracle inequality for the weighted Lasso estimator in (\ref{ConsLassoObj}).
\begin{thm}\label{thm1}
Let Assumption 1 be satisfied, set $\lambda_n=M\frac{p^{2/r}}{n^{1/2}}$ for $M>0$ and $\lambda_{prec}=\frac{9 \lambda_n}{4} \|\Theta\|_{l_{\infty}}$.
%and define random variable $a_n$  which takes values in $0 < a_n \le 1$
Then, with probability at least $1-\frac{C}{M^{r/2}}-D\frac{p^2s^{r/2}_0}{n^{r/4}}$, the conservative Lasso satisfies the following inequalities
\begin{align}
\| X (\hat{\beta} - \beta_0) \|_n^2 \label{IQ1thm1}
\le
 2 (2 \enVert[0]{\hat{w}_{S_0}}_{\infty} +1)^2 \frac{ \lambda_n^2s_0 }{\phi^2_\Sigma (s_0)},\\
\|\hat{\beta} - \beta_0 \|_1 
\le 
4 (\enVert[0]{\hat{w}_{S_0}}_{\infty} +1) (2 \enVert[0]{\hat{w}_{S_0}}_{\infty} +1) \frac{\lambda_ns_0}{\phi^2_\Sigma (s_0)},\label{IQ2thm1}
\end{align}
for universal constants  $C, D>0$. Furthermore, these bounds are valid uniformly over the $\ell_0$-ball $\mathcal{B}_{\ell_0}(s_0)=\cbr[1]{\enVert{\beta_{0}}_{\ell_0}\leq s_0}$.
\end{thm}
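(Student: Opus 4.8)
The plan is to transfer essentially all of the probabilistic content to Lemma \ref{Lasso} and then to argue \emph{deterministically} that, on the event on which the Lasso oracle inequality holds, the conservative Lasso satisfies the very same basic inequality as the plain Lasso. The key starting observation is that the prescribed choice $\lambda_{prec}=24\lambda_n s_0/\phi_\Sigma^2(s_0)$ is precisely the $\ell_1$-bound \eqref{IQ2lem1} of Lemma \ref{Lasso}. Hence, on the event of that lemma the set $\mathcal{C}=\cbr[0]{\enVert[0]{\hat\beta_L-\beta_0}_1\le\lambda_{prec}}$ occurs, and by Observations 1) and 2) of Section \ref{CLsub} we then have $\hat w_j\le 1$ for all $j$ and $\hat w_j=1$ for every $j\in S_0^c$. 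These two facts about the weights are the only properties I would use; the data-dependence of $\hat w$ plays no further role. I would intersect this event with the standard empirical-process event $\mathcal{A}=\cbr[0]{2\enVert[0]{X'u/n}_\infty\le\lambda_n}$ and with an event on which $\hat\Sigma$ inherits a restricted eigenvalue condition from $\Sigma$; the probabilities of the complements of these events are already controlled (via the Marcinkiewicz--Zygmund inequality under the $r$-th moment assumption) in the proof of Lemma \ref{Lasso}, which is exactly why the probability statement here coincides with the one there.

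Writing $\delta=\hat\beta-\beta_0$, optimality of $\hat\beta$ together with $Y=X\beta_0+u$ yields the basic inequality
\begin{align*}
\enVert[0]{X\delta}_n^2\le \tfrac{2}{n}u'X\delta+2\lambda_n\sum_{j=1}^p\hat w_j\del[0]{\envert[0]{\beta_{0,j}}-\envert[0]{\hat\beta_j}}.
\end{align*}
On $\mathcal{A}$ the first term is at most $\lambda_n\enVert[0]{\delta}_1$. For the penalty term I would split the sum over $S_0$ and $S_0^c$: on $S_0^c$ one has $\beta_{0,j}=0$ and $\hat w_j=1$, contributing $-\enVert[0]{\delta_{S_0^c}}_1$, while on $S_0$ the triangle inequality combined with $\hat w_j\le 1$ bounds the contribution by $\enVert[0]{\delta_{S_0}}_1$. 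Collecting terms produces
\begin{align*}
\enVert[0]{X\delta}_n^2\le 3\lambda_n\enVert[0]{\delta_{S_0}}_1-\lambda_n\enVert[0]{\delta_{S_0^c}}_1,
\end{align*}
which is exactly the inequality one obtains for the plain Lasso. This is the crux of the argument: because $\lambda_{prec}$ equals the $\ell_1$-rate of the initial Lasso, the weights on the zero coefficients are not shrunk below one, so the penalty on $S_0^c$ is fully preserved and the effective cone is not enlarged relative to the Lasso.

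From here the argument is the by-now standard one and I do not expect surprises. Non-negativity of $\enVert[0]{X\delta}_n^2$ gives the cone condition $\enVert[0]{\delta_{S_0^c}}_1\le 3\enVert[0]{\delta_{S_0}}_1\le 3\sqrt{s_0}\enVert[0]{\delta_{S_0}}_2$, placing $\delta$ in the set over which the restricted eigenvalue condition \eqref{REs} is taken. Feeding $\enVert[0]{X\delta}_n^2\le 3\lambda_n\sqrt{s_0}\enVert[0]{\delta_{S_0}}_2$ into $\phi_{\hat\Sigma}^2(s_0)\enVert[0]{\delta_{S_0}}_2^2\le\enVert[0]{X\delta}_n^2$ and cancelling one power of $\enVert[0]{X\delta}_n$ gives $\enVert[0]{X\delta}_n\le 3\lambda_n\sqrt{s_0}/\phi_{\hat\Sigma}(s_0)$; squaring yields the prediction bound, and $\enVert[0]{\delta}_1\le 4\enVert[0]{\delta_{S_0}}_1\le 4\sqrt{s_0}\enVert[0]{\delta_{S_0}}_2$ yields the estimation bound. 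The constants $9$ and $12$ appearing with $\phi_{\hat\Sigma}^2$ become $18$ and $24$ with $\phi_\Sigma^2$ once one invokes $\phi_{\hat\Sigma}^2(s_0)\ge\tfrac12\phi_\Sigma^2(s_0)$. I expect the only genuinely delicate ingredient to be this last transfer from the empirical to the population restricted eigenvalue, which rests on bounding $\enVert[0]{\hat\Sigma-\Sigma}_\infty$ uniformly over the $\ell_0$-ball and holds with the stated probability only when $p$ is not too large relative to $n$; however this is already established en route to Lemma \ref{Lasso} and may be imported directly. Finally, uniformity over $\mathcal{B}_{\ell_0}(s_0)$ is automatic, since at no point do the bounds depend on the location of $S_0$, only on its cardinality $s_0$.
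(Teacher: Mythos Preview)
Your proposal is correct and follows essentially the same route as the paper. The only organizational difference is that the paper factors the deterministic oracle-inequality argument into a separate Lemma A.1 for a general weighted Lasso (assuming $\enVert[0]{\hat w_{S_0}}_2\le\sqrt{s_0}$ and $\min_{j\in S_0^c}\hat w_j=1$), and the proof of Theorem \ref{thm1} then merely verifies these weight conditions on $\mathcal{A}\cap\mathcal{B}$ via Observations 1) and 2) and Lemma \ref{Lasso}; your inline derivation of the basic inequality, cone condition, and bounds reproduces exactly the content of that lemma (with the same constants $9$, $12$ becoming $18$, $24$ after passing from $\phi_{\hat\Sigma}^2$ to $\phi_\Sigma^2$).
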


\textbf{Remarks}. 

1. For the Lasso $\enVert[0]{\hat{w}_{S_0}}_{\infty}=1$. Thus, the upper bound in (\ref{IQ1thm1}) takes the value $18  \frac{ \lambda_n^2s_0 }{\phi^2_\Sigma (s_0)}$. For the conservative Lasso we always have $0<\enVert[0]{\hat{w}_{S_0}}_{\infty}\leq 1$ such that the upper bound is no worse than for the Lasso. In fact, the multiplicative constant 18 can be considerably improved in certain settings. To give a concrete example consider Lemma \ref{lt1}(iii) where $\enVert[0]{\hat{w}_{S_0}}_{\infty}\to 0$ on $\mathcal{C}_1=\cbr[1]{\enVert[0]{\hat{\beta}_L-\beta_0}_\infty\leq \lambda_{prec}}$ \footnote{$\mathcal{C}_1$ occurs whenever the event in Theorem \ref{thm1} having probability at least $1-\frac{C}{M^{r/2}}-D\frac{p^2s^{r/2}_0}{n^{r/4}}$ occurs. Thus, we are not working on a smaller event than for the Lasso.} Therefore, the upper bound in (\ref{IQ1thm1}) approaches $2 \frac{ \lambda_n^2s_0 }{\phi^2_\Sigma (s_0)}$ which is 9 times smaller than the bound for the Lasso. Note that Lemma \ref{lt1} (iii) relies on a $\beta_{\min}$-type condition. However, even without this condition, one has $\enVert[0]{\hat{w}_{S_0}}_{\infty}\leq 1$ implying upper bounds for the conservative Lasso that are no worse than the ones for the plain Lasso.

%Namely, on the set 
%$\mathcal{C}=\cbr[1]{\enVert[0]{\hat{\beta}_L-\beta_0}_\infty\leq \lambda_{prec}}$, we have the following property:
%
%First note that
%\[ \sqrt{\sum_{ j \in S_0} \hat{\omega}_j^2} \le \sqrt{ s_0 (\max_{j \in S_0} \hat{\omega}_j)^2} = \sqrt{s_0} \max_{j \in S_0}  (\hat{\omega}_j).\]
%
%Now see that  $\max_{j \in S_0} \hat{w}_j \to 0$ for $j\in S_0$ if $\min_{j \in S_0} \frac{|\beta_{0,j}|}{\lambda_{prec}}\to \infty$. This is because 
%\[ \min_{j \in S_0} |\hat{\beta}_{L,j}|\geq \min_{j \in S_0} [|\beta_{0,j}|-\envert[0]{\hat{\beta}_{L,j}-\beta_{0,j}}]\geq \min_{j \in S_0} [|\beta_{0,j}| -\lambda_{prec}]= \min_{ j \in S_0} [ \lambda_{prec}\del[2]{\frac{|\beta_{0,j}|}{\lambda_{prec}}-1}]\geq \lambda_{prec},\]
% for $n$ large enough. Hence, $\max_{j \in S_0} \hat{w}_j\leq \min_{ j \in S_0} \frac{\lambda_{prec}}{|\hat{\beta}_{L,j}|}\leq \min_{ j \in S_0} \frac{1}{\frac{|\beta_{0,j}|}{\lambda_{prec}}-1} = a_n \to 0$. Note that in that case the upper bound for prediction norm in conservative lasso approaches $2 \frac{ \lambda_n^2s_0 }{\phi^2_\Sigma (s_0)}$. This is 9 times smaller than the bound for lasso in finite samples. 

2. Next consider $\ell_1$ error bounds for the estimation error. For the Lasso the upper bound in (\ref{IQ2thm1}) is $24 \frac{\lambda_ns_0}{\phi^2_\Sigma (s_0)}$ and when $\max_{j \in S_0} \hat{\omega}_j \to 0$, (\ref{IQ2thm1}) approaches $ 4 \frac{\lambda_ns_0}{\phi^2_\Sigma (s_0)}$ for the conservative Lasso by Lemma \ref{lt1} (iii).

3. To simplify the notation in future lemmas and proofs, define $d_{n1}= 2 (2 \enVert[0]{\hat{w}_{S_0}}_{\infty} +1)^2$ and $d_{n2}= 4 (\enVert[0]{\hat{w}_{S_0}}_{\infty}+1) (2 \enVert[0]{\hat{w}_{S_0}}_{\infty} +1)$. %Note that $2 < d_{n1} \le 18$, and $4 < d_{n2} \le 24$ given Remarks 1-2 above.

4. $\lambda_{prec} = \frac{9 \lambda_n}{4} \|\Theta \|_{l_{\infty}}$, and in the simulation section we provide a consistent estimator of 
$\|\Theta \|_{\ell_{\infty}}$. This choice of $\lambda_{prec}$ is motivated by Lemma \ref{lemma9} in the appendix which shows that $\lambda_{prec}$ is a high probability upper bound on the $\ell_\infty$ estimation error of the Lasso. %See that, $\lambda_{prec}$ is only needed to get  all the weights $\hat{\omega}_j \le 1 $. $\lambda_{prec}$ does not affect asymptotic proofs directly. This fact can be seen in the proof of  Theorem \ref{thm1}, Lemma \ref{lemma9}, and at the start of the simulation section.

Finally, the sparsity assumption on $\beta_0$ can be replaced by a bound on $\sum_{j=1}^p|\beta_{0,j}|^q$ for $0<q<1$ as it is not difficult to establish oracle inequalities in such a ``weakly sparse" setting. Thus, none of the entries of $\beta_0$ need to equal exactly zero but we stick to the classical $\ell_0$-sparsity here.

Define $S_j=\cbr[1]{k=1,...,p: \Theta_{j,k}\neq 0}$ as the indices of the non-zero entries of the $j$th row of $\Theta_j$. Let $s_j=|S_j|$.   Define also $\eta_j = X_j - X_{-j} \gamma_j$, which is a $n \times 1$ vector.

{\bf Assumption 2}:
\begin{itemize}
\item[a)] $\phi_{\min}(\Sigma)$ is bounded away from zero.
\item[b)] $\frac{p^2 (\max(s_0, \max_{1 \le j \le p} s_j))^{r/2}}{n^{r/4}}\to 0$. 
\item[c)] $E(|\eta_{j,i}|^r)$ uniformly bounded over $i=1,...,n$ and $j=1,...,p$.
\end{itemize}
Assumption 2a) states that the smallest eigenvalue of the \textit{population} covariance matrix is bounded away from zero. It is used to make sure that $\tau_j^2 = 1/\Theta_{j,j}\geq 1/\phi_{\max}(\Theta)=\phi_{\min}(\Sigma)$ are bounded away from zero. Part b) is needed to show that $\enVert[1]{\hat{\Sigma}-\Sigma}_\infty$ converges to zero sufficiently fast to conclude that the adaptive restricted eigenvalue of $\hat{\Sigma}=\frac{1}{n}X'X$ is close to the one of $\Sigma$. It implies an upper bound on how fast the dimension, $p$, of the model can increase. The more moments one assumes the covariates and the error terms to possess, the faster can $p$ grow. From Assumption 2b), it is clear that since $p \ge \max(s_0, \max_{1 \le j \le p} s_j) $, $\max(s_0, \max_{1 \le j \le p} s_j) = o(n^{\frac{r}{2(r+4)}})$. This restricts the number of non-zero coefficients in the model and each row of $\Theta$. On the other hand, if the error terms and covariates are subgaussian, it is not difficult to show that this requirement is relaxed to $\max(s_0, \max_{1 \le j \le p} s_j) =o(\sqrt{n})$. Intuitively this can be seen by letting $r\to\infty$. Nevertheless, the inverse covariance matrix must be sparse. This is satisfied if $\Sigma$ is e.g. block diagonal or has the Toeplitz structure $\Sigma_{i,j}=\rho^{|i-j|},\ -1<\rho<1$. In the simulations we shall also see that our method works well even if $\Theta=\Sigma^{-1}$ is not sparse as long as its entries are not too far from zero. This is not surprising as the sparsity assumption can easily be relaxed to the weak sparsity assumption of $\sum_{l=1}^p|\Omega_{j,l}|^q$ not being too large for any $j\in H$ for some $0<q<1$ as similar bounds to the ones in Lemma \ref{Node} below remain valid under this assumption. Thus, no entry of $\Theta$ needs to be zero as long as each row can be well approximated by a sparse vector. This observation was also made in \cite{yuan2010high} for a different estimator of $\Theta$.

%Furthermore, on $\mathcal{C}=\cbr[1]{\enVert[0]{\hat{\beta}_L-\beta_0}_{\infty} \leq \lambda_{prec}}$, for all $j \in S_0^c$, $|\hat{\beta}_{L,j}|  =  |\hat{\beta}_{L,j} - \beta_{0,j}| \le \lambda_{prec}$ so $\hat{w}_j =1$. We can get more results on weights as shown in the following lemma.

%\item $\hat{w}_j=1$ for all $j\in S_0^c$ since $\envert[0]{\hat{\beta}_{L,j}}= \envert[0]{\hat{\beta}_{L,j}-\beta_{0,j}}\leq \lambda_{prec}$ for all $j\in S_0^c$. 

Observe that $\hat{w}_j\leq 1$ for all $j=1,...,p$. We now provide a lemma that shows desirable properties of the weights of the conservative Lasso. We caution that the fact that the weights of the non-zero coefficients approaching zero in (iii) comes at the pice of a $\beta_{\min}$ type of condition which rules out very small coefficients. This kind of assumption may not be suitable in economics. Without this type of condition the result in (iii) will not be true, however the weights of the non-zero coefficients of the conservative Lasso will still be smaller than for the plain Lasso.
\begin{lemma}\label{lt1}
Under Assumptions 1-2, with $\lambda_n = M \frac{p^{2/r}}{n^{1/2}}$ for $M>0$, and $\lambda_{prec} = \frac{9 \lambda_n}{4} \|\Theta\|_{\ell_{\infty}}$. Then, 

(i). \[ \lambda_{prec} \to 0,\]

\noindent and on $\mathcal{C}_1=\cbr[1]{\enVert[0]{\hat{\beta}_L-\beta_0}_{\infty} \leq \lambda_{prec}}$, with $P(\mathcal{C}_1)\to 1$, the following two statements hold

(ii). \[ \min_{j\in S_0^c}|\hat{w}_j| = 1,\]

(iii). In addition, if  $\min_{ j \in S_0} |\beta_{0,j}|/\lambda_{prec}\to \infty$ then 
\[  \max_{j\in S_0}|\hat{w}_j| \to 0,\]
\end{lemma}

\textbf{Remarks.} 

1. Lemma \ref{lt1} shows that $\lambda_{prec}= O(\lambda_n \sqrt{\max_{1 \le j \le p} s_j})=o(1)$. Its proof reveals that even if we replace $\|\Theta\|_{\ell_{\infty}}$ by $\|\hat{\Theta}_L \|_{\ell_{\infty}}$ we still get $\lambda_{prec} \stackrel{p}{\to}0$. Note that $\hat{\Theta}_L$ represents the Lasso nodewise regression estimate of the inverse matrix of $\Sigma^{-1}$. It is a subcase of our conservative nodewise regression in Section \ref{AI}, and explained in footnote 4 there.

2. An even better result can be achieved in terms of the conditions needed for $\lambda_{prec}$ to converge to zero. If, for example, $\Sigma$ is an equicorrelation matrix then $\|\Theta \|_{\ell_{\infty}} = O(1)$ by Example 2.5.1 of  \cite{vdG14}. Thus, $\lambda_{prec} = O (\lambda_n)  = o(1)$. The same is the case if $\Sigma_{i,j}=\rho^{|i-j|}$ for some $-1<\rho<1$ which is an often considered structure.

3. Parts (ii) and (iii) of the lemma show that asymptotically no penalty is applied to the coefficients of the relevant variables while the same penalty as for the Lasso is applied to the coefficients of the irrelevant variables. In particular, it is guaranteed that all non-zero coefficients are penalized less than all zero coefficients.

%4. Specifically, Lemma \ref{lt1} (ii) can happen with probability at least $1 - \frac{C}{M^{r/2}} - \frac{D p^2 s_0^{r/2}}{n^{r/4}}$.

We shall see in Section \ref{MC} that the above advantages of the conservative Lasso over the plain Lasso materialize in better performance also in the simulations. %For establishing the uniform validity of our covariance matrix estimator over $\mathcal{B}_{\ell_0}(s_0)=\cbr[1]{\enVert{\beta_{0}}_{\ell_0}\leq s_0}$ in Theorem \ref{thm2} it will turn out to be important that (\ref{IQ2thm1}) is valid uniformly over this set. Theorem \ref{thm1} is mainly used as a tool to prove the validity of our inferential procedure but is also of interest in its own right. 
%Finally, the sparsity assumption on $\beta_0$ can be replaced by a bound on $\sum_{j=1}^p|\beta_{0,j}|^q$ for $0<q<1$ as it is not difficult to establish oracle inequalities in such a ``weakly sparse" setting. Thus, none of the entries of $\beta_0$ need to equal exactly zero but we stick to the classical $\ell_0$-sparsity here.

%In summary, the conservative Lasso has more intelligent weights than the Lasso as seen in Lemma \ref{lt1} and may do better in terms of finite sample prediction and estimation error than the Lasso as we shall see next in Theorem \ref{thm1}. Our simulations show that the conservative Lasso has a lower $\ell_2$ estimation error than the Lasso.

\subsection{A Variant of the Conservative Lasso}
In this section we introduce a variant of the conservative Lasso estimator. This variant possess the property of strong oracle optimality under slightly stronger conditions than Assumptions 1 and 2, see Theorem \ref{thm4} at the end of Appendix B. Strong oracle optimality is defined as an estimator being equal to the oracle estimator with probability approaching one (p.822 of \cite{fanx2014}). As the plain Lasso is generally not strongly oracle optimal this shows superiority of the variant of the conservative Lasso to the former. First, we define the variant of the conservative Lasso as 
\begin{equation}
\tilde{\beta} = \argmin_{\beta \in \mathbb{R}^p} \{ \|Y - X \beta \|_n^2 + 2 \lambda_n \sum_{j=1}^p \tilde{w}_j | \beta_j|\},\label{vcl}
\end{equation}
where $\tilde{w}_j = 1_{ \{ |\hat{\beta}_{L,j}| \le \lambda_{prec} \} }$. In this new variant the weights only take the values 0 or 1. Importantly, this is a variant of the conservative Lasso since all variables still get a second chance after the first step Lasso estimation.

Strong oracle optimality of (\ref{vcl}) is established in Theorem \ref{thm4} at the end of Appendix B. Theorem \ref{thm4} (i) shows that $\min_{j \in S_0^c}\tilde{w}_j =1$ and $\max_{j\in S_0}\tilde{w}_j =0$ with with probability approaching one. Thus, in this variant of  the conservative Lasso the weights pertaining to the non-zero coefficients will be \textit{exactly} equal to zero with probability approaching one while for the conservative Lasso these weights only converge to zero with probability approaching one. This slightly stronger property contributes to obtaining the strong oracle property for the variant of the conservative Lasso in the proof of Theorem \ref{thm4}. Note, however, that Theorem \ref{thm4}(i) comes with a $\beta_{\min}$ type of condition similar to the one in Lemma \ref{lt1}(iii). However, under Assumption 1, Theorem \ref{thm1} above holds also when $\tilde{\beta}$ replaces $\hat{\beta}$. The same holds for Theorems \ref{thm2} and \ref{thm3} if one desparsifies $\tilde{\beta}$ instead of $\hat{\beta}$.% given Lemma \ref{A1} holds for this new variant of the conservative Lasso.

\section{Desparsification}\label{Desp}

%We  discuss desparsification to understand the subsequent inference.

\subsection{The Desparsified Conservative Lasso}

\noindent In order to conduct inference we shall use the idea of desparsification proposed in \cite{van2014}. The idea is that the shrinkage bias introduced due to the presence of penalization in (\ref{ConsLassoObj}) will show up in the properly scaled limiting distribution of $\hat{\beta}_{j}$. Hence, we remove this bias prior to conducting statistical inference. Letting $\hat{W}=diag\del[1]{\hat{w}_1,...,\hat{w}_p}$ be a $p\times p$ diagonal matrix containing the weights of the conservative Lasso, the first order condition of (\ref{ConsLassoObj}) may be written as
\[ -X' (Y- X \hat{\beta} )/n + \lambda_n \hat{W} \hat{\kappa} =0, \]
with $ \| \hat{\kappa} \|_{\infty} \le 1,$
and $\hat{\kappa}_j = sign(\hat{\beta}_j) $ if $\hat{\beta}_j \neq 0$ for $j=1,...,p$. Thus, 
\begin{equation}
 \lambda \hat{W} \hat{\kappa} =X' (Y - X \hat{\beta})/n.\label{kt1}
 \end{equation}
Next, as $Y = X \beta_0 + u$ and defining $\hat{\Sigma} = X'X/n$, the above display yields
\[ \lambda_n \hat{W} \hat{\kappa} + \hat{\Sigma} (\hat{\beta} - \beta_0) = X'u/n.\]
In order to isolate $\hat{\beta}-\beta_0$ we need to invert $\hat{\Sigma}$. However, when $p>n$, $\hat{\Sigma}$ is not invertible. Thus, the idea is now to construct an approximate inverse, $\hat{\Theta}$, to $\hat{\Sigma}$ and control the error term resulting from this approximation. We shall be explicit about the construction of $\hat{\Theta}$ in the next section. For any $p\times p$ matrix we may write, by multiplying the above equation by $\hat{\Theta}$, and adding $\hat{\beta}- \beta_0$ to each side of the above equation,
\begin{equation}
\hat{\beta} = \beta_0 -\hat{\Theta} \lambda_n \hat{W} \hat{\kappa}+\hat{\Theta} X'u/n - \Delta/n^{1/2},\label{2.1}
\end{equation}
where 
\[ \Delta = \sqrt{n} ( \hat{\Theta }  \hat{\Sigma} - I_p ) ( \hat{\beta} - \beta_0),\]
is the error resulting from using an approximate inverse, $\hat{\Theta}$, as opposed to an exact inverse.
We shall show that $\Delta$ is asymptotically negligible. Adding $\hat{\Theta} \lambda_n \hat{W} \hat{\kappa}$ to both sides of (\ref{2.1}) results in the following estimator by using (\ref{kt1})
\begin{eqnarray}
\hat{b} 
&=&
\hat{\beta} + \hat{\Theta} \lambda_n \hat{W} \hat{\kappa} = \hat{\beta}+ \hat{\Theta} X' (Y - X \hat{\beta})/n \label{despClasso} \\  
&=& 
\beta_0 + \hat{\Theta} X'u/n-\Delta/n^{1/2}.\label{2.2}
\end{eqnarray}
Hence, for any $p\times 1$ vector $\alpha$ with $\enVert[0]{\alpha}_2=1$ we can consider
\begin{align}
\sqrt{n}\alpha'\del[1]{\hat{b}-\beta_{0}}
=
\alpha'\hat{\Theta} X'u/n^{1/2}-\alpha'\Delta\label{stat},
\end{align}
such that a central limit theorem for $\alpha'\hat{\Theta} X'u/n^{1/2}$ and a verification of asymptotic negligibility of $\alpha'\Delta$ will yield asymptotically gaussian inference. Furthermore, we provide a uniformly consistent estimator of the asymptotic variance of $\sqrt{n}\alpha'\del[1]{\hat{b}-\beta_{0}}$ which makes inference practically feasible. In connection with Theorem \ref{thm2} we shall see that a central limit theorem for $\alpha'\hat{\Theta} X'u/n^{1/2}$ puts certain limitations on the number of non-zero entries of $\alpha$ in (\ref{stat}), i.e. the number of parameters involved in a hypothesis. A leading special case of the above setting is of course $\alpha=e_j$ where $e_j$ is the $j$'th unit vector for $\mathbb{R}^p$. Then, (\ref{stat}) reduces to
\begin{align}
\sqrt{n}\del[1]{\hat{b}_j-\beta_{0,j}}
=
\del[1]{\hat{\Theta} X'u/n^{1/2}}_j-\Delta_j.\label{stat2}
\end{align}
In general, let $H=\cbr[1]{j=1,...,p: \alpha_j\neq 0}$ with cardinality $h=|H|$. Thus, $H$ contains the indices of the coefficients involved in the hypothesis being tested. We shall allow for $h\to \infty$ as the first in the literature on inference in high-dimensional regression models with more regressors than observations ($p>n$) but require $h/n \to 0$ as $n \to \infty$.

In the next section we construct the approximate inverse $\hat{\Theta}$ which enters in both terms in the above display and thus plays a crucial role for the limiting inference. %Note that we can practically compute the desparsified conservative Lasso from the following equation using (\ref{kt1})
%\[\hat{b} = \hat{\beta} + \hat{\Theta} X'(Y-X\hat{\beta})/n.\]
The above desparsification procedure is similar in spirit to the one outlined in \cite{van2014}. However, $\hat{\beta}$ is used instead of $\hat{\beta}_L$. Furthermore, the construction of the approximate inverse $\hat{\Theta}$ in the next section relies on the conservative Lasso as opposed to the plain Lasso.

%\noindent\textbf{Remark}: Another way to remove the shrinkage bias would be to perform least squares after model selection (by the Lasso) as in \cite{belloni2013least}. However, the Lasso is only guaranteed to include all relevant variable if the non-zero coefficients are well separated from the zero ones ($\beta_{\min}$-condition). An assumption we are avoiding. Thus, in that framework there is in general no guarantee that all relevant variables will be included in the second step least squares estimation and hence inference would be difficult for the parameters of these. However, conducting inference by means of unpenalized post-estimation is an interesting avenue for future research.

\subsection{Constructing the Approximate Inverse of the Gram Matrix: $\hat{\Theta}$}\label{AI}

In this subsection we construct the approximate inverse $\hat{\Theta}$ of $\hat{\Sigma}$. This is done by nodewise regression a la \cite{meinshausen2006high} and \cite{van2014}. To formally define the nodewise regression recall that $X_j$ is the $j$'th column in $X$ and $X_{-j}$ all columns of $X$ except for the $j$'th one. First, along the lines of \cite{van2014} we define the Lasso nodewise regression estimates%
\begin{align}
\hat{\gamma}_{L,j}=\argmin_{\gamma\in\mathbb{R}^{p-1}} \enVert[1]{X_j-X_{-j}\gamma}_n^2+2\lambda_{node, n}\sum_{k\neq j}|\gamma_k|\label{NodeLObj}
\end{align}
for each $j=1,...,p$. We use  these estimates to construct the weights of the conservative Lasso nodewise regression which is defined as follows 
\begin{align}
\hat{\gamma}_{j}=\argmin_{\gamma \in\mathbb{R}^{p-1}} \enVert[1]{X_j-X_{-j}\gamma}_n^2+2\lambda_{node, n}\enVert[1]{\hat{\Gamma}_j\gamma}_1,\label{NodeCLObj}
\end{align}
where $\hat{\Gamma}_j=diag\del[1]{\frac{\lambda_{prec}}{|\hat{\gamma}_{L,l}|\vee \lambda_{prec}},\ l=1,...,p,l\neq j}$ is a $(p-1)\times (p-1)$ matrix of weights. \footnote {For the variant of the conservative Lasso we have
$\tilde{\Gamma}_j=diag \left( 1_{ \{ |\hat{\gamma}_{L,l}|\le \lambda_{prec} \}}, \ l=1,...,p,l\neq j \right)$.}

Note that we choose $\lambda_{node, n}$  to be the same in all of the nodewise regressions. This is needed for the uniform results in Lemma \ref{Node} below to be valid. Thus, the conservative Lasso is run $p$ times as an intermediate step to construct $\hat{\Theta}$. Using the notation $\hat{\gamma}_j=\cbr[1]{\hat{\gamma}_{j,k};\ k=1,...,p,\ k\neq j}$ we define
\begin{equation}
 \hat{C} = \left( \begin{array}{cccc}
			1 & -\hat{\gamma}_{1,2} &  \cdots & -\hat{\gamma}_{1,p} \\
			-\hat{\gamma}_{2,1} & 1 & \cdots & -\hat{\gamma}_{2,p} \\
			\hdots & \hdots & \ddots & \hdots \\
			-\hat{\gamma}_{p,1}&  -\hat{\gamma}_{p,2} &  \cdots &  1 \end{array} \right).\label{Chat}
\end{equation}
To define $\hat{\Theta}$ we introduce $\hat{T}^2 = diag ( \hat{\tau}_1^2, \cdots, \hat{\tau}_p^2)$ which is a $p\times p$ diagonal matrix with 
\begin{align}
\hat{\tau}_j^2 = \| X_j - X_{-j} \hat{\gamma}_j \|_n^2 + \lambda_{node,n} \| \hat{\Gamma}_j \hat{\gamma}_j \|_1\label{tauhat},
\end{align}
for all $j=1,...,p$. We now define
\begin{align}
\hat{\Theta} = \hat{T}^{-2} \hat{C}\label{hattheta}.
\end{align}
\footnote{A practical benefit is that the nodewise regressions actually only have to be run for $j\in H$ and not all $j=1,...,p$ as we only need to estimate the covariance matrix of those parameters involved in the hypothesis being tested.} \footnote{Denote by $\hat{\Theta}_L$ the nodewise regression estimate of $\Theta$ based on the Lasso. This can be obtained by using $\hat{\gamma}_L$ from  (\ref{NodeLObj}) instead of  $\hat{\gamma}$ in (\ref{Chat})-(\ref{hattheta}).} It remains to be shown that this $\hat{\Theta}$ is close to being an inverse of $\hat{\Sigma}$. To this end, we define $\hat{\Theta}_j$ as the $j$'th row of $\hat{\Theta}$ but understood as a $p\times 1$ vector and analogously for $\hat{C}_j$. Thus, $\hat{\Theta}_j=\hat{C}_j/\hat{\tau}_j^2$. 
Denoting by $e_j$ the $j$'th $p\times 1$ unit vector, arguments detailed in appendix C show that
\begin{equation}
\|  \hat{\Theta}_j'\hat{\Sigma} - e_j' \|_{\infty} 
\leq
\frac{\lambda_{node,n}}{\hat{\tau}_j^2}.\label{2.8}
\end{equation}
Hence, the above display provides an upper bound on the maximal absolute entry of the $j$'th row of $\hat{\Theta}\hat{\Sigma}-I_p$ which, combined with the oracle inequality for $\enVert[0]{\hat{\beta}-\beta_0}_1$, will yield an upper bound on $\Delta_j$ in (\ref{stat}) by arguments made rigorous in the appendix.

%We now turn to the properties of the nodewise regressions which will be of great importance for the proof of Theorem \ref{thm2} below. 

%Recently, \cite{javanmard2013confidence} proposed a procedure which avoids this sparsity assumption. However, they only consider low-dimensional hypotheses in the context of sub-gaussian covariates and homoskedastic errors. Thus, neither setup is more general than the other. Assumption 2c) is a moment assumption on the error terms from the nodewise regressions.

Before stating Assumption 3 we introduce the following notation in connection to the asymptotic covariance matrix. Set  $\bar{s}=\max_{j \in H}s_j$, $\Sigma_{xu} = \lim_{n \to \infty} n^{-1} \sum_{i=1}^n E X_i X_i' u_i^2$ and $\hat{\Sigma}_{xu} = n^{-1} \sum_{i=1}^n X_i X_i' \hat{u}_i^2$, where $\hat{u}_i = Y_i - X_i' \hat{\beta}$. 
 
{\bf Assumption 3}.

Let $r\geq 6$ and
\begin{itemize}
\item[a)] $s_0\frac{h^{2/r+1/2}p^{4/r}}{n^{1/2}}\to 0$. 
\item[b)] $\frac{p^{8/r}h\bar{s}}{n^{1/2}}\to 0$.
\item[c)] $\frac{p^{2/r}\sqrt{s_0}h\bar{s}}{n^{1/2}}\to 0$, $\frac{p^{8/r}\sqrt{s_0}h\bar{s}}{n^{3/4}}\to 0$ and $\frac{p^{8/r}s_0h\bar{s}}{n^{(r-2)/r}}\to 0$.
\item[d)] $\frac{(h\bar{s})^{r/4+1}\wedge (h\bar{s})^{r/4}p}{n^{r/4-1}}\to 0$.
\item[e)] $\phi_{\min}(\Sigma_{xu})$ is bounded away from 0 and $\phi_{\max}\del[0]{\Sigma_{xu}}$ is uniformly bounded. $\phi_{\max}(\Sigma)$ is bounded from above. 
\end{itemize}
Assumptions 3a)-d) all restrict the rate at which the size of the model ($p$), the number of relevant variables ($s_0$) as well as the number of coefficients involved in the hypothesis being tested ($h$) are allowed to increase. However, part b) of Assumption 3 reveals that the number of $\beta_{0,j}$ involved in the hypothesis being tested must be of order $o(n^{1/2})$. Letting the number of parameters involved in the hypothesis increase with the sample size is a useful generalization of \cite{van2014} who only mention the possibility of $H$ possessing a fixed or growing number of elements. Part b) also reveals that if one encounters a situation where $p$ increases faster than the sample size, then one needs $r > 16$ for our theory. If one is willing to assume subgaussianity of the covariates and the error terms the powers of $p$ in Assumption 3 could be replaced by powers of $\log(p)$. Furthermore, in a different context, \cite{vicecta2012, belloni2014inference} have used moderate deviation inequalities for self-normalized sums to get results which are of the same order as if subgaussianity was imposed but only assuming certain moments to exist for the covariates and the error terms. In that case, as usual, $p$ can grow almost exponentially in $n$. Assumptions 3a)-d) are trivially satisfied in the classical setting where $p$, $h$, $s_0$ and $\bar{s}$ are fixed.  %It is of course sensible, that these quantities can not grow too fast if one still wishes to obtain standard normal inference with precise estimation of an asymptotic covariance matrix of increasing dimension. 
Finally, Assumption 3e) restricts the eigenvalues of $\Sigma$ and $\Sigma_{xu}$.

\section{Inference}\label{Inference}

This section has two main results. The first result provides sufficient conditions for asymptotically gaussian inference to be valid for linear combinations of the entries of desparsified conservative Lasso $\hat{b}$. The second result shows that the resulting confidence bands are uniformly valid and contract at the optimal rate.

\begin{thm}\label{thm2}
Let Assumptions 1-3 \footnote{Assumption 2b) is of course implied by Assumption 3b) but to keep the statement clean we shall simply assume all of Assumption 2 to be valid.} be satisfied. Then,
\begin{align}
\frac{n^{1/2} \alpha'(\hat{b} - \beta_{0})}{\sqrt{\alpha'\hat{\Theta} \hat{\Sigma}_{xu} \hat{\Theta}'\alpha}}\stackrel{d}{\to}N(0,1),\label{asymdist}
\end{align}
where $\alpha$ is a $p\times 1$ vector with $\enVert[0]{\alpha}_2=1$. Furthermore,
\begin{align}
\sup_{\beta_0\in\mathcal{B}_{\ell_0}(s_0)}\envert[1]{\alpha'\hat{\Theta} \hat{\Sigma}_{xu} \hat{\Theta}'\alpha - \alpha'\Theta \Sigma_{xu} \Theta'\alpha}
=
o_p(1).\label{asymcov}
\end{align}
\end{thm}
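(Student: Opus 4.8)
The plan is to start from the master decomposition (\ref{stat}), namely $\sqrt{n}\alpha'(\hat{b}-\beta_0)=\alpha'\hat{\Theta}X'u/n^{1/2}-\alpha'\Delta$, and to establish three things: (i) the remainder $\alpha'\Delta$ is asymptotically negligible; (ii) the leading term, after replacing $\hat\Theta$ by $\Theta:=\Sigma^{-1}$, obeys a central limit theorem once standardized by the infeasible variance $\alpha'\Theta\Sigma_{xu}\Theta'\alpha$; and (iii) the feasible variance estimator $\alpha'\hat\Theta\hat\Sigma_{xu}\hat\Theta'\alpha$ is uniformly consistent for its infeasible counterpart, which is exactly (\ref{asymcov}). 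Combining (i)--(iii) through Slutsky's lemma then delivers (\ref{asymdist}), so the two displays of the theorem are proved jointly.

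For (i) I would bound $|\alpha'\Delta|\le \sqrt{n}\,\|\alpha'(\hat\Theta\hat\Sigma-I_p)\|_\infty\,\|\hat\beta-\beta_0\|_1$ by H\"older's inequality. Writing $\alpha'(\hat\Theta\hat\Sigma-I_p)=\sum_{j\in H}\alpha_j(\hat\Theta_j'\hat\Sigma-e_j')$ and invoking the row bound (\ref{2.8}) together with $\sum_{j\in H}|\alpha_j|\le \sqrt{h}\,\|\alpha\|_2=\sqrt h$ gives $\|\alpha'(\hat\Theta\hat\Sigma-I_p)\|_\infty=O_p(\sqrt{h}\,\lambda_{node,n})$, where I also use that $\hat\tau_j^2$ is bounded away from zero with high probability (a consequence of Lemma \ref{Node} and $\tau_j^2=1/\Theta_{j,j}\ge\phi_{\min}(\Sigma)$). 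The estimation-error factor is controlled uniformly over $\mathcal{B}_{\ell_0}(s_0)$ by (\ref{IQ2thm1}) of Theorem \ref{thm1}. Substituting $\lambda_{node,n}\asymp h^{2/r}p^{2/r}/n^{1/2}$ and $\lambda_n\asymp p^{2/r}/n^{1/2}$ produces a bound of order $s_0 h^{1/2+2/r}p^{4/r}/n^{1/2}$, which tends to zero precisely by Assumption 3a).

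For (ii) I would first replace $\hat\Theta$ by $\Theta$, writing $\alpha'\hat\Theta X'u/n^{1/2}=\alpha'\Theta X'u/n^{1/2}+\alpha'(\hat\Theta-\Theta)X'u/n^{1/2}$ and bounding the second summand by $\|\alpha'(\hat\Theta-\Theta)\|_1\,\|X'u\|_\infty/n^{1/2}$. Using (\ref{l1theta}) with $\sum_{j\in H}|\alpha_j|\le\sqrt h$ gives $\|\alpha'(\hat\Theta-\Theta)\|_1=O_p(\sqrt h\,\bar s\, h^{2/r}p^{2/r}/\sqrt n)$, while the ``good event'' from the Lasso analysis yields $\|X'u\|_\infty/n^{1/2}=O_p(p^{2/r})$; the product is dominated by Assumption 3b). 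The surviving term $\alpha'\Theta X'u/n^{1/2}=n^{-1/2}\sum_{i=1}^n\alpha'\Theta X_i u_i$ is a sum of independent, mean-zero (because $E(u_i\mid X_i)=0$), heteroskedastic summands whose total variance is $\alpha'\Theta\Sigma_{xu}\Theta'\alpha$, which is bounded away from zero and from above by Assumption 3e) together with the eigenvalue bounds on $\Theta$. A Lyapunov central limit theorem for triangular arrays then applies; the Lyapunov ratio is checked using the $r$-th moment bounds of Assumption 1, the fact that $a:=\Theta'\alpha$ is sparse with at most $h\bar s$ nonzero entries and $\|a\|_1=O(\sqrt{h\bar s})$, and the rate restriction in Assumption 3d).

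The main work, and the chief obstacle, is (iii): the uniform consistency of the feasible, heteroskedasticity-robust covariance estimator built from the residuals $\hat u_i=Y_i-X_i'\hat\beta$. I would split $\alpha'\hat\Theta\hat\Sigma_{xu}\hat\Theta'\alpha-\alpha'\Theta\Sigma_{xu}\Theta'\alpha$ into a $\hat\Theta$-versus-$\Theta$ part and a $\hat\Sigma_{xu}$-versus-$\Sigma_{xu}$ part. The former is handled by (\ref{l1theta})--(\ref{thetahatnorm}), in particular $\|\hat\Theta'\alpha\|_1=O_p(\sqrt{h\bar s})$, combined with the boundedness of $\phi_{\max}(\Sigma_{xu})$. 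The latter is the delicate step: writing $\hat u_i^2=u_i^2+2u_iX_i'(\beta_0-\hat\beta)+(X_i'(\beta_0-\hat\beta))^2$ decomposes $\hat\Sigma_{xu}-\Sigma_{xu}$ into a purely stochastic term $n^{-1}\sum_i X_iX_i'u_i^2-\Sigma_{xu}$ and two terms driven by the estimation error $\hat\beta-\beta_0$. I would bound each in $\|\cdot\|_\infty$: the stochastic term by a Marcinkiewicz--Zygmund maximal inequality over the $p^2$ entries (this is where $r\ge 6$ is needed, so that the quartic products $X_{ik}X_{il}u_i^2$ possess enough moments), and the estimation-error terms by combining H\"older's inequality, the uniform oracle bound (\ref{IQ2thm1}), and maximal inequalities for averages of products of covariates. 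Multiplying the resulting $\|\hat\Sigma_{xu}-\Sigma_{xu}\|_\infty$ bound by $\|\hat\Theta'\alpha\|_1^2=O_p(h\bar s)$ and verifying that the product is $o_p(1)$ uniformly over $\mathcal{B}_{\ell_0}(s_0)$ is exactly what Assumptions 3b)--d) are calibrated to deliver, with the uniformity inherited from the uniform validity of Theorem \ref{thm1} and Lemma \ref{Node} over the $\ell_0$-ball. Finally, since the infeasible variance is bounded away from zero, dividing the numerator $\alpha'\Theta X'u/n^{1/2}+o_p(1)$ by $\sqrt{\alpha'\hat\Theta\hat\Sigma_{xu}\hat\Theta'\alpha}$ and applying Slutsky's lemma converts the central limit theorem for the infeasibly standardized statistic into (\ref{asymdist}).
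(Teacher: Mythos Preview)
Your proposal is correct and follows essentially the same route as the paper's proof: the same decomposition $\sqrt{n}\alpha'(\hat b-\beta_0)=\alpha'\hat\Theta X'u/n^{1/2}-\alpha'\Delta$, the same bound on $\alpha'\Delta$ via (\ref{2.8}) and Theorem~\ref{thm1} leading to Assumption~3a), the same replacement $\hat\Theta\to\Theta$ in the numerator via (\ref{l1theta}), the same Lyapunov CLT exploiting the sparsity of $\Theta'\alpha$ and Assumption~3d), and the same three-way split of the variance difference (your ``purely stochastic term'' and the two residual-driven terms are exactly the paper's (\ref{t.4}) and (\ref{t.3}), respectively, while your ``$\hat\Theta$-versus-$\Theta$ part'' is (\ref{t.5})). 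The only detail worth flagging is that for the $\hat\Theta$-versus-$\Theta$ quadratic-form difference the paper invokes a bound that also uses the $\ell_2$-estimate (\ref{l2theta}), not just (\ref{l1theta}) and (\ref{thetahatnorm}); but you already cite the full range (\ref{l1theta})--(\ref{thetahatnorm}), so this is a matter of making the sketch explicit rather than a gap.
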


Theorem \ref{thm2} provides sufficient conditions for asymptotically gaussian inference to be valid. We stress again that the number of $\beta_{0,j}$, $h$, involved in the statistic in (\ref{asymdist}) is allowed to increase as the sample size tends to infinity as long as this does not happen too fast. Furthermore, these results can be valid in the presence of more variables than observations ($p>n$).

%play a role in inference for their desparsified versions. Comparison of Lemma 8 with Theorem 1 clearly show that conservative lasso has better prediction and $l_1$ errors in finite samples compared with regular lasso. Then in  the appendix, we show that these properties carry over to nodewise regression counterparts. In the case of inference by using desparsified conservative lasso versus desparsified lasso by looking at rates in (\ref{b.0}), (\ref{l2auxxb}), (\ref{numerator}) and (\ref{thm2aa}), our desparsified conservative lasso based test may have better size. The simulations also verify this. But a rigorous proof for this is not derived since this requires  higher order expansions, and seems beyond the scope of the paper. But to remind the readers, a rigorous comparison between conservative lasso and lasso, and nodewise regression counterparts are provided in the proofs of Lemma \ref{Lasso}, Theorem 1, and Lemma \ref{Node}, equations (\ref{n1})-(\ref{n2}).  

We also emphasize that the above results allow the error terms to be heteroskedastic.  (\ref{asymcov}) provides a uniformly consistent estimator of the asymptotic variance of $n^{1/2} \alpha'(\hat{b} - \beta_{0})$.  The uniformity of  (\ref{asymcov}) will also be used in the proof of Theorem \ref{thm3} below. (\ref{asymcov}) is also interesting as it gives the limit of the variance in the denominator of (\ref{asymdist}) even as the dimension ($p\times p$) of the matrices involved in the expression increases. 

Note that while $d_{n1}$ and $d_{n2}$ do not directly enter in the first order asymptotic result of Theorem \ref{thm2}, equations (\ref{b.0}), (\ref{l2auxxb}), (\ref{numerator}) and (\ref{thm2aa}) in the appendix still reveal that the desparsified conservative Lasso is likely to result in more precise inference than the plain desparsified Lasso. The effect comes directly from more precise parameter estimates as well as through more precise covariance matrix estimation using the nodewise regressions and is clearly seen in the simulations.

In the case where $H$ is a set of fixed cardinality $h$, (\ref{asymdist}) implies that
\begin{align}
\enVert[2]{\del[1]{\hat{\Theta} \hat{\Sigma}_{xu} \hat{\Theta}'}_H^{-1/2}\sqrt{n}\del[0]{\hat{b}_H-\beta_{0,H}}}_2^2\stackrel{d}{\to}\chi^2(h),\label{chi2}
\end{align} 
as it is asymptotically a sum of $h$ independent standard normal random variables. Thus, asymptotically valid $\chi^2$-inference can be performed in order to test a hypothesis on $h$ parameters simultaneously. Wald tests of general  restrictions of the type $H_0: g(\beta_0)=0$ (where $g:\mathbb{R}^p\to\mathbb{R}^h$ is differentiable in an open neighborhood around $\beta_0$ and has derivative matrix of rank $h$) can now also be constructed in the usual manner, see e.g. \cite{davidson00} Chapter 12, even when $p>n$ which has hitherto been impossible.

Consider the leading special case where $H=\cbr[0]{j}$ such that $\alpha$ reduces to the $j$'th unit vector $e_j$ of $\mathbb{R}^p$ and $h=1$.  As a corollary to the previous theorem we consider testing a hypothesis about a single coefficient. The number of regressors is assumed to be a positive multiple of the sample size and the maximal number of non-zero entries in the $j$th row of the inverse population covariance matrix is bounded. This is satisfied when, eg, $\Sigma$ is a Toeplitz matrix. The important thing to notice is that all dimensionality assumptions from Assumptions 1-3 are automatically satisfied in the setting considered in Corollary \ref{cor1.0}.
\begin{corollary}\label{cor1.0}
Let Assumptions 1, 2a, 2c and 3e be satisfied with $p=an,\ a>0$, with $r >16$, $s_0 = O(n^{1/4})$, $\bar{s}=O(1)$. Then,
\begin{align}
\frac{n^{1/2} (\hat{b}_j - \beta_{j0})}{\sqrt{(\hat{\Theta} \hat{\Sigma}_{xu} \hat{\Theta}')_{jj}}}\stackrel{d}{\to}N(0,1),\label{asymdistc1}
\end{align}
Furthermore,
\begin{align}
\sup_{\beta_{0}\in\mathcal{B}_{\ell_0}(s_0)}\envert[1]{(\hat{\Theta} \hat{\Sigma}_{xu} \hat{\Theta}')_{jj} - (\Theta \Sigma_{xu} \Theta')_{jj}}
=
o_p(1).\label{asymcovc1}
\end{align}
\end{corollary}

 If, furthermore, the covariates and the error terms are independent and the latter are homoskedastic with variance $\sigma^2$ we get that
\begin{align*}
\alpha'\Theta \Sigma_{xu} \Theta'\alpha
=
e_j'\Sigma^{-1}\sigma^2\Sigma\Sigma^{-1}e_j
=
\sigma^2(\Sigma^{-1})_{j,j},
\end{align*}
which is nothing else than the standard formula for the asymptotic variance of the least squares estimator of the $j$'th coefficient $\hat{\beta}_{OLS,j}$ in a fixed dimensional linear regression model. Thus, there is no efficiency loss. Corollary \ref{cor1.0} is in the spirit of \cite{robinson1988root} who constructed a $\sqrt{n}$ consistent estimator of the coefficients pertaining to the linear part of a semiparametric model. See also \cite{van2014} Section 2.3.3 for more discussion and relations to the semiparametric framework. In the context of uniformly valid confidence bands for a single parameter the work of \cite{belloni2014inference} is also relevant. These authors consider inference on treatment effects using a post-double-selection procedure.

\subsection{Uniform Convergence}\label{unif}
The next theorem shows that the confidence bands based on the desparsified conservative Lasso are honest and that they contract at the optimal rate. Recall that $\mathcal{B}_{\ell_0}(s_0)=\cbr[1]{\enVert{\beta_{0}}_{\ell_0}\leq s_0}$.
\begin{thm}\label{thm3}
Let Assumptions 1-3 be satisfied and let $\alpha=\alpha_n\in\mathbb{R}^p$ denote any fixed sequence of vectors satisfying $\enVert[0]{\alpha}_2=1$. Then we have
\begin{align}
\sup_{t\in\mathbb{R}}\sup_{\beta_0\in\mathcal{B}_{\ell_0}(s_0)}\envert[4]{P\del[4]{\frac{n^{1/2} \alpha'(\hat{b} - \beta_{0})}{\sqrt{\alpha'\hat{\Theta} \hat{\Sigma}_{xu} \hat{\Theta}'\alpha}}\leq t}-\Phi(t)}\to 0.\label{t3p1}
\end{align}
Furthermore, letting $\hat{\sigma}_j=\sqrt{e_j'\hat{\Theta} \hat{\Sigma}_{xu} \hat{\Theta}'e_j}$ (corresponding to $\alpha=e_j$ in (\ref{t3p1})) and $z_{1-\delta/2}$ the $1-\delta/2$ percentile of the standard normal distribution, one has for all $j=1,...,p$
\begin{align}
\lim_{n\to\infty}\inf_{\beta_0\in\mathcal{B}_{\ell_0}(s_0)}P\del[3]{\beta_{0,j}\in\sbr[2]{\hat{b}_j-z_{1-\delta/2}\frac{\hat{\sigma}_j}{\sqrt{n}},\hat{b}_j+z_{1-\delta/2}\frac{\hat{\sigma}_j}{\sqrt{n}}}}= 1-\delta. \label{t3p2}
\end{align}
Finally, letting $\diam([a,b])=b-a$ be the length of an interval $[a,b]$ in the real line, we have that
\begin{align}
\sup_{\beta_0\in\mathcal{B}_{\ell_0}(s_0)}\diam\del[3]{\sbr[2]{\hat{b}_j-z_{1-\delta/2}\frac{\hat{\sigma}_j}{\sqrt{n}},\hat{b}_j+z_{1-\delta/2}\frac{\hat{\sigma}_j}{\sqrt{n}}}}
=
O_p\del[3]{\frac{1}{\sqrt{n}}}.\label{t3p3}
\end{align}
\end{thm}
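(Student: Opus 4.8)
The plan is to prove (\ref{t3p1}) first and then read off (\ref{t3p2}) and (\ref{t3p3}) as consequences. Write $P_{\beta_0}$ for the law under the model (\ref{0}) with true parameter $\beta_0$, and let $T_n$ denote the studentized statistic on the left of (\ref{t3p1}). The starting point is the decomposition (\ref{2.2})--(\ref{stat}), which I would rearrange as $T_n=G_n+R_n$ with
\begin{align*}
G_n
:=
\frac{\alpha'\Theta X'u/n^{1/2}}{\sqrt{\alpha'\Theta\Sigma_{xu}\Theta'\alpha}},
\end{align*}
and $R_n$ collecting three pieces: the approximation error $-\alpha'\Delta/\sqrt{\alpha'\hat{\Theta}\hat{\Sigma}_{xu}\hat{\Theta}'\alpha}$, the replacement of $\hat{\Theta}$ by $\Theta$ in the numerator, and the replacement of the feasible variance $\alpha'\hat{\Theta}\hat{\Sigma}_{xu}\hat{\Theta}'\alpha$ by the infeasible $\alpha'\Theta\Sigma_{xu}\Theta'\alpha$ in the denominator. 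The crucial observation is that the law of $G_n$ \emph{does not depend on $\beta_0$}: $\Theta$ and $\Sigma_{xu}$ are population quantities fixed by the distribution of $(X_i,u_i)$, and $X'u=\sum_i X_iu_i$ is a function of the primitives whose joint law is held fixed over $\mathcal{B}_{\ell_0}(s_0)$. Hence the entire burden of uniformity over $\beta_0$ is shifted onto $R_n$.

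Next I would show $\sup_{\beta_0\in\mathcal{B}_{\ell_0}(s_0)}P_{\beta_0}(\envert{R_n}>\varepsilon)\to 0$ for each fixed $\varepsilon>0$. Only two terms in $R_n$ depend on $\beta_0$. For the $\Delta$-term, the bound $\envert{\alpha'\Delta}\le n^{1/2}\enVert{\alpha}_1\enVert{\hat{\Theta}\hat{\Sigma}-I_p}_\infty\enVert{\hat{\beta}-\beta_0}_1$ combined with the row bound (\ref{2.8}) and the \emph{uniform} oracle inequality (\ref{IQ2thm1}) of Theorem \ref{thm1} dominates $\alpha'\Delta$ by a deterministic null sequence on a set of probability tending to one uniformly in $\beta_0$; the rate restrictions in Assumption 3 are exactly what force this product to be $o_p(1)$. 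For the denominator, (\ref{asymcov}) of Theorem \ref{thm2} gives uniform consistency of the variance estimator while Assumptions 2a) and 3e) keep $\alpha'\Theta\Sigma_{xu}\Theta'\alpha$ bounded away from zero and above uniformly, so the variance ratio tends to one uniformly. The $(\hat{\Theta}-\Theta)$ term does not depend on $\beta_0$ at all (as $\hat{\Theta}$ is built from $X$ only) and is $o_p(1)$ by the arguments already used to prove Theorem \ref{thm2}. I would then conclude by a uniform asymptotic-equivalence argument: from $\{T_n\le t\}\subseteq\{G_n\le t+\varepsilon\}\cup\{\envert{R_n}>\varepsilon\}$ and the $\beta_0$-invariance of the law of $G_n$,
\begin{align*}
\sup_{\beta_0}P_{\beta_0}\del[1]{T_n\le t}
\le
P\del[1]{G_n\le t+\varepsilon}+\sup_{\beta_0}P_{\beta_0}\del[1]{\envert{R_n}>\varepsilon},
\end{align*}
and the leading-term central limit theorem $P(G_n\le s)\to\Phi(s)$ (already established en route to (\ref{asymdist})) gives $\limsup_n\sup_{\beta_0}P_{\beta_0}(T_n\le t)\le\Phi(t+\varepsilon)$; the symmetric lower bound together with $\varepsilon\downarrow 0$ and continuity of $\Phi$ yields (\ref{t3p1}). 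The main obstacle is the bookkeeping in this first step: one must check that \emph{every} bound borrowed from Theorem \ref{thm1}, Lemma \ref{Node} and Theorem \ref{thm2} holds with constants and exceptional-set probabilities uniform over $\mathcal{B}_{\ell_0}(s_0)$, which is precisely why (\ref{IQ2thm1}) and (\ref{asymcov}) were stated as uniform results.

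For (\ref{t3p2}) I would specialize (\ref{t3p1}) to $\alpha=e_j$, writing $T_{n,j}=n^{1/2}(\hat{b}_j-\beta_{0,j})/\hat{\sigma}_j$, and note that the coverage event equals $\{\envert{T_{n,j}}\le z_{1-\delta/2}\}$. Decomposing the coverage probability as $P_{\beta_0}(T_{n,j}\le z_{1-\delta/2})-P_{\beta_0}(T_{n,j}<-z_{1-\delta/2})$ and applying (\ref{t3p1}) at the two continuity points $\pm z_{1-\delta/2}$ gives, uniformly in $\beta_0$,
\begin{align*}
P_{\beta_0}\del[1]{\envert{T_{n,j}}\le z_{1-\delta/2}}
\ge
\Phi(z_{1-\delta/2})-\Phi(-z_{1-\delta/2})-2a_n
=
(1-\delta)-2a_n,
\end{align*}
where $a_n:=\sup_{\beta_0}\max_{t\in\{\pm z_{1-\delta/2}\}}\envert{P_{\beta_0}(T_{n,j}\le t)-\Phi(t)}\to 0$. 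Taking $\inf_{\beta_0}$ and then $\liminf_n$ delivers (\ref{t3p2}).

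Finally, (\ref{t3p3}) is the most direct: the interval length equals $2z_{1-\delta/2}\hat{\sigma}_j/\sqrt{n}$, so it suffices that $\sup_{\beta_0}\hat{\sigma}_j=O_p(1)$. By (\ref{asymcov}) with $\alpha=e_j$ one has $\sup_{\beta_0}\envert{\hat{\sigma}_j^2-\sigma_j^2}=o_p(1)$ for $\sigma_j^2=e_j'\Theta\Sigma_{xu}\Theta'e_j$, and $\sigma_j^2\le\phi_{\max}(\Theta)^2\phi_{\max}(\Sigma_{xu})=\phi_{\max}(\Sigma_{xu})/\phi_{\min}(\Sigma)^2$ is bounded by Assumptions 2a) and 3e). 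Hence $\sup_{\beta_0}\hat{\sigma}_j^2\le\sigma_j^2+o_p(1)=O_p(1)$, so $\sup_{\beta_0}\diam(\cdots)=2z_{1-\delta/2}\,\sup_{\beta_0}\hat{\sigma}_j/\sqrt{n}=O_p(1/\sqrt{n})$, establishing the optimal contraction rate and completing the proof.
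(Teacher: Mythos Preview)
Your proposal is correct and follows essentially the same route as the paper: isolate the $\beta_0$-free leading term $G_n=\alpha'\Theta X'u/(n^{1/2}\sqrt{\alpha'\Theta\Sigma_{xu}\Theta'\alpha})$, invoke the Lyapounov CLT already established for it, control the remainder uniformly over $\mathcal{B}_{\ell_0}(s_0)$ via (\ref{IQ2thm1}), (\ref{asymcov}) and the numerator bound from Theorem \ref{thm2}, and finish with a sandwich/continuity-of-$\Phi$ argument; parts (\ref{t3p2}) and (\ref{t3p3}) are then read off exactly as you describe. The only cosmetic difference is that you package the remainder additively as $R_n=T_n-G_n$ and use $\{T_n\le t\}\subseteq\{G_n\le t+\varepsilon\}\cup\{|R_n|>\varepsilon\}$, whereas the paper works on explicit high-probability events $A_{1,n},A_{2,n},A_{3,n}$ and treats the variance ratio multiplicatively, obtaining bounds of the form $P(G_n\le t(1+\varepsilon)+2D\varepsilon)$; your additive version implicitly requires $G_n=O_p(1)$ to absorb the denominator perturbation, which is immediate from the CLT and the $\beta_0$-invariance you already noted.
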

(\ref{t3p1}) reveals that convergence to the standard normal distribution is actually valid uniformly over the $\ell_0$-ball of radius at most $s_0$. We stress, however, that (\ref{t3p1}) ceases to be valid if one also takes the supremum over all $\alpha$ sequences satisfying the assumptions of the theorem. Thus, the asymptotics are uniform over $\beta_0$ but pointwise in $\alpha$. 
%uniformity is possible in the light of the work of \cite{leeb2005model} since we refrain from model selection: the desparsified conservative Lasso is, as its name says, not sparse. Hence, our result does not contradict the work of these authors. 
(\ref{t3p2}) is a consequence of (\ref{t3p1}) and entails that the confidence band $\sbr[1]{\hat{b}_j-z_{1-\delta/2}\frac{\hat{\sigma}_j}{\sqrt{n}},\hat{b}_j+z_{1-\delta/2}\frac{\hat{\sigma}_j}{\sqrt{n}}}$ is \textit{asymptotically honest} for $\beta_{0,j}$ over $\mathcal{B}(s_0)$ in the sense of \cite{li1989honest}. 

%Asymptotic honesty is important to produce practically useful confidence sets as it ensures that there is a known time $n$, \textit{not depending on $\beta_0$}, after which the coverage rate of the confidence set is not much smaller than $1-\delta$. Thus, pointwise confidence bands that are \textit{dishonest}, i.e. which do not satisfy (\ref{t3p2}) but
%\begin{align*}
%\inf_{\beta_0\in\mathcal{B}(s_0)}\liminf_{n\to\infty}P\del[3]{\beta_{0,j}\in\sbr[2]{\hat{b}_j-z_{1-\delta/2}\frac{\hat{\sigma}_j}{\sqrt{n}},\hat{b}_j+z_{1-\delta/2}\frac{\hat{\sigma}_j}{\sqrt{n}}}}\geq 1-\delta,
%\end{align*}
%are of much less practical use since the $n$ from which point and onwards the coverage is close to $1-\delta$ is allowed to depend on the unknown $\beta_0$. Of course an honest confidence set $S_n$ could also easily be produced by setting $S_n=\mathbb{R}$ for all $n\geq 1$. Such a confidence set is clearly of little practical use. 
(\ref{t3p3}) is important as it reveals that the confidence band $\sbr[1]{\hat{b}_j-z_{1-\delta/2}\frac{\hat{\sigma}_j}{\sqrt{n}},\hat{b}_j+z_{1-\delta/2}\frac{\hat{\sigma}_j}{\sqrt{n}}}$ has the optimal rate of contraction $1/\sqrt{n}$. Furthermore, these confidence bands are uniformly narrow over $\mathcal{B}_{\ell_0}(s_0)$ such that for all $\epsilon>0$ there exists an $M>0$, not depending on $\beta_0$, with the property that
 \[ \diam\del[3]{\sbr[2]{\hat{b}_j-z_{1-\delta/2}\frac{\hat{\sigma}_j}{\sqrt{n}},\hat{b}_j+z_{1-\delta/2}\frac{\hat{\sigma}_j}{\sqrt{n}}}}\leq M/\sqrt{n},\]
 for all $\beta_0\in\mathcal{B}_{\ell_0}(s_0)$ with probability at least $1-\epsilon$. Here it is vital that at the same time the confidence intervals are asymptotically honest. Since the desparsified conservative Lasso is not a sparse estimator, (\ref{t3p3}) does not contradict inequality 6 in Theorem 2 of \cite{potscher2009confidence} who shows that honest confidence bands based on sparse estimators must be large. 

%Results (\ref{t3p2}) and (\ref{t3p3}) are also remarkable in light of the classical result of
%\cite{bahadur1956nonexistence} stating that even in the problem of constructing confidence intervals for the mean of a gaussian random variable honest confidence bands are not possible in general if one insists on the diameter of the confidence bands to be bounded almost surely. 

Finally, the above results are valid without any sort of $\beta_{\min}$-condition which requires the absolute value of the smallest non-zero coefficient to be greater than $s_0\lambda_n$. In total, Theorem \ref{thm3} reveals that the inference of our procedure is very robust as the confidence bands are honest and contract \textit{uniformly} at the optimal rate.

We provide a brief overview of the proofs here. Lemmata \ref{A1}-\ref{conc} in the appendix are crucial ingredients of our main theorems. Lemma \ref{A1} provides an oracle inequality for general weighted Lasso type estimators subject to a condition on the smallest weight of the truly zero coefficients. Lemmata \ref{MZ} and \ref{conc} are very important to get maximal inequalities for certain sums that determine the order of $\lambda_n$ in our setting of regressors and error terms with bounded $r$th moments. Our use of the Marcinkiewicz-Zygmund inequality provides sharper bounds than Nemirowski's inequality. The technical details are given in the remarks after Lemma \ref{conc}. Thus, Lemma \ref{conc} is a novel contribution in high dimensional statistics. Lemma \ref{lemma9} provides an $\ell_{\infty}$ bound for the estimation error of the Lasso in the case of error terms and regressors with bounded $r$th moments and heteroskedasticity. Furthermore, Lemma \ref{lemma9} provides a theoretical choice of $\lambda_{prec}$. Theorem \ref{thm2} is key in getting a heteroskedasticity consistent estimate of the variance of linear combinations of the parameters involved in the hypothesis being tested and is new in the literature. At the end of Appendix B we also establish strong oracle optimality for the variant of the conservative Lasso and a way to choose $\lambda_n$ for consistent variable selection for this estimator.

\section{Monte Carlo}\label{MC}
In this section we investigate the finite sample performance of the (desparsified) conservative Lasso and compare it to the one of the (desparsified) Lasso of \cite{van2014}. We also implement the procedure of \cite{javanmard2013confidence} using the authors' code\footnote{Available at \url{https://web.stanford.edu/~montanar/sslasso/code.html}.}.  The Lasso as well as the conservative Lasso are implemented in R by means of the publicly available \texttt{glmnet} package.
% and for both of these $\lambda_n$ is chosen by BIC, see e.g. (9.4.9) in \cite{davidson00}. 

To choose $\lambda_n$ as well as $\lambda_{node,n}$, we follow \cite{fantang13} and use their Generalized Information Criterion (GIC). In the regression equation (\ref{0}) 
\[ \lambda_n^* = \argmin_{\lambda_n \in \{ \lambda_l,..., \lambda_u\}} GIC (\lambda_n),\]
where 
\[ GIC (\lambda_n) = \log (\|Y - X \hat{\beta}_{\lambda_n}\|_n^2) + \frac{\log\log(n) \log(p) |S_{\lambda_n}|}{n},\]
and $\lambda_l, \lambda_u$ are lower and upper bounds for $\lambda_n$ while $\hat{\beta}_{\lambda_n}$ is the conservative Lasso estimate pertaining to $\lambda_n$. Finally, $|S_{\lambda_n}|$ denotes the number of non-zero entries in $\hat{\beta}_{\lambda_n}$. The same procedure is used to choose $\lambda_n$ for the variant of the conservative Lasso as well as in the nodewise regressions to choose $\lambda_{node,n}$. At the end of Appendix B we provide a theorem stating that for the variant of the conservative Lasso choosing the tuning parameters by GIC leads to consistent model selection. %We think that feeding a correct specification may help later steps of testing in desparsified lasso. 

%$\lambda_{node, n}$ is also chosen by BIC in the nodewise regressions. 
We compare GIC to choosing the tuning parameters by BIC, see e.g. (9.4.9) in \cite{davidson00}. Of course one could also use cross validation to choose $\lambda_n$ but in our experience this does not improve the quality of the results while being considerably slower. All data will be generated from the model (\ref{0}).

As argued in Section \ref{CLsub} a good choice of $\lambda_{prec}$ should be a high probability bound on $\enVert[0]{\hat{\beta}_L-\beta_0}_{\ell_\infty}$. Lemma \ref{lemma9} in the appendix shows that $\lambda_{prec} = \frac{9 \lambda_n}{4} \|\Theta\|_{\ell_{\infty}}$ is exactly such a bound. Next, Lemma \ref{Node} in the appendix justifies using the plug-in estimate $\enVert[0]{\hat{\Theta}_L}_{\ell_\infty}$ for $\|\Theta\|_{\ell_{\infty}}$ in the choice of $\lambda_{prec}$. However, we find that in practice one might as well use $\lambda_{prec}= \frac{9 \lambda_n}{4}$ which corresponds to $\Theta=I_p$. This choice also has the additional computational advantage of avoiding running all $p$ nodewise regressions. Furthermore, it is the fallback option used in \cite{javanmard2013confidence} in case any of their optimizations needed to get $\hat{\Theta}$ fails. Thus, we shall use  $\lambda_{prec}= \frac{9 \lambda_n}{4}$ which, however, does not come with theoretical performance guarantees.

The following algorithm summarizes how to implement the desparsified conservative Lasso and how to conduct inference with it.

{\bf Algorithm to implement the desparsified conservative Lasso}
\begin{enumerate}
  %\item For each $\lambda_n$ and $\lambda_{prec}= \frac{9}{4} \lambda_n$, and $\lambda_{node,n}= \lambda_n$, repeat steps 2-6 below for each $\lambda_n$ in the grid of $[\lambda_l, \lambda_u]$.
  \item For each $\lambda_n\in \cbr[0]{\lambda_l,...,\lambda_u}$ implement the Lasso $\hat{\beta}_{L}$ by imposing $\hat{w}_j =1$ in (\ref{ConsLassoObj}). $\cbr[0]{\lambda_l,...,\lambda_u}$ is constructed by the \texttt{glmnet} package in R to ensure that models of many sizes are implemented. Use either BIC or GIC to select $\lambda_n \in \cbr[0]{\lambda_l,...,\lambda_u}$.
    \item Construct $\hat{w}_j = \frac{\lambda_{prec}}{|\hat{\beta}_{L,j}| \vee \lambda_{prec}}\ j=1,...,p$ with $\lambda_{prec}=\frac{9}{4}\lambda_n$ and implement the conservative Lasso $\hat{\beta}$ as in (\ref{ConsLassoObj}). Use either BIC or GIC to select $\lambda_n \in \cbr[0]{\lambda_l,...,\lambda_u}$.  %\item Run Conservative Lasso with weights in step 3, as in equation (\ref{ConsLassoObj}) and get $\hat{\beta}$, the conservative lasso estimate.
  
\item For each $j\in H$ construct the $j$th element of the desparsified conservative Lasso by the following steps.
  
a) Run the nodewise Lasso in (\ref{NodeLObj}) with $\lambda_{node}=\lambda_n$ to get $\hat{\gamma}_{L,j}$.

b) Construct the weights for the nodewise conservative Lasso:  
$ \hat{\Gamma}_j = diag \left( \frac{\lambda_{prec}}{|\hat{\gamma}_{L,l}| \vee \lambda_{prec}},\ l=1,...,p, l\neq j  \right)$.  

c) Run the nodewise conservative Lasso as in (\ref{NodeCLObj}) using $\hat{\Gamma}_j$ from step 3b above.

d) Construct $\hat{C}_j$, the $j$th row of $\hat{C}$, as in as in (\ref{Chat}) and obtain $\hat{\tau}_j^2$ as in (\ref{tauhat}). 

e) Let $\hat{\Theta}_j=\hat{C}_j/\hat{\tau}_j^2$ be the $j$th row of $\hat{\Theta}$.

%\item Construct $\hat{C}_j$, the $j$th row of $\hat{C}$, as in as in (\ref{Chat}) and obtain $\hat{\tau}_j^2$ as in (\ref{tauhat}). Set $\hat{\Theta}_j=\hat{C}_j/\hat{\tau}_j^2$. 

f) Construct the $j$th element of the desparsified conservative Lasso (\ref{despClasso}) which is  $\hat{b}_j = \hat{\beta}_j + \hat{\Theta}_j X' (Y- X \hat{\beta})/n$.
%\item Steps 2-6 is for each $\lambda_n$, then run BIC or GIC as in this section to get the $\hat{b}^*$ that minimizes BIC or GIC.
\item $\chi^2$-tests are constructed as in (\ref{chi2}) while the confidence bands are constructed as in (\ref{t3p2}).
\end{enumerate}

The variant of the conservative Lasso goes through steps 1-4 using $\tilde{w}_j$ instead of $\hat{w}_j$ and $\tilde{\Gamma}_j$ instead of $\hat{\Gamma}_j$.

%In addition to this theoretically justified choice we also provide results for following heuristic rule: as the first step Lasso is likely to correctly classify many (yet no all) of the truly zero coefficients as zero even a small positive choice of $\lambda_{prec}$ will ensure that the weights in the second step conservative Lasso are one for the truly zero coefficients. This small choice will guarantee that weights for the truly non-zero coefficients will be small as $\lambda_{prec}$ will be dominated by the first step Lasso estimate. To be concrete, our heuristic rule uses $\lambda_{prec}=\enVert[0]{\hat{\beta}_L}_{\ell_\infty}/100$ and an analog choice in the nodewise regressions. 

All simulations are carried out with 1,000 replications unless stated otherwise and we consider the following performance measures for each of the procedures:
\begin{enumerate}
\item Estimation error: We compute the $\ell_2$-estimation error of the Lasso and the conservative Lasso and its variant averaged over the Monte Carlo replications.
\item Size: We evaluate the size of the $\chi^2$-test in (\ref{chi2}) for a hypothesis involving more than one parameter.
\item Power: We evaluate the power of the $\chi^2$-test in (\ref{chi2}) for a hypothesis involving more than one parameter.
\item Coverage rate: We calculate the coverage rate of a gaussian confidence interval constructed as in (\ref{t3p2}). This is done for a non-zero as well as a zero parameter.
\item Length of confidence interval: We calculate the length of the two confidence intervals considered in point 4, above.
\end{enumerate}

In the simulations we investigate the performance of the conservative Lasso in moderate, high, and very high-dimensional settings. The covariance matrices of the covariates are always chosen to have a Toeplitz structure with $(i,j)$'th entry equal to $\rho^{|i-j|}$ for some $0\leq \rho<1$ to be made precise below. The covariates and the error terms are assumed to be $t$-distributed with 10 degrees of freedom. At this point we remark that all experiments reported below were also carried out with the covariates possessing a block diagonal covariance matrix and/or gaussian error terms (all combinations were tried). This only affected the findings in the simulations marginally and we shall not report these results here.

All tests are carried out at a 5\% significance level and all confidence intervals are at the 95\% level. Unless mentioned otherwise, the $\chi^2$-tests involve the two first parameters in $\beta_0$ of which we deliberately make sure that the first one is 1 and the second one is zero. Thus, $h=2$ in our Experiments 1-3. For measuring the size of the $\chi^2$-test, we test the true hypothesis $H_0:(\beta_{0,1},\beta_{0,2})=(1,0)$. For measuring the power of the $\chi^2$-test, we test the false hypothesis $H_0:(\beta_{0,1},\beta_{0,2})=(1,0.4)$. Thus, the hypothesis is only false on the second entry of $\beta_0$. Similarly, we construct confidence intervals for the first two parameters of $\beta_0$ such that the coverage rate can be compared between non-zero and zero parameters.

As our theory allows for heteroskedastic error terms we also investigate the effect of this. To be precise, we consider error terms of the form $u_i=\epsilon_i\del[1]{\frac{1}{\sqrt{2}}X_{1,i}+b_x X_{2,i}}$ where $\epsilon_i\sim t(10)$ is independent of the covariates and $b_x$ is chosen such that the unconditional variance of $u_i$ is still that of a $t$-distribution with 10 degrees of freedom\footnote{To ensure that $u_i$ still has the variance of $\epsilon_i\sim t(10)$ a small calculation shows that it suffices to choose $b_x=\frac{-\sqrt{2}\rho+\sqrt{2\rho^2+2}}{2}$. Thus, the higher the correlation between $X_{1,i}$ and $X_{2,i}$, the smaller $b_x$ should be chosen.}. Note that this $u_i$ satisfies our assumption $E(u_i|X_i)=0$ and has variance conditional on $X_i$ given by $E(\epsilon_i^2)\del[1]{\frac{1}{\sqrt{2}}X_{1,i}+b_x X_{2,i}}^2$. The reason we ensure that the unconditional variance of $u_i$ is still that of a $t(10)$-distribution is that we do not want any findings to be driven by a plain change in the unconditional variance. It is also deliberate that we choose the conditional heteroskedasticity to depend on $X_{1,i}$ and $X_{2,i}$ as these are the variables involved in the $\chi^2$-tests and the confidence intervals. 

%As a final implementation detail, the tests and confidence intervals for the Lasso are implemented in the same heteroskedstiscity robust version as the conservative Lasso with the appropriate change in the definition of desparsified estimator, the nodewise regressions and $\hat{\Sigma}_{xu} = n^{-1} \sum_{i=1}^n X_i X_i' \hat{u}_i^2$. This is done to make a fair comparison with the conservative Lasso even though the Lasso has not been shown to be robust to heteroskedasticity. However, it certainly is as one can simply mimick our proof of for the conservative Lasso.

\begin{itemize}
\item Experiment 1a (moderate-dimensional setting). $\beta_0$ is $50\times 1$ with 10 ones and 40 zeros. The 10 ones are equidistant in the parameter vector. Thus, $p=50$ and $s_0=10$. We consider $\rho=0,0.5$ and $0.9$ and $n=100$.
\item Experiment 1b (moderate-dimensional setting). As Experiment 1a but with heteroskedastic errors.
\item Experiment 2a (high-dimensional setting). $\beta_0$ is $104\times 1$ with the first four entries being $(1,0,1,0.1)$ and the remaining 100 entries being zero. Thus, $p=104$ and $s_0=3$. We consider $\rho=0,0.5$ and $0.9$ and $n=100$.
\item Experiment 2b (high-dimensional setting). As Experiment 2a but with heteroskedastic errors. 
\item Experiment 3a (very high-dimensional setting). $\beta_0$ is $1000\times 1$ with 10 ones and 990 zeros. The 10 ones are equidistant in the parameter vector. Thus, $p=1000$ and $s_0=10$. $\rho=0.75$. This experiment is carried out for $n=100,150,200,500$ to gauge the effect of an increasing sample size. We also experimented with different values of $\rho$ but this did not qualitatively alter our findings. The number of replications is 100 as the procedure of \cite{javanmard2013confidence} is rather time consuming in high dimensions.
\item Experiment 3b (very high-dimensional setting). As Experiment 3a but with heteroskedastic errors. 
\item Experiment 4: As Experiment 2a with $\rho=0.5$ but testing a hypothesis involving the first ten parameters to investigate the properties of the proposed procedures when many parameters are involved in the hypothesis being tested. When gauging power, the only deviation from the true parameter vector is that the second entry of $\beta_0$ is hypothesized to be 0.4 (as in all other power calculations).
\end{itemize}  

\begin{table}[h]
\centering
\begin{tabular}{ccccccccc}
\toprule
& & & \multicolumn{2}{c}{ $\chi^2$}&\multicolumn{2}{c}{Coverage rate}& \multicolumn{2}{c}{Length}\\
\cmidrule(lr){ 4 - 5 }\cmidrule(lr){ 6 - 7 }\cmidrule(lr){ 8 - 9 }
$n=100$& & $\ell_2$& Size & Power & non-zero & zero & non-zero & zero \\ 
  \midrule
{\multirow{7}{*}{$\rho=0$}}
&Lasso & 0.668 & 0.136 & 0.949 & 0.852 & 0.929 & 0.386 & 0.383 \\ 
  &LassoGIC & 0.721 & 0.116 & 0.936 & 0.873 & 0.940 & 0.411 & 0.404 \\ 
  &CLasso & 0.516 & 0.097 & 0.953 & 0.888 & 0.952 & 0.381 & 0.383 \\ 
  &CLassoGIC & 0.589 & 0.100 & 0.944 & 0.889 & 0.955 & 0.396 & 0.396 \\ 
  &CLassoInd & 0.361 & 0.083 & 0.964 & 0.906 & 0.950 & 0.364 & 0.371 \\ 
  &CLassoIndGIC & 0.371 & 0.077 & 0.962 & 0.913 & 0.951 & 0.369 & 0.375 \\ 
  &J\&M & 0.824 & 0.007 & 0.383 & 0.989 & 0.990 & 0.787 & 0.776 \\ 
   \midrule
{\multirow{7}{*}{$\rho=0.5$}}
&Lasso & 0.709 & 0.146 & 0.900 & 0.852 & 0.918 & 0.394 & 0.409 \\ 
  &LassoGIC & 0.741 & 0.138 & 0.860 & 0.867 & 0.921 & 0.411 & 0.422 \\ 
  &CLasso & 0.491 & 0.093 & 0.917 & 0.888 & 0.954 & 0.397 & 0.417 \\ 
  &CLassoGIC & 0.540 & 0.092 & 0.892 & 0.889 & 0.956 & 0.405 & 0.423 \\ 
  &CLassoInd & 0.392 & 0.086 & 0.941 & 0.897 & 0.953 & 0.387 & 0.415 \\ 
  &CLassoIndGIC & 0.378 & 0.083 & 0.945 & 0.907 & 0.958 & 0.388 & 0.413 \\ 
  &J\&M & 0.867 & 0.012 & 0.300 & 0.993 & 0.991 & 0.896 & 0.992 \\
\midrule
{\multirow{7}{*}{$\rho=0.9$}}
&Lasso & 1.392 & 0.201 & 0.630 & 0.820 & 0.854 & 0.617 & 0.738 \\ 
  &LassoGIC & 1.392 & 0.199 & 0.634 & 0.815 & 0.855 & 0.608 & 0.722 \\ 
  &CLasso & 1.214 & 0.137 & 0.529 & 0.885 & 0.922 & 0.772 & 0.961 \\ 
  &CLassoGIC & 1.224 & 0.132 & 0.524 & 0.887 & 0.927 & 0.769 & 0.947 \\ 
  &CLassoInd & 1.395 & 0.136 & 0.483 & 0.881 & 0.912 & 0.828 & 1.121 \\ 
  &CLassoIndGIC & 1.362 & 0.130 & 0.478 & 0.882 & 0.921 & 0.838 & 1.134 \\ 
  &J\&M & 1.532 & 0.025 & 0.126 & 0.978 & 0.978 & 1.561 & 2.093 \\ 
\bottomrule
\end{tabular}
\caption{\small Summary statistics for Experiment 1a. $\ell_2$: average $\ell_2$-estimation error, $\chi^2$: Size and Power report the size and power of the hypotheses $H_0: (\beta_{0,1}, \beta_{0,2})=(1,0)$ and $H_0: (\beta_{0,1}, \beta_{0,2})=(1,0.4)$, respectively. Coverage rate: the actual coverage rate of the asymptotically gaussian 95\% confidence interval for $\beta_{0,1}$ and $\beta_{0,2}$. Length: the length of the two confidence intervals mentioned above.  Lasso: Lasso with BIC. LassoGIC: Lasso with GIC.
CLasso: Conservative Lasso with BIC.  CLassoGIC: Conservative Lasso with GIC. CLassoInd: Variant of Conservative Lasso with BIC. CLassoIndGIC: Variant of Conservative lasso with GIC. J\&M: Procedure of \cite{javanmard2013confidence}.}
\label{E1a}
\end{table}

\subsection{Results}
%As the procedure of \cite{javanmard2013confidence} in general turned out to work less well than the desparsified Lasso and conservative Lasso we shall mainly focus on the two latter procedures in the sequel. 
Most often, using BIC or GIC to choose $\lambda_n$ is not overly important for our performance measures. However, BIC tends to perform better when $p$ is large compared to $n$ and, unless mentioned otherwise, we will focus on the results for BIC in the sequel. We also note that a general finding is that the conservative Lasso performs better than its variant when $p$ is small compared to $n$ while this ordering reverses when $p$ is large compared to $n$. 

%As the performance of the desparsified conservative Lasso is quite stable across the choice of $\lambda_{prec}$ when it comes to size, power and confidence intervals we shall mainly focus on the results for the theoretically guided choice of $\lambda_{prec}$ \footnote{Regarding the $\ell_2$-estimation error, the conservative Lasso with the heurisitc choice of $\lambda_{prec}$ is in general slightly more precise than its counterpart with a theoretically justified choice of $\lambda_{prec}$. However, inference, not estimation error, is the focus here.}.  

Table \ref{E1a} contains the results for Experiment 1a. First, as predicted in Section \ref{CLsub}, both versions of the conservative Lasso have a lower estimation error than the plain Lasso due to more intelligent weights. The variant of the conservative Lasso fares particularly well for $\rho=0$ and $\rho=0.5$. Furthermore, the conservative Lasso is always less size distorted than the Lasso while having slightly more power except for when $\rho=0.9$. The procedure of \cite{javanmard2013confidence} has even less size distortion but the price is very low power. When $\rho=0.9$ all procedures have serious power deficiencies. Next, our procedure (both versions) always has a coverage rate which is closer to the nominal rate of 95\% than the plain desparsified Lasso. Note, however, that all Lasso-based procedures still have a slight tendency towards undercoverage (a phenomenon which disappears as the sample size is increased (not reported here)). This is the case in particular for the plain Lasso and less pronounced for the conservative Lasso. The reasons for this are that the confidence intervals produced by the Lasso are too narrow compared to the more accurate ones produced by the conservative Lasso and that the latter produces more precise parameter estimates. The confidence intervals of \cite{javanmard2013confidence} have good coverage but are very wide.

\begin{table}[h]
\centering
\begin{tabular}{ccccccccc}
\toprule
& & & \multicolumn{2}{c}{ $\chi^2$}&\multicolumn{2}{c}{Coverage rate}& \multicolumn{2}{c}{Length}\\
\cmidrule(lr){ 4 - 5 }\cmidrule(lr){ 6 - 7 }\cmidrule(lr){ 8 - 9 }
$n=100$& & $\ell_2$& Size & Power & non-zero & zero & non-zero & zero \\ 
  \midrule
{\multirow{7}{*}{$\rho=0$}}
&Lasso & 0.738 & 0.158 & 0.765 & 0.854 & 0.898 & 0.557 & 0.563 \\ 
  &LassoGIC & 0.790 & 0.143 & 0.735 & 0.869 & 0.914 & 0.582 & 0.588 \\ 
  &CLasso & 0.610 & 0.132 & 0.755 & 0.875 & 0.933 & 0.567 & 0.581 \\ 
  &CLassoGIC & 0.685 & 0.130 & 0.734 & 0.875 & 0.932 & 0.578 & 0.591 \\ 
  &CLassoInd & 0.450 & 0.120 & 0.776 & 0.890 & 0.938 & 0.562 & 0.579 \\ 
  &CLassoIndGIC & 0.494 & 0.113 & 0.759 & 0.887 & 0.942 & 0.567 & 0.584 \\ 
  &J\&M & 0.904 & 0.012 & 0.289 & 0.984 & 0.981 & 1.000 & 1.002 \\ 
   \midrule
{\multirow{7}{*}{$\rho=0.5$}}
&Lasso & 0.780 & 0.193 & 0.774 & 0.828 & 0.913 & 0.609 & 0.534 \\ 
  &LassoGIC & 0.815 & 0.183 & 0.737 & 0.835 & 0.925 & 0.630 & 0.554 \\ 
  &CLasso & 0.593 & 0.148 & 0.778 & 0.860 & 0.960 & 0.631 & 0.553 \\ 
  &CLassoGIC & 0.656 & 0.143 & 0.769 & 0.860 & 0.960 & 0.642 & 0.564 \\ 
  &CLassoInd & 0.477 & 0.134 & 0.821 & 0.864 & 0.962 & 0.629 & 0.551 \\ 
  &CLassoIndGIC & 0.485 & 0.130 & 0.813 & 0.868 & 0.968 & 0.638 & 0.557 \\ 
  &J\&M & 0.952 & 0.013 & 0.258 & 0.978 & 0.985 & 1.130 & 1.138 \\ 
\midrule
{\multirow{7}{*}{$\rho=0.9$}}
&Lasso & 1.484 & 0.218 & 0.524 & 0.792 & 0.867 & 0.789 & 0.835 \\ 
  &LassoGIC & 1.482 & 0.225 & 0.523 & 0.790 & 0.870 & 0.784 & 0.823 \\ 
  &CLasso & 1.364 & 0.151 & 0.457 & 0.847 & 0.928 & 0.928 & 1.051 \\ 
  &CLassoGIC & 1.384 & 0.148 & 0.453 & 0.849 & 0.926 & 0.928 & 1.041 \\ 
  &CLassoInd & 1.511 & 0.158 & 0.432 & 0.855 & 0.925 & 0.973 & 1.212 \\ 
  &CLassoIndGIC & 1.483 & 0.151 & 0.428 & 0.860 & 0.932 & 0.987 & 1.228 \\ 
  &J\&M & 1.634 & 0.035 & 0.132 & 0.963 & 0.975 & 1.807 & 2.323 \\ 
\bottomrule
\end{tabular}
\caption{\small Summary statistics for Experiment 1b. $\ell_2$: average $\ell_2$-estimation error, $\chi^2$: Size and Power report the size and power of the hypotheses $H_0: (\beta_{0,1}, \beta_{0,2})=(1,0)$ and $H_0: (\beta_{0,1}, \beta_{0,2})=(1,0.4)$, respectively. Coverage rate: the actual coverage rate of the asymptotically gaussian 95\% confidence interval for $\beta_{0,1}$ and $\beta_{0,2}$. Length: the length of the two confidence intervals mentioned above. Lasso: Lasso with BIC. LassoGIC: Lasso with GIC.
CLasso: Conservative Lasso with BIC.  CLassoGIC: Conservative Lasso with GIC. CLassoInd: Variant of Conservative Lasso with BIC. CLassoIndGIC: Variant of Conservative lasso with GIC. J\&M: Procedure of \cite{javanmard2013confidence}.}
\label{E1b}
\end{table}

Next, Table \ref{E1b} adds heteroskedasticity to the results of Experiment 1a. The main message of this table is that qualitatively the results of Experiment 1a remain unchanged as all procedures only suffer slightly from the introduction of heteroskedasticity in the error terms.

\begin{table}[h]
\centering
\begin{tabular}{ccccccccc}
\toprule
& & & \multicolumn{2}{c}{ $\chi^2$}&\multicolumn{2}{c}{Coverage rate}& \multicolumn{2}{c}{Length}\\
\cmidrule(lr){ 4 - 5 }\cmidrule(lr){ 6 - 7 }\cmidrule(lr){ 8 - 9 }
$n=100$& & $\ell_2$& Size & Power & non-zero & zero & non-zero & zero \\ 
  \midrule
{\multirow{7}{*}{$\rho=0$}}
&Lasso & 0.398 & 0.058 & 0.901 & 0.946 & 0.931 & 0.435 & 0.412 \\ 
  &LassoGIC & 0.425 & 0.051 & 0.902 & 0.959 & 0.933 & 0.444 & 0.414 \\ 
  &CLasso & 0.375 & 0.061 & 0.905 & 0.949 & 0.930 & 0.428 & 0.408 \\ 
  &CLassoGIC & 0.413 & 0.057 & 0.901 & 0.958 & 0.934 & 0.439 & 0.413 \\ 
  &CLassoInd & 0.315 & 0.076 & 0.906 & 0.929 & 0.925 & 0.486 & 0.467 \\ 
  &CLassoIndGIC & 0.368 & 0.070 & 0.911 & 0.941 & 0.934 & 0.422 & 0.406 \\ 
  &J\&M & 0.348 & 0.135 & 0.973 & 0.862 & 0.955 & 0.373 & 0.360 \\  
   \midrule
{\multirow{7}{*}{$\rho=0.5$}}
&Lasso & 0.337 & 0.162 & 0.687 & 0.928 & 0.823 & 0.439 & 0.436 \\ 
  &LassoGIC & 0.354 & 0.189 & 0.613 & 0.937 & 0.790 & 0.451 & 0.442 \\ 
  &CLasso & 0.315 & 0.142 & 0.720 & 0.924 & 0.846 & 0.435 & 0.437 \\ 
  &CLassoGIC & 0.343 & 0.173 & 0.650 & 0.930 & 0.813 & 0.448 & 0.441 \\ 
  &CLassoInd & 0.282 & 0.096 & 0.849 & 0.911 & 0.916 & 0.419 & 0.431 \\ 
  &CLassoIndGIC & 0.334 & 0.131 & 0.774 & 0.919 & 0.881 & 0.432 & 0.434 \\ 
  &J\&M & 0.310 & 0.429 & 0.919 & 0.787 & 0.767 & 0.316 & 0.301 \\ 
\midrule
{\multirow{4}{*}{$\rho=0.9$}}
&Lasso & 0.451 & 0.237 & 0.407 & 0.841 & 0.796 & 0.642 & 0.748 \\ 
  &LassoGIC & 0.456 & 0.275 & 0.381 & 0.844 & 0.768 & 0.637 & 0.728 \\ 
  &CLasso & 0.513 & 0.163 & 0.458 & 0.878 & 0.900 & 0.784 & 0.942 \\ 
  &CLassoGIC & 0.527 & 0.175 & 0.428 & 0.873 & 0.895 & 0.779 & 0.915 \\ 
  &CLassoInd & 0.556 & 0.076 & 0.386 & 0.926 & 0.935 & 0.916 & 1.228 \\ 
  &CLassoIndGIC & 0.647 & 0.071 & 0.359 & 0.932 & 0.934 & 0.944 & 1.251 \\ 
  &J\&M & 0.440 & 0.652 & 0.908 & 0.491 & 0.597 & 0.292 & 0.302 \\
\bottomrule
\end{tabular}
\caption{\small Summary statistics for Experiment 2a. $\ell_2$: average $\ell_2$-estimation error, $\chi^2$: Size and Power report the size and power of the hypotheses $H_0: (\beta_{0,1}, \beta_{0,2})=(1,0)$ and $H_0: (\beta_{0,1}, \beta_{0,2})=(1,0.4)$, respectively. Coverage rate: the actual coverage rate of the asymptotically gaussian 95\% confidence interval for $\beta_{0,1}$ and $\beta_{0,2}$. Length: the length of the two confidence intervals mentioned above. Lasso: Lasso with BIC. LassoGIC: Lasso with GIC.
CLasso: Conservative Lasso with BIC.  CLassoGIC: Conservative Lasso with GIC. CLassoInd: Variant of Conservative Lasso with BIC. CLassoIndGIC: Variant of Conservative lasso with GIC.J\&M: Procedure of \cite{javanmard2013confidence}.}
\label{E2a}
\end{table}

%Table \ref{E2a} contains the results for Experiment 2a) in which the number of variables is slightly larger than the sample size. For $\rho=0$, the estimation error of the conservative Lasso is almost twice as low as the one for the plain Lasso underscoring the prediction in Section \ref{CLsub}. However, in this case this does not result in a better performance along the other dimensions measured as the two procedures perform similarly there: they both have good size and power properties and the coverage rate is close to the nominal one.

Table \ref{E2a} contains the results for Experiment 2a) in which the number of variables is slightly larger than the sample size. For $\rho=0.5$ both versions of the conservative Lasso are more precise than the Lasso, have less size distortion and higher power. This is the case in particular for the variant of the conservative Lasso with indicator function weights. The coverage probability for the zero parameter is also higher. The procedure of \cite{javanmard2013confidence} is rather size distorted. When $\rho=0.9$ the power of the $\chi^2$-test decreases for all Lasso based procedures. The procedure of \cite{javanmard2013confidence} suffers from severe size distortion. The conservative Lasso has a much better coverage rate, sometimes being more than ten percentage points larger for the zero parameter than the competitors. This comes from more precise parameter estimates and wider bands. 

\begin{table}[h]
\centering
\begin{tabular}{ccccccccc}
\toprule
& & & \multicolumn{2}{c}{ $\chi^2$}&\multicolumn{2}{c}{Coverage rate}& \multicolumn{2}{c}{Length}\\
\cmidrule(lr){ 4 - 5 }\cmidrule(lr){ 6 - 7 }\cmidrule(lr){ 8 - 9 }
$n=100$& & $\ell_2$& Size & Power & non-zero & zero & non-zero & zero \\ 
  \midrule
{\multirow{7}{*}{$\rho=0$}}
&Lasso & 0.445 & 0.082 & 0.714 & 0.923 & 0.945 & 0.631 & 0.634 \\ 
  &LassoGIC & 0.472 & 0.070 & 0.701 & 0.932 & 0.950 & 0.642 & 0.641 \\ 
  &CLasso & 0.430 & 0.088 & 0.715 & 0.920 & 0.950 & 0.626 & 0.634 \\ 
  &CLassoGIC & 0.465 & 0.075 & 0.704 & 0.929 & 0.950 & 0.639 & 0.642 \\ 
  &CLassoInd & 0.396 & 0.085 & 0.713 & 0.914 & 0.946 & 0.696 & 0.702 \\ 
  &CLassoIndGIC & 0.445 & 0.083 & 0.712 & 0.917 & 0.950 & 0.624 & 0.639 \\ 
  &J\&M & 0.395 & 0.136 & 0.771 & 0.848 & 0.954 & 0.567 & 0.573 \\ 
   \midrule
{\multirow{7}{*}{$\rho=0.5$}}
&Lasso & 0.391 & 0.184 & 0.545 & 0.918 & 0.875 & 0.698 & 0.587 \\ 
  &LassoGIC & 0.406 & 0.202 & 0.501 & 0.922 & 0.861 & 0.715 & 0.599 \\ 
  &CLasso & 0.381 & 0.167 & 0.587 & 0.906 & 0.898 & 0.695 & 0.589 \\ 
  &CLassoGIC & 0.403 & 0.186 & 0.528 & 0.912 & 0.877 & 0.711 & 0.600 \\ 
  &CLassoInd & 0.392 & 0.150 & 0.658 & 0.888 & 0.940 & 0.681 & 0.588 \\ 
  &CLassoIndGIC & 0.425 & 0.170 & 0.607 & 0.887 & 0.927 & 0.696 & 0.596 \\ 
  &J\&M & 0.370 & 0.504 & 0.787 & 0.804 & 0.840 & 0.565 & 0.480 \\ 
\midrule
{\multirow{7}{*}{$\rho=0.9$}}
&Lasso & 0.512 & 0.220 & 0.315 & 0.879 & 0.804 & 0.870 & 0.862 \\ 
  &LassoGIC & 0.514 & 0.245 & 0.301 & 0.877 & 0.777 & 0.869 & 0.846 \\ 
  &CLasso & 0.586 & 0.143 & 0.343 & 0.885 & 0.914 & 0.979 & 1.034 \\ 
  &CLassoGIC & 0.597 & 0.148 & 0.317 & 0.882 & 0.896 & 0.978 & 1.011 \\ 
  &CLassoInd & 0.698 & 0.083 & 0.316 & 0.934 & 0.953 & 1.104 & 1.324 \\ 
  &CLassoIndGIC & 0.765 & 0.081 & 0.304 & 0.936 & 0.957 & 1.132 & 1.349 \\ 
  &J\&M & 0.500 & 0.674 & 0.824 & 0.633 & 0.636 & 0.527 & 0.483 \\ 
\bottomrule
\end{tabular}
\caption{\small Summary statistics for Experiment 2b. $\ell_2$: average $\ell_2$-estimation error, $\chi^2$: Size and Power report the size and power of the hypotheses $H_0: (\beta_{0,1}, \beta_{0,2})=(1,0)$ and $H_0: (\beta_{0,1}, \beta_{0,2})=(1,0.4)$, respectively. Coverage rate: the actual coverage rate of the asymptotically gaussian 95\% confidence interval for $\beta_{0,1}$ and $\beta_{0,2}$. Length: the length of the two confidence intervals mentioned above.  Lasso: Lasso with BIC. LassoGIC: Lasso with GIC.
CLasso: Conservative Lasso with BIC.  CLassoGIC: Conservative Lasso with GIC. CLassoInd: Variant of Conservative Lasso with BIC. CLassoIndGIC: Variant of Conservative lasso with GIC. J\&M: Procedure of \cite{javanmard2013confidence}.}
\label{E2b}
\end{table}

When adding heteroskedasticity to Experiment 2a, Table \ref{E2b} shows that the estimation errors of all procedures increase slightly. The coverage rate of all procedures is roughly unchanged but the bands become wider. 

\begin{table}[!h]
\centering
\begin{tabular}{ccccccccc}
\toprule
& & & \multicolumn{2}{c}{ $\chi^2$}&\multicolumn{2}{c}{Coverage rate}& \multicolumn{2}{c}{Length}\\
\cmidrule(lr){ 4 - 5 }\cmidrule(lr){ 6 - 7 }\cmidrule(lr){ 8 - 9 }
$\rho=0.75$& & $\ell_2$& Size & Power & non-zero & zero & non-zero & zero \\ 
  \midrule
{\multirow{7}{*}{$n=100$}}
&Lasso & 1.551 & 0.760 & 0.880 & 0.250 & 0.730 & 0.232 & 0.229 \\ 
  &LassoGIC & 3.066 & 0.060 & 0.040 & 0.960 & 0.860 & 1.541 & 1.580 \\ 
  &CLasso & 1.006 & 0.220 & 0.780 & 0.830 & 0.910 & 0.479 & 0.494 \\ 
  &CLassoGIC & 3.066 & 0.060 & 0.040 & 0.960 & 0.850 & 1.551 & 1.588 \\ 
  &CLassoInd & 1.419 & 0.370 & 0.750 & 0.590 & 0.870 & 1.646 & 1.049 \\ 
  &CLassoIndGIC & 3.066 & 0.070 & 0.060 & 0.970 & 0.860 & 1.631 & 1.653 \\ 
  &J\&M & 1.514 & 0.930 & 0.980 & 0.220 & 0.810 & 0.247 & 0.242 \\
   \midrule
{\multirow{7}{*}{$n=150$}}
&Lasso & 1.099 & 0.320 & 0.780 & 0.670 & 0.800 & 0.336 & 0.361 \\ 
  &LassoGIC & 1.400 & 0.090 & 0.340 & 0.960 & 0.840 & 0.579 & 0.616 \\ 
  &CLasso & 0.798 & 0.050 & 0.770 & 0.960 & 0.880 & 0.416 & 0.454 \\ 
  &CLassoGIC & 1.418 & 0.080 & 0.320 & 0.960 & 0.840 & 0.595 & 0.632 \\ 
  &CLassoInd & 0.875 & 0.270 & 0.820 & 0.710 & 0.910 & 0.537 & 0.433 \\ 
  &CLassoIndGIC & 1.432 & 0.090 & 0.410 & 0.960 & 0.880 & 0.669 & 0.720 \\ 
  &J\&M & 0.937 & 0.830 & 0.990 & 0.400 & 0.740 & 0.204 & 0.205 \\ 
\midrule
{\multirow{7}{*}{$n=200$}}
&Lasso & 0.876 & 0.060 & 0.860 & 0.880 & 0.930 & 0.394 & 0.436 \\ 
  &LassoGIC & 1.036 & 0.070 & 0.710 & 0.930 & 0.930 & 0.450 & 0.489 \\ 
  &CLasso & 0.694 & 0.040 & 0.910 & 0.950 & 0.930 & 0.391 & 0.437 \\ 
  &CLassoGIC & 1.002 & 0.060 & 0.750 & 0.930 & 0.930 & 0.458 & 0.497 \\ 
  &CLassoInd & 0.397 & 0.080 & 0.910 & 0.910 & 0.920 & 0.439 & 0.507 \\ 
  &CLassoIndGIC & 0.864 & 0.100 & 0.740 & 0.900 & 0.870 & 0.496 & 0.556 \\ 
  &J\&M & 0.746 & 0.490 & 1.000 & 0.530 & 0.890 & 0.204 & 0.209 \\ 
\midrule
{\multirow{7}{*}{$n=500$}}
&Lasso & 0.494 & 0.080 & 1.000 & 0.930 & 0.960 & 0.246 & 0.282 \\ 
  &LassoGIC & 0.552 & 0.070 & 1.000 & 0.940 & 0.950 & 0.254 & 0.289 \\ 
  &CLasso & 0.254 & 0.060 & 1.000 & 0.920 & 0.970 & 0.250 & 0.295 \\ 
  &CLassoGIC & 0.307 & 0.050 & 1.000 & 0.930 & 0.970 & 0.252 & 0.295 \\ 
  &CLassoInd & 0.139 & 0.080 & 1.000 & 0.930 & 0.970 & 0.263 & 0.329 \\ 
  &CLassoIndGIC & 0.139 & 0.080 & 1.000 & 0.930 & 0.970 & 0.263 & 0.329 \\ 
  &J\&M & 0.420 & 0.150 & 1.000 & 0.770 & 0.930 & 0.193 & 0.217 \\
\bottomrule
\end{tabular}
\caption{\footnotesize Summary statistics for Experiment 3a. $\ell_2$: average $\ell_2$-estimation error, $\chi^2$: Size and Power report the size and power of the hypotheses $H_0: (\beta_{0,1}, \beta_{0,2})=(1,0)$ and $H_0: (\beta_{0,1}, \beta_{0,2})=(1,0.4)$, respectively. Coverage rate: the actual coverage rate of the asymptotically gaussian 95\% confidence interval for $\beta_{0,1}$ and $\beta_{0,2}$. Length: the length of the two confidence intervals mentioned above.  Lasso: Lasso with BIC. LassoGIC: Lasso with GIC.
CLasso: Conservative Lasso with BIC.  CLassoGIC: Conservative Lasso with GIC. CLassoInd: Variant of Conservative Lasso with BIC. CLassoIndGIC: Variant of Conservative lasso with GIC. J\&M: Procedure of \cite{javanmard2013confidence}.}
\label{E3a}
\end{table}

The results for the very high-dimensional Experiment 3a are found in Table \ref{E3a}. Here GIC performs quite badly (for low values of $n$) for all methods and we thus focus on the results for BIC. When the sample size is $n=100$, the plain Lasso has an estimation error which is 50\% larger than the one of the conservative Lasso. Furthermore, the $\chi^2$-test based on the Lasso is so size distorted (the size is 76\%) that its usefulness may be questioned. While the conservative Lasso also suffers from size distortion (the size is 22\%) it is still \textit{much} more reliable than the Lasso. The version of the conservative Lasso lies in between in terms of estimation error and size of the $\chi^2$-test. The procedure of \cite{javanmard2013confidence} is severely size distorted when $n=100$ but this gradually improves as the sample size is increased. 

Turning to the coverage rates of the confidence intervals of the non-zero coefficients, the Lasso provides such a poor coverage (25 \%) that it may almost be deemed useless. The conservative Lasso, while not being perfect, still has a coverage of 83\%. It also performs much better for the truly zero parameter than the Lasso. The superior coverage of conservative Lasso is due to much more precise parameter estimates and wider confidence bands than the Lasso. The coverage of the version of the conservative Lasso is higher than for the Lasso but lower than for the conservative Lasso.

When the sample size is increased to just $n=150$ the conservative Lasso performs well along all dimensions even in this high-dimensional setting. The size distortion has disappeared and the coverage for the non-zero parameter has increased to 96\% (from 83\%). The Lasso has also improved. However, it is remarkable that the size of its $\chi^2$-test for $n=150$ is still higher than the one for the conservative Lasso when $n=100$. Similarly, the coverage rate of the confidence bands for the zero as well as the non-zero parameters based on the Lasso is still lower than the one the conservative Lasso produced for $n=100$. 

%It is also remarkable that for both procedures the length of the confidence bands has actually become wider as $n$ is increased from 100 to 150. This indicates that the undercoverage for $n=100$ is to a high extent due to too narrow confidence bands as a result of under estimating the variance of the parameters. 
 
Next, for $n=200$, the conservative Lasso still estimates the parameters much more precisely than the plain Lasso. It also has better size and power properties but the gap has narrowed as these quantities approach their asymptotic values of $0.05$ and 1, respectively. Regarding the coverage rate, the conservative Lasso also remains the superior procedure. The variant of the conservative Lasso now actually delivers the lowest estimation error which is in accordance with our initial observation of the variant performing relatively well as $p/n$ decreases. 

Finally, for $n=500$, both procedures work very well, but the conservative Lasso remains by far the most precise estimator in terms of $\ell_2$-estimation error (three times as precise as the plain Lasso). The size distortion of the procedure of \cite{javanmard2013confidence} is now only moderate while its confidence bands still undercover the non-zero coefficient.

\begin{table}[!h]
\centering
\begin{tabular}{ccccccccc}
\toprule
& & & \multicolumn{2}{c}{ $\chi^2$}&\multicolumn{2}{c}{Coverage rate}& \multicolumn{2}{c}{Length}\\
\cmidrule(lr){ 4 - 5 }\cmidrule(lr){ 6 - 7 }\cmidrule(lr){ 8 - 9 }
$\rho=0.75$& & $\ell_2$& Size & Power & non-zero & zero & non-zero & zero \\ 
  \midrule
{\multirow{7}{*}{$n=100$}}
&Lasso & 1.667 & 0.680 & 0.880 & 0.300 & 0.870 & 0.297 & 0.271 \\ 
  &LassoGIC & 3.074 & 0.080 & 0.050 & 0.950 & 0.860 & 1.664 & 1.616 \\ 
  &CLasso & 1.225 & 0.370 & 0.790 & 0.640 & 0.920 & 0.557 & 0.512 \\ 
  &CLassoGIC & 3.074 & 0.080 & 0.050 & 0.950 & 0.860 & 1.673 & 1.623 \\ 
  &CLassoInd & 1.578 & 0.380 & 0.730 & 0.600 & 0.890 & 1.814 & 1.120 \\ 
  &CLassoIndGIC & 3.092 & 0.090 & 0.060 & 0.950 & 0.870 & 1.765 & 1.711 \\ 
  &J\&M & 1.610 & 0.860 & 0.980 & 0.330 & 0.840 & 0.360 & 0.330 \\
   \midrule
{\multirow{7}{*}{$n=150$}}
&Lasso & 1.206 & 0.370 & 0.710 & 0.690 & 0.900 & 0.465 & 0.424 \\ 
  &LassoGIC & 1.693 & 0.120 & 0.290 & 0.950 & 0.930 & 0.841 & 0.800 \\ 
  &CLasso & 0.906 & 0.100 & 0.690 & 0.910 & 0.960 & 0.592 & 0.550 \\ 
  &CLassoGIC & 1.703 & 0.110 & 0.280 & 0.950 & 0.930 & 0.850 & 0.812 \\ 
  &CLassoInd & 1.070 & 0.370 & 0.800 & 0.640 & 0.910 & 0.527 & 0.478 \\ 
  &CLassoIndGIC & 1.708 & 0.090 & 0.360 & 0.900 & 0.940 & 0.910 & 0.910 \\ 
  &J\&M & 1.040 & 0.810 & 0.980 & 0.480 & 0.910 & 0.352 & 0.325 \\ 
\midrule
{\multirow{7}{*}{$n=200$}}
&Lasso & 0.978 & 0.150 & 0.680 & 0.850 & 0.930 & 0.548 & 0.517 \\ 
  &LassoGIC & 1.170 & 0.120 & 0.520 & 0.880 & 0.920 & 0.622 & 0.587 \\ 
  &CLasso & 0.856 & 0.110 & 0.730 & 0.850 & 0.950 & 0.561 & 0.531 \\ 
  &CLassoGIC & 1.150 & 0.100 & 0.520 & 0.900 & 0.930 & 0.632 & 0.598 \\ 
  &CLassoInd & 0.628 & 0.120 & 0.750 & 0.860 & 0.970 & 0.586 & 0.590 \\ 
  &CLassoIndGIC & 1.067 & 0.090 & 0.600 & 0.890 & 0.960 & 0.667 & 0.671 \\ 
  &J\&M & 0.842 & 0.610 & 0.990 & 0.550 & 0.910 & 0.340 & 0.304 \\  
\midrule
{\multirow{7}{*}{$n=500$}}
&Lasso & 0.548 & 0.100 & 0.980 & 0.880 & 0.940 & 0.380 & 0.332 \\ 
  &LassoGIC & 0.610 & 0.100 & 0.970 & 0.890 & 0.950 & 0.389 & 0.341 \\ 
  &CLasso & 0.316 & 0.100 & 1.000 & 0.890 & 0.950 & 0.381 & 0.341 \\ 
  &CLassoGIC & 0.378 & 0.100 & 1.000 & 0.890 & 0.940 & 0.384 & 0.343 \\ 
  &CLassoInd & 0.177 & 0.100 & 0.990 & 0.910 & 0.950 & 0.387 & 0.367 \\ 
  &CLassoIndGIC & 0.182 & 0.100 & 0.990 & 0.910 & 0.950 & 0.387 & 0.367 \\ 
  &J\&M & 0.473 & 0.190 & 1.000 & 0.780 & 0.930 & 0.327 & 0.272 \\ 
\bottomrule
\end{tabular}
\caption{\footnotesize Summary statistics for Experiment 3b. $\ell_2$: average $\ell_2$-estimation error, $\chi^2$: Size and Power report the size and power of the hypotheses $H_0: (\beta_{0,1}, \beta_{0,2})=(1,0)$ and $H_0: (\beta_{0,1}, \beta_{0,2})=(1,0.4)$, respectively. Coverage rate: the actual coverage rate of the asymptotically gaussian 95\% confidence interval for $\beta_{0,1}$ and $\beta_{0,2}$. Length: the length of the two confidence intervals mentioned above. Lasso: Lasso with BIC. LassoGIC: Lasso with GIC.
CLasso: Conservative Lasso with BIC.  CLassoGIC: Conservative Lasso with GIC. CLassoInd: Variant of Conservative Lasso with BIC. CLassoIndGIC: Variant of Conservative lasso with GIC. J\&M: Procedure of \cite{javanmard2013confidence}.}
\label{E3b}
\end{table}

\begin{table}[!h]
\centering
\begin{tabular}{ccccccccc}
\toprule
& & & \multicolumn{2}{c}{ $\chi^2$}&\multicolumn{2}{c}{Coverage rate}& \multicolumn{2}{c}{Length}\\
\cmidrule(lr){ 4 - 5 }\cmidrule(lr){ 6 - 7 }\cmidrule(lr){ 8 - 9 }
$\rho=0.5$& & $\ell_2$& Size & Power & non-zero & zero & non-zero & zero \\ 
  \midrule
{\multirow{7}{*}{$n=100$}}
  &Lasso & 0.337 & 0.174 & 0.640 & 0.928 & 0.823 & 0.439 & 0.436 \\ 
  &LassoGIC & 0.354 & 0.187 & 0.600 & 0.937 & 0.790 & 0.451 & 0.442 \\ 
  &CLasso & 0.315 & 0.160 & 0.678 & 0.924 & 0.846 & 0.435 & 0.437 \\ 
  &CLassoGIC & 0.343 & 0.181 & 0.629 & 0.930 & 0.813 & 0.448 & 0.441 \\ 
  &CLassoInd & 0.282 & 0.161 & 0.807 & 0.911 & 0.916 & 0.419 & 0.431 \\ 
  &CLassoIndGIC & 0.334 & 0.200 & 0.766 & 0.919 & 0.881 & 0.432 & 0.434 \\ 
  &J\&M & 0.310 & 0.597 & 0.930 & 0.787 & 0.767 & 0.316 & 0.301 \\
\bottomrule
\end{tabular}
\caption{\small Summary statistics for Experiment 4. $\ell_2$: average $\ell_2$-estimation error, $\chi^2$: Size and Power report the size and power of the hypotheses $H_0: (\beta_{0,1}, \beta_{0,2})=(1,0,1,0.1,0,0,0,0,0,0)$ and $H_0: (\beta_{0,1}, \beta_{0,2})=(1,0.4,1,0.1,0,0,0,0,0,0)$, respectively. Coverage rate: the actual coverage rate of the asymptotically gaussian 95\% confidence interval for $\beta_{0,1}$ and $\beta_{0,2}$. Length: the length of the two confidence intervals mentioned above. Lasso: Lasso with BIC. LassoGIC: Lasso with GIC.
CLasso: Conservative Lasso with BIC.  CLassoGIC: Conservative Lasso with GIC. CLassoInd: Variant of Conservative Lasso with BIC. CLassoIndGIC: Variant of Conservative lasso with GIC. J\&M: Procedure of \cite{javanmard2013confidence}.}
\label{E4a}
\end{table}

Table \ref{E3b} adds heteroskedasticity to the results in Table \ref{E3a}. Qualitatively nothing changes in the sense that the rankings between the Lasso and the conservative Lasso remain the same in terms of estimation precision, size, power and coverage for all sample sizes. The conservative Lasso again estimates the parameters more precisely and has much better size and coverage properties. For $n=500$ both procedures work well but as usual the conservative Lasso remains the most precise estimator in terms of $\ell_2$-estimation error.

Table \ref{E4a} considers the effect of testing a hypothesis involving many parameters. The results should be compared to those of Table \ref{E2a}. The main message is that the size of the Lasso based tests only inflates slightly compared to the case where only two parameters were involved in the hypothesis. Among the Lasso based tests the inflation is largest for the variant of the conservative Lasso.  The size of \cite{javanmard2013confidence} increases by much more. Furthermore, the conservative Lasso is still found to slightly outperform the plain Lasso in terms of size and power.

\section{Conclusion}\label{Co}
This paper shows how the conservative Lasso can be used to conduct inference in the high-dimensional linear regression model. We allow for conditional heteroskedasticity in the error terms and also show how to consistently estimate the population covariance matrix in this case. In fact, the convergence is uniform over sparse sub vectors of the parameter space. Next, we show that the confidence bands based on the desparsified conservative are honest and that they contract at the optimal rate. This rate of contraction is also uniform over sparse sub vectors of the parameter space. $\chi^2$-inference is also briefly discussed. Our simulations show that the conservative Lasso provides much more precise parameter estimates than the plain Lasso and that tests based on it have superior size properties. Furthermore, confidence intervals based on the desparsified conservative Lasso have better coverage rates than the ones based on the desparsified plain Lasso. Future work may include bootstrapping the desparsified conservative Lasso to gain further finite sample improvements. %A theoretical comparison to the post-selection type estimator of \cite{belloni2013uniform} is also of interest. 

\setcounter{equation}{0}\setcounter{lemma}{0}\renewcommand{\theequation}{A.%
\arabic{equation}}\renewcommand{\thelemma}{A.\arabic{lemma}}%
\renewcommand{\baselinestretch}{1}\baselineskip=15pt

\section*{Appendix}

In Appendix A we begin by providing some auxiliary lemmas used for the proofs of the main results in Appendix B. 
The details of (\ref{2.8}) can be found in Appendix C.

\subsection*{Appendix A -- auxiliary lemmas}

First, we provide the proof of Lemma \ref{lt1} in the main text.

\begin{proof}[Proof of Lemma \ref{lt1}]
(i). Note that by (\ref{thetaj}) with $H=\cbr[0]{1,...,p}$ it follows under Assumptions 1 and 2 that
\begin{equation}
\| \Theta \|_{\ell_{\infty}} = \max_{1 \le j \le p} \| \Theta_j \|_1 = O (\sqrt{\max_{1 \le j \le p} s_j}),\label{01}
\end{equation}
It actually also follows from (\ref{sbar}) that (since $\hat{\Theta}_L$ is a subcase of $\hat{\Theta}$)
\begin{equation}
\| \hat{\Theta}_L \|_{\ell_{\infty}} = \max_{1 \le j \le p} \| \hat{\Theta}_{L,j} \|_1 = O_p (\sqrt{\max_{1 \le j \le p} s_j}),\label{02}
\end{equation}
Thus,
\begin{equation}
\lambda_{prec} = O (\lambda_n \sqrt{\max_{1 \le j \le p} s_j})\label{03}
\end{equation}
where
\begin{equation}
\lambda_n \sqrt{\max_{1 \le j \le p} s_j} = \frac{M p^{2/r}}{\sqrt{n}} \sqrt{\max_{1 \le j \le p} s_j} = M \left[ \frac{p^2 (\max_{1 \le j \le p} 
s_j)^{r/2}}{n^{r/4}} \right]^{1/r} \frac{1}{n^{1/4}} \to 0,\label{04}
\end{equation}
by Assumption 2b. Therefore, we get $\lambda_{prec} \to 0$. Note that replacing $\|\Theta \|_{\ell_{\infty}} $ by $\|\hat{\Theta}_L  \|_{\ell_{\infty}}$ in the definition of $\lambda_{prec}$ makes no difference since by (\ref{02}) we still get $\lambda_{prec} \stackrel{p}{\to}0$.

(ii). By Lemma \ref{lemma9} the set  ${\cal C}_1 = \{ \|\hat{\beta}_L - \beta_0 \|_{\infty} \le \lambda_{prec} \}$ has probability approaching one. First, note that on $\mathcal{C}_1$ one has $\max_{j\in S_0^c}|\hat{\beta}_{L,j}| = \max_{j\in S_0^c}| \hat{\beta}_{L,j} - \beta_{0,j} | \le \lambda_{prec}$. Thus, by the definition of $\min_{j\in S_0^c}\hat{w}_j\to 1$ on $\mathcal{C}_1$.

(iii). On $\mathcal{C}_1$
%\begin{equation}
%\| \hat{\beta}_L - \beta_0 \|_{\infty} \le  \lambda_{prec}=  \frac{9 \lambda_n}{4} \|\Theta\|_{l_{\infty}} .\label{04}
%\end{equation}
%
%Then take $j \in S_0$, on ${\cal C}_1 = \{ \|\hat{\beta}_L - \beta_0 \|_{\infty} \le \lambda_{prec} \}$ 
\begin{eqnarray}
\min_{j\in S_0}| \hat{\beta}_{L,j} | &\ge &  | \beta_{0,j}| - |\hat{\beta}_{L,j} - \beta_{0,j}| 
 \ge 
\min_{j\in S_0}(|\beta_{0,j}| - \lambda_{prec} )
= 
\lambda_{prec} \min_{j\in S_0}\left[ \left| \frac{\beta_{0,j}}{\lambda_{prec}}\right| - 1 \right].\label{05}
\end{eqnarray}
Thus, since $\min_{j \in S_0} |\beta_{0,j} |/\lambda_{prec}\to\infty$ we have that $\min_{j\in S_0}| \hat{\beta}_{L,j} |\geq \lambda_{prec}$ for $n$ sufficiently large.
%Note that we have $\min_{j \in S_0} |\beta_{0,j} | \ge  (\lambda_{prec})^{d_1}$, where $0 < d_1 <1$. So clearly,  for $j \in S_0$ with (i) ($\lambda_{prec} \to 0$)
%\begin{equation}
%\left| \frac{\beta_{0,j}}{\lambda_{prec}}\right| \to \infty.\label{06}
%\end{equation}
Hence, by (\ref{05}), on ${\cal C}_1$ which has probability tending to one,
\begin{equation}
\max_{j\in S_0}\hat{w}_j  = \frac{\lambda_{prec}}{\min_{j\in S_0}|\hat{\beta}_{L,j}| \vee \lambda_{prec}}  = \frac{\lambda_{prec}}{\min_{j\in S_0}|\hat{\beta}_{L,j}|}
\le \frac{1}{\min_{j\in S_0}\frac{|\beta_{0,j}|}{\lambda_{prec}} -1 } \to 0.\label{07}
\end{equation}
%Since we have (\ref{04}), with (\ref{07}), for all $j \in S_0$, with probability approaching one 
%\[ \hat{w}_j \to 0 .\]
\end{proof}

Now, we provide an oracle inequality for a general weighted Lasso which satisfies certain assumptions and then utilize that the plain Lasso and the conservative Lasso satisfy these assumptions. Define 
\begin{align*}
 \hat{\beta}_w = \argmin_{\beta \in \mathbb{R}^p} \del[2]{ \| Y - X \beta\|_n^2 + 2\lambda_n \sum_{j=1}^p \hat{w}_{g,j} |\beta_j| },
\end{align*}
where $\hat{w}_{g,j}$ denotes a general weight. When $\hat{w}_{g,j} =1$ one recovers the Lasso, when $\hat{w}_{g,j}=\hat{w}_j$ the result is the conservative Lasso. In particular, we shall work on the intersection of $\mathcal{A}=\cbr[1]{\enVert[0]{X'u/n}_\infty \leq \lambda_n/2}$ and $\mathcal{B}=\cbr[1]{\phi^2_{\hat{\Sigma}}\geq \phi^2_\Sigma/2}$. On these sets we have a handle on the maximal empirical ``correlation" between the covariates and the error terms, and a lower bound on the empirical adaptive restricted eigenvalue, respectively. Define $a_n=\enVert[0]{\hat{w}_{S_0}}_{\infty}$.

\begin{lemma}\label{A1}
Let $\hat{w}_{g,S_0^c}^{min} = \min_{j \in S_0^c} \hat{w}_j=1$ and $a_n\leq 1$. Then, on the set $\mathcal{A}\cap\mathcal{B}$ the following inequalities are valid.
\begin{align}
\| X (\hat{\beta}_w - \beta_0) \|_n^2 
\le 
2 (2a_n+1)^2 \frac{ \lambda_n^2s_0 }{\phi^2_\Sigma (s_0)}\label{IQ1}.\\
\|\hat{\beta}_w - \beta_0 \|_1 
\le 
4 (a_n+1) (2 a_n +1) \frac{\lambda_ns_0}{\phi^2_\Sigma (s_0)}\label{IQ2}. 
\end{align}

\end{lemma}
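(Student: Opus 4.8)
The plan is to follow the classical oracle-inequality argument of \cite{bickelrt2009}, adapted to the weighted penalty and carried out entirely on the event $\mathcal{A}\cap\mathcal{B}$. Writing $\delta=\hat{\beta}_w-\beta_0$, I would first record the \emph{basic inequality} obtained from the fact that $\hat{\beta}_w$ minimizes the penalized objective: comparing its value at $\hat{\beta}_w$ with its value at $\beta_0$ and substituting $Y=X\beta_0+u$ gives
\begin{align*}
\|X\delta\|_n^2 \le 2(X'u/n)'\delta + 2\lambda_n\sum_{j=1}^p \hat{w}_{g,j}\left(|\beta_{0,j}|-|\hat{\beta}_{w,j}|\right).
\end{align*}
On $\mathcal{A}$ the cross term is controlled by H\"older's inequality, $2|(X'u/n)'\delta|\le 2\|X'u/n\|_\infty\|\delta\|_1\le \lambda_n\|\delta\|_1$, so the entire right-hand side is expressed purely in terms of $\delta$ and the weights.

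The next step is to exploit the two weight assumptions to turn this into a cone condition. Since $\beta_{0,j}=0$ on $S_0^c$, the penalty difference there equals $-\hat{w}_{g,j}|\delta_j|$, and the assumption $\min_{j\in S_0^c}\hat{w}_{g,j}=1$ (hence $\hat{w}_{g,j}\ge 1$ on $S_0^c$) lets me bound it by $-\|\delta_{S_0^c}\|_1$; on $S_0$ the triangle inequality together with Cauchy--Schwarz and $\|\hat{w}_{g,S_0}\|_2\le\sqrt{s_0}$ bounds the penalty difference by $\sqrt{s_0}\|\delta_{S_0}\|_2$. Substituting these and splitting $\|\delta\|_1=\|\delta_{S_0}\|_1+\|\delta_{S_0^c}\|_1$, using $\|X\delta\|_n^2\ge0$ and $\|\delta_{S_0}\|_1\le\sqrt{s_0}\|\delta_{S_0}\|_2$, yields $\|\delta_{S_0^c}\|_1\le 3\sqrt{s_0}\|\delta_{S_0}\|_2$. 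This is precisely the cone appearing in the definition (\ref{REs}) of the adaptive restricted eigenvalue, so $\delta$ is an admissible direction.

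With the cone condition in hand I would invoke $\mathcal{B}$, on which $\phi^2_{\hat{\Sigma}}(s_0)\ge\phi^2_\Sigma(s_0)/2$, to obtain the lower bound $\|X\delta\|_n^2=\delta'\hat{\Sigma}\delta\ge \tfrac{1}{2}\phi^2_\Sigma(s_0)\|\delta_{S_0}\|_2^2$. Meanwhile, discarding the negative term in the basic inequality and bounding $\|\delta_{S_0}\|_1\le\sqrt{s_0}\|\delta_{S_0}\|_2$ gives the matching upper bound $\|X\delta\|_n^2\le 3\lambda_n\sqrt{s_0}\|\delta_{S_0}\|_2$. Combining the two bounds cancels one power of $\|\delta_{S_0}\|_2$ and yields $\|\delta_{S_0}\|_2\le 6\lambda_n\sqrt{s_0}/\phi^2_\Sigma(s_0)$; feeding this back into the upper bound produces the prediction bound (\ref{IQ1}), and inserting it into $\|\delta\|_1\le 4\sqrt{s_0}\|\delta_{S_0}\|_2$ (itself a consequence of the cone condition) produces the estimation bound (\ref{IQ2}).

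The conceptual core --- and essentially the only place requiring care --- is the cone-condition step, where the two weight assumptions must interlock exactly: the lower bound $\hat{w}_{g,j}\ge 1$ on $S_0^c$ is what lets the penalty absorb $\|\delta_{S_0^c}\|_1$ with a coefficient large enough to survive the H\"older term from $\mathcal{A}$, while $\|\hat{w}_{g,S_0}\|_2\le\sqrt{s_0}$ is exactly what keeps the support penalty at the scale $\sqrt{s_0}\|\delta_{S_0}\|_2$, so that the resulting cone constant is the $3\sqrt{s_0}$ demanded by (\ref{REs}). Everything afterwards is bookkeeping with Cauchy--Schwarz and the restricted-eigenvalue bound; I expect no genuine obstacle, only the need to track the numerical constants $18$ and $24$ so that they match the stated inequalities.
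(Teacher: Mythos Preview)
Your proposal is correct and follows essentially the same approach as the paper's proof: the basic inequality, the H\"older step on $\mathcal{A}$, the use of the two weight hypotheses to derive the cone condition $\|\delta_{S_0^c}\|_1\le 3\sqrt{s_0}\|\delta_{S_0}\|_2$, and the invocation of the restricted eigenvalue on $\mathcal{B}$ all appear identically. The only cosmetic difference is that the paper, after reaching $\|X\delta\|_n^2+\lambda_n\|\delta_{S_0^c}\|_1\le 3\lambda_n\sqrt{s_0}\|X\delta\|_n/\phi_{\hat\Sigma}(s_0)$, closes via Young's inequality $3uv\le u^2/2+(9/2)v^2$ to bound $\|X\delta\|_n^2$ directly, whereas you first isolate $\|\delta_{S_0}\|_2$ from the two-sided bound on $\|X\delta\|_n^2$ and then substitute back; both routes are standard and deliver the same constants $18$ and $24$.
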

\begin{proof} 
We begin by establishing (\ref{IQ1}). By the minimizing property of $\hat{\beta}_w$ it follows that
\begin{equation}
\| Y - X \hat{\beta}_w \|_n^2 + 2\lambda_n \sum_{j=1}^p \hat{w}_{g,j}  | \hat{\beta}_{w,j}| 
\leq
\| Y - X \beta_0 \|_n^2 + 2\lambda_n  \sum_{j=1}^p \hat{w}_{g,j} |\beta_{0,j}|.\label{cl.1}
\end{equation}
Inserting $Y=X\beta_0+u$, using Hölder's inequality, and using that we are on the set $ \mathcal{A}$ we arrive at
\begin{equation}
\| X (\hat{\beta}_w - \beta_0) \|_n^2 + 2\lambda_n \sum_{j=1}^p \hat{w}_{g,j} | \hat{\beta}_{w,j}| 
\le 
\lambda_n \|\hat{\beta}_w - \beta_0 \|_1 + 2\lambda_n \sum_{j=1}^p \hat{w}_{g,j} | \beta_{0,j}|.\label{cl.2}
\end{equation} 
Then, using $ \|\hat{\beta}_w \|_1 = \| \hat{\beta}_{w,S_0} \|_1 + \| \hat{\beta}_{w,S_0^c} \|_1$ one gets
\begin{align}
 \| X (\hat{\beta}_w - \beta_0) \|_n^2 + 2\lambda_n \sum_{j \in S_0^c} \hat{w}_{g,j} | \hat{\beta}_{w,j}|
& \le  \lambda_n \| \hat{\beta}_w - \beta_0 \|_1 - 2\lambda_n\sum_{j \in S_0} \hat{w}_{g,j} | \hat{\beta}_{w,j} | + 2\lambda_n \sum_{j=1}^p \hat{w}_{g,j} | \beta_{0,j} | \nonumber \\
& \le  \lambda_n \|\hat{\beta}_w - \beta_0 \|_1 + 2\lambda_n \sum_{j \in S_0} \hat{w}_{g,j} | \hat{\beta}_{w,j} - \beta_{0,j} |.\label{cl.2a}
\end{align} 
Noting that $\| \hat{\beta}_w - \beta_0 \|_1 = \| \hat{\beta}_{w,S_0} - \beta_{0,S_0} \|_1 +\| \hat{\beta}_{w,S_0^c} \|_1$
and $ \sum_{j \in S_0^c} \hat{w}_{g,j} |\hat{\beta}_{w,j}| \ge \hat{w}_{S_0^c}^{min} \| \hat{\beta}_{w, S_0^c} \|_1=\enVert[0]{\hat{\beta}_{w, S_0^c}}$ rewrite (\ref{cl.2a}) as 
\begin{equation}
 \| X (\hat{\beta}_w - \beta_0) \|_n^2 + 2\lambda_n  \|\hat{\beta}_{w,S_0^c} \|_1
 \le  
\lambda_n \|\hat{\beta}_{w,S_0} - \beta_{0,S_0} \|_1 + \lambda_n \|\hat{\beta}_{w, S_0^c} \|_1
 +2\lambda_n  \sum_{j \in S_0} \hat{w}_{g,j} | \hat{\beta}_{w,j} - \beta_{0,j} |.\label{cl.2aa}
\end{equation}
Subtract $\lambda_n \|\hat{\beta}_{w, S_0^c} \|_1$ from both sides of (\ref{cl.2aa}) to get
\begin{equation}
 \| X (\hat{\beta}_w - \beta_0) \|_n^2 + \lambda_n  \|\hat{\beta}_{w,S_0^c} \|_1
 \le  
\lambda_n \|\hat{\beta}_{w,S_0} - \beta_{0,S_0} \|_1
 +2\lambda_n  \sum_{j \in S_0} \hat{w}_{g,j} | \hat{\beta}_{w,j} - \beta_{0,j} |.\label{cl.2ab}
\end{equation}
Next, use the Cauchy-Schwarz inequality, $\|.\|_1 \le  \sqrt{s_0} \|.\|_2$, as well as $\|\hat{w}_{g,S_0}\|_2\leq a_n \sqrt{s_0}$, and $0 < a_n \le 1$ to get
\begin{align} 
\| X (\hat{\beta}_w - \beta_0) \|_n^2 + \lambda_n \|\hat{\beta}_{w, S_0^c} \|_1 
&\le 
\lambda_n \sqrt{s_0}\|\hat{\beta}_{w, S_0} - \beta_{0, S_0} \|_2 + 2\lambda_n \|\hat{w}_{g,S_0}\|_2 \|\hat{\beta}_{w,S_0} - \beta_{0,S_0} \|_2\notag\\
&\le
 (2 a_n +1)  \lambda_n \sqrt{s_0} \|\hat{\beta}_{w,S_0} - \beta_{0,S_0} \|_2\label{cl4.00}\\
&
\le 
3\lambda_n\sqrt{s_0}\|\hat{\beta}_{w,S_0} - \beta_{0,S_0} \|_2. \label{cl.4}
\end{align}
(\ref{cl.4}) implies that
\[ \|\hat{\beta}_{w, S_0^c}\|_1 \le  3  \sqrt{s_0} \|\hat{\beta}_{w,S_0} - \beta_{0,S_0} \|_2.\]
Hence, by the adaptive restricted eigenvalue condition, (\ref{cl4.00}) implies
\begin{equation}
\| X (\hat{\beta}_w - \beta_0) \|_n^2 + \lambda_n \|\hat{\beta}_{w, S_0^c} \|_1
 \le 
(2 a_n +1)  \lambda_n \sqrt{s_0}  \frac{ \| X ( \hat{\beta}_w - \beta_0) \|_n}{\phi_{\hat{\Sigma}}(s_0)}.\label{cl.5}
\end{equation}
Then, using $(2 a_n +1)  uv \le u^2/2 + (2 a_n +1)^2 v^2/2$, with $v=\lambda_n \sqrt{s_0}/\phi_{\hat{\Sigma}}(s_0)$, $u=\| X ( \hat{\beta}_w - \beta_0) \|_n$, one gets 
\begin{equation}
\| X (\hat{\beta}_w - \beta_0) \|_n^2 + \lambda_n \|\hat{\beta}_{w, S_0^c} \|_1
 \le \frac{\| X ( \hat{\beta}_w - \beta_0) \|_n^2}{2}+ \frac{2 (a_n+1)^2}{2} \frac{ \lambda_n^2 s_0 }{\phi_{\hat{\Sigma}}^2(s_0)}.\label{cl.6}
\end{equation}
Subtracting the first right hand side term in (\ref{cl.6}) from the left and right hand sides of (\ref{cl.6}) and multiplying all terms by 2 yields
\begin{equation}
 \| X (\hat{\beta}_w - \beta_0) \|_n^2 +  2\lambda_n \|\hat{\beta}_{w, S_0^c} \|_1
\le 
(2 a_n+1)^2 \frac{ \lambda_n^2 s_0 }{\phi_{\hat{\Sigma}}^2(s_0)},\label{cl.7}
\end{equation}
which, using that we are on $\mathcal{B}$, implies (\ref{IQ1}).

Next, we turn to proving (\ref{IQ2}). By adding $\lambda_n\enVert[0]{\hat{\beta}_{w,S_0}-\beta_{0,S_0}}_1$ to both sides of (\ref{cl4.00}) and using 
$\|.\|_1 \le \sqrt{s_0} \|.\|_2$ one gets
%$\|\hat{\beta}_{w,S_0^c} \|_1 + \| \hat{\beta}_{w,S_0} - \beta_{0,S_0} \|_1 = \|\hat{\beta}_w - \beta_0 \|_1$ one gets
%
\begin{align}
\lambda_n \|\hat{\beta}_w - \beta_0 \|_1 
&\leq
\lambda_n \|\hat{\beta}_{w,S_0} - \beta_{0,S_0} \|_1 + (2 a_n +1) \lambda_n \sqrt{s_0} \|\hat{\beta}_{w, S_0} - \beta_{0,S_0} \|_2\\
&\leq
2 ( a_n +1) \lambda_n \sqrt{s_0} \|\hat{\beta}_{w, S_0} - \beta_{0,S_0} \|_2.
\end{align}
The adaptive restricted eigenvalue condition and inequality (\ref{IQ1}) of this Lemma yield
\begin{align}
 \|\hat{\beta}_w - \beta_0 \|_1 
\leq
2 (a_n +1)  \sqrt{s_0} \frac{ \| X (\hat{\beta}_{w} - \beta_0)\|_n}{\phi_{\hat{\Sigma}}(s_0)}
\le 
 \frac{4 (a_n+1) (2 a_n +1) s_0\lambda_n}{\phi_{\Sigma}^2(s_0)},
\end{align}
which, using that we are on $\mathcal{B}$, implies (\ref{IQ2}).
\end{proof}
    
To prove Lemma \ref{Lasso} and Theorem \ref{thm1} it suffices to provide a lower bound on the probabilities of $\mathcal{A}$ and $\mathcal{B}$. To do so, recall the Marcinkiewicz-Zygmund inequality:
\begin{lemma}\label{MZ}[Marcinkiewicz-Zygmund inequality, see \cite{linb10}, result 9.7.a]
Let $\cbr{U_i}_{i=1}^n$ be a sequence of independent mean zero real random variables with finite $r'th$ moment. Then, for positive constants $a_r$ and $b_r$, only depending on $r$, $r\ge2$
\begin{align}
a_rE\del[3]{\sum_{i=1}^nU_i^2}^{r/2}\leq E\envert[3]{\sum_{i=1}^nU_i}^r
\leq
b_r E\del[3]{\sum_{i=1}^nU_i^2}^{r/2}\label{MZ1}
\end{align}
\end{lemma}
Note in particular that, by an application of the summation version of Jensen's inequality on the convex map $x\mapsto x^{r/2}$, (\ref{MZ1}) implies that 
\begin{align*}
E\envert[3]{\sum_{i=1}^nU_i}^r
\le
b_r n^{r/2}E\del[3]{\frac{1}{n}\sum_{i=1}^nU_i^2}^{r/2}
\leq
 b_rn^{r/2-1}\sum_{i=1}^nE\envert{U_i}^r
\leq
b_rn^{r/2}\max_{1\leq i\leq n}E\envert[0]{U_i}^r.
\end{align*}
Hence, by a union bound and Markov's inequality we arrive at the following result which we shall use frequently throughout the appendix.

\begin{lemma}\label{conc}
For each $j\in\cbr[0]{1,...,m}$ let $\cbr{U_{j,i}}_{i=1}^n$ be a sequence of independent mean zero real random variables with finite $r'th$ moment and define $S_{j,n}=\sum_{i=1}^nU_{j,i}$. Then,
\begin{align*}
P\del[2]{\max_{1\leq j\leq m}|S_{j,n}|\geq t}\leq b_r m\frac{n^{r/2}\max_{1\leq j\leq m}\max_{1\leq i\leq n}E|U_{j,i}|^r}{t^r}.
\end{align*}
\end{lemma}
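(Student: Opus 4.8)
The plan is to combine the moment bound established immediately before the lemma statement with Markov's inequality and a union bound over $j$. Recall that the Marcinkiewicz--Zygmund inequality, together with the summation version of Jensen's inequality applied to the convex map $x\mapsto x^{r/2}$ (valid since $r\geq 2$), already gives, for any single sequence $\cbr[0]{U_i}_{i=1}^n$ of independent mean-zero random variables with finite $r$'th moment,
\begin{align*}
E\envert[3]{\sum_{i=1}^n U_i}^r \leq b_r n^{r/2}\max_{1\leq i\leq n}E\envert[0]{U_i}^r.
\end{align*}
I would take this as the workhorse of the proof.

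First I would fix $j\in\cbr[0]{1,\dots,m}$ and apply the display above to the sequence $\cbr[0]{U_{j,i}}_{i=1}^n$, which by hypothesis consists of independent mean-zero variables with finite $r$'th moment, obtaining $E\envert[0]{S_{j,n}}^r \leq b_r n^{r/2}\max_{1\leq i\leq n}E\envert[0]{U_{j,i}}^r$. Next I would convert this moment bound into a tail bound through Markov's inequality applied to the nonnegative variable $\envert[0]{S_{j,n}}^r$: since $\cbr[0]{\envert[0]{S_{j,n}}\geq t}=\cbr[0]{\envert[0]{S_{j,n}}^r\geq t^r}$ for $t>0$, one has
\begin{align*}
P\del[1]{\envert[0]{S_{j,n}}\geq t} \leq \frac{E\envert[0]{S_{j,n}}^r}{t^r} \leq \frac{b_r n^{r/2}\max_{1\leq i\leq n}E\envert[0]{U_{j,i}}^r}{t^r}.
\end{align*}

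Finally I would bound the probability of the maximum by a union bound and replace the $j$-dependent moment factor by the double maximum over $(j,i)$:
\begin{align*}
P\del[2]{\max_{1\leq j\leq m}\envert[0]{S_{j,n}}\geq t} \leq \sum_{j=1}^m P\del[1]{\envert[0]{S_{j,n}}\geq t} \leq b_r m\,\frac{n^{r/2}\max_{1\leq j\leq m}\max_{1\leq i\leq n}E\envert[0]{U_{j,i}}^r}{t^r},
\end{align*}
which is exactly the assertion. I do not anticipate any genuine obstacle: every step is elementary once the Marcinkiewicz--Zygmund bound is in hand. The only point deserving attention is that the workhorse moment bound holds uniformly in $j$ with the \emph{same} constant $b_r$, so that after the union bound the constant $b_r$ and the factor $m$ factor out and the $j$-specific maxima can be dominated by the single double maximum without accumulating any extra $j$-dependent terms.
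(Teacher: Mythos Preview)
Your proposal is correct and follows essentially the same approach as the paper: the paper derives the moment bound $E\envert[0]{S_{j,n}}^r\leq b_r n^{r/2}\max_i E\envert[0]{U_{j,i}}^r$ from the Marcinkiewicz--Zygmund inequality and Jensen, and then simply states that a union bound and Markov's inequality yield the lemma. Your write-up spells out precisely these two elementary steps.
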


{\bf Remarks}: 1. In Lemma \ref{conc} above we used the Marcinkiewicz-Zygmund inequality.  Another common approach is using Nemirowski's inequality, see \cite{van2014}. We show that application of Nemirowski's inequality will bring an additional $\left(8 \log (2m) \right)^{r/2}$ in Lemma \ref{conc}. To make this point clear, for $r\ge 2$, note that Nemirovski's inequality in Lemma 14.24 of \cite{van2014} yields  
\begin{align}
E \del[2]{\max_{1\leq j\leq m} |S_{j,n}|^r} 
\le
 \left(8 \log (2m) \right)^{r/2} E \sbr[3]{ \max_{1 \le j \le m} \sum_{i=1}^n U_{j,i}^2}^{r/2}.\label{nem}
\end{align}
Thus, we need to bound $E \sbr[1]{ \max_{1 \le j \le m} \sum_{i=1}^n U_{j,i}^2}^{r/2}$. By convexity of $x\mapsto x^{r/2}$ and Jensen's inequality
\begin{align*}
E \sbr[3]{ \max_{1 \le j \le m} \sum_{i=1}^n U_{j,i}^2}^{r/2}
&=
n^{r/2}E  \max_{1 \le j \le m} \sbr[3]{\frac{1}{n}\sum_{i=1}^nU_{j,i}^2}^{r/2}
\leq
n^{r/2}E \max_{1 \le j \le m} \frac{1}{n}\sum_{i=1}^n |U_{j,i}|^r\\
&\leq
n^{r/2-1}E \sum_{j=1}^m \sum_{i=1}^n |U_{j,i}|^r
\leq
n^{r/2}m\max_{1\leq j\leq m}\max_{1\leq i\leq n}E|U_{j,i}|^r.
\end{align*}
Inserting the above display into (\ref{nem}) and using Markov's inequality yields
\begin{align*}
P\del[2]{\max_{1\leq j\leq m}|S_{j,n}|\geq t}
\leq \frac{\left(8 \log (2m) \right)^{r/2}n^{r/2}m\max_{1\leq j\leq m}\max_{1\leq i\leq n}E|U_{j,i}|^r}{t^r}.
\end{align*}
Note that the above bound, relying on Nemirovski's inequality, is larger by a factor $\left(8 \log (2m) \right)^{r/2}$ (which increases in $m$) than the bound in Lemma \ref{conc}. This will result in lower choices of the tuning parameter and hence sharper bounds. This is a new theoretical contribution of the paper.

2. In a seminal paper about optimal instrumental variable selection, \cite{vicecta2012} use self-normalized moderate deviation results to get the tuning parameter and its rate. They propose a heteroskedasticity consistent penalty term unlike our data dependent penalty which focuses on creating a wedge between zero and nonzero parameters.  Condition  RF (iii) in the analysis of \cite{vicecta2012} results in $\log^3(p)/n=o(1)$. However, our rate for $\lambda_n$ will require $p^{2/r}/n^{1/2} \to 0$. The reason for this is we are interested in maxima of sums, as in the previous lemma, unlike \cite{vicecta2012} who use maxima of self normalized sum (i.e. sum normalized by the $\ell_2$ norm of the vector of variables) which provides their rate.

%Substituting now the last result in Markov's inequality we have
%\begin{equation}
% P \left( \max_{1 \le j \le m} \frac{|S_{j,n}|}{n} >t \right)
%\le \frac{ E \max_{1 \le j \le m}  |S_{j,n}|^2}{n^2 t^2}
%\le \left(\frac{8 log 2m}{n} \right)
%\frac{m^{4/r} [\max_{1 \le j \le m}   [\max_{1 \le i \le n} E |U_{j,i}|^{r/2}]^{4/r}}{t^2}.\label{nem}
%\end{equation}
%But via  Lemma \ref{conc} we have 
%\begin{equation}
% P \left( \max_{1 \le j \le m} \frac{|S_{j,n}|}{n} >t \right)
%\le \frac{b_r}{n^{r/2}}
%\frac{m  [\max_{1 \le j \le m}   \max_{1 \le i \le n} E |U_{j,i}|^r}{t^r}.\label{mz}
%\end{equation}
%The key will be the comparison of each choice of $t$. In (\ref{mz}) which this paper uses, $t = C m^{2/r}/n^{1/2}$, in 
%(\ref{nem}), $t=C m^{2/r} \sqrt{logm/n}$, where $C>0$ is a constant. Clearly $t$ in (\ref{mz}) will go to zero faster, so this will make the tuning parameter choice below, much better and sharper. This Remark shows that to get better tuning parameters and bound the noise better, in the case of finite moments in the data opposed to uniform bounds on regressors, we need Lemma 5, rather than Nemirowski type approach. 

We are now ready to provide a lower bound on the probability of $\mathcal{A}$.
\begin{lemma}\label{Noise}
Let $M>0$ be an arbitrary positive number. Then, under Assumption 1, for $\lambda_n=M\frac{p^{2/r}}{\sqrt{n}}$ the set $\mathcal{A}=\cbr[1]{\enVert[0]{X'u/n}_\infty \leq \lambda_n/2}$ has probability at least $1-\frac{C}{M^{r/2}}$, for a universal constant $C>0$.
\end{lemma}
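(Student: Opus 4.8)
The plan is to express $\mathcal{A}$ as a maximal-deviation event and to control its complement with the concentration bound of Lemma~\ref{conc}. Writing the $j$th coordinate of $X'u/n$ as $\frac1n\sum_{i=1}^n X_{i,j}u_i$, I would set $U_{j,i}=X_{i,j}u_i$ and $S_{j,n}=\sum_{i=1}^n U_{j,i}$, so that $\|X'u/n\|_\infty=\frac1n\max_{1\le j\le p}|S_{j,n}|$ and hence
\begin{equation*}
P(\mathcal{A}^c)=P\left(\max_{1\le j\le p}|S_{j,n}|>\tfrac{n\lambda_n}{2}\right).
\end{equation*}

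First I would verify the hypotheses of Lemma~\ref{conc} for the arrays $\{U_{j,i}\}$, $j=1,\dots,p$. Independence across $i$ for each fixed $j$ is inherited from the independence of the pairs $(X_i,u_i)$ built into Assumption~1, and the mean-zero property follows by conditioning, $E(U_{j,i})=E\left[X_{i,j}E(u_i\mid X_i)\right]=0$. The decisive question is which moment order I may use. Because Assumption~1 only bounds the $r$th moments of $X_{i,j}$ and $u_i$ \emph{separately} and allows $u_i$ to depend on $X_i$, I cannot control $E|U_{j,i}|^r$; instead Cauchy--Schwarz gives
\begin{equation*}
\max_{j,i}E|U_{j,i}|^{r/2}\le\left(\max_jE|X_{1,j}|^{r}\right)^{1/2}\left(\max_iE|u_i|^{r}\right)^{1/2}\le C,
\end{equation*}
so the products $U_{j,i}$ possess a uniformly bounded moment of order $r/2$, which is exactly the order I will feed into Lemma~\ref{conc}.

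I would then invoke Lemma~\ref{conc} with moment order $r/2$ in place of $r$, with $m=p$ and $t=n\lambda_n/2$. Inserting $\lambda_n=Mp^{2/r}/\sqrt n$ gives $\lambda_n^{r/2}=M^{r/2}p\,n^{-r/4}$, whence $t^{r/2}=n^{r/4}M^{r/2}p/2^{r/2}$; the factor $n^{r/4}$ produced in the numerator of Lemma~\ref{conc} then cancels, as does the lone power of $p$, leaving
\begin{equation*}
P(\mathcal{A}^c)\le b_{r/2}\,p\,\frac{n^{r/4}C}{t^{r/2}}=\frac{b_{r/2}\,2^{r/2}\,C}{M^{r/2}}.
\end{equation*}
Absorbing $b_{r/2}2^{r/2}$ into the universal constant yields $P(\mathcal{A})\ge1-C/M^{r/2}$, as claimed.

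The main subtlety---and the reason the exponent in the bound is $r/2$ rather than $r$---is precisely this moment bookkeeping: conditional heteroskedasticity forbids factoring $E|X_{i,j}u_i|^r$, so one must pass through Cauchy--Schwarz and thereby sacrifice half of the available moments, after which Lemma~\ref{conc} can be applied only at order $r/2$. The one place that demands care is tracking the powers of $n$, $p$ and $M$ so that the $n^{r/4}$ and $p$ factors cancel and only $M^{-r/2}$ survives; everything else is a routine verification of the hypotheses of Lemma~\ref{conc}. (Implicit here is $r/2\ge2$, which is what the Marcinkiewicz--Zygmund inequality underlying Lemma~\ref{conc} requires.)
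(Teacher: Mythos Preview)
Your proof is correct and matches the paper's own argument essentially line for line: write the $j$th coordinate as a sum of independent mean-zero terms, bound their $(r/2)$th moment via Cauchy--Schwarz, and apply Lemma~\ref{conc} with $m=p$ and $t=n\lambda_n/2$ so that the factors of $p$ and $n^{r/4}$ cancel. Your closing remark that Lemma~\ref{conc} at order $r/2$ implicitly needs $r/2\ge 2$ (i.e.\ $r\ge 4$) is a fair observation that the paper itself does not make explicit; under the later Assumption~3 ($r\ge 6$) this is harmless, but strictly speaking Assumption~1 alone only guarantees $r\ge 2$.
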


\begin{proof}
For each $j\in \cbr[0]{1,...,p}$, $\cbr[0]{X_{j,i}u_i}_{i=1}^n$ is a sequence of independent mean zero random variables with $(r/2)'th$ moment $E|X_{j,i}u_i|^{r/2}\leq \sqrt{E|X_{j,i}|^rE|u_i|^r}\leq C$. Hence, Lemma \ref{conc} yields
\begin{align*}
P(\mathcal{A}^c)
=
P\del[2]{\enVert[0]{X'u}_\infty > n\lambda_n/2}
\leq
p\frac{b_{r/2}Cn^{r/4}}{(n\lambda_n/2)^{r/2}}
=
\frac{C}{M^{r/2}}, 
\end{align*}
where the last equality follows from the choice of $\lambda_n$ and has merged the constants.   
\end{proof}

The next two lemmas will provide a lower bound on the probability of set ${\cal B}$.
\begin{lemma}\label{vdGB}
Let $A$ and $B$ be two positive semi-definite $p\times p$ matrices and assume that $A$ satisfies the restricted eigenvalue condition RE($s$) for some $\phi_A(s)>0$. Then, for $\delta=\max_{1\leq i,j\leq p}\envert[0]{A_{i,j}-B_{i,j}}$, one also has $\phi_B^2\geq \phi_A^2-16s\delta$.
\end{lemma}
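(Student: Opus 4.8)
The plan is to exploit that the two restricted eigenvalues are defined by minimizing the \emph{same} quadratic-form ratio over the \emph{same} constraint set, so they can differ only by the worst-case perturbation $v'(B-A)v$ incurred on that set. To avoid clashing with the scalar $\delta=\max_{i,j}|A_{i,j}-B_{i,j}|$ in the statement, I will write $v$ for a generic nonzero vector entering the definition (\ref{REs}) and $S$ for a set with $|S|\le s$ satisfying $\|v_{S^c}\|_1\le 3\sqrt{s}\,\|v_S\|_2$. Note that any such $v$ automatically has $\|v_S\|_2>0$, since $\|v_S\|_2=0$ would force $\|v_{S^c}\|_1=0$ and hence $v=0$, a contradiction.

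First I would control the $\ell_1$-norm of $v$ in terms of $\|v_S\|_2$ on the cone. Splitting $\|v\|_1=\|v_S\|_1+\|v_{S^c}\|_1$, bounding $\|v_S\|_1\le\sqrt{s}\,\|v_S\|_2$ by Cauchy--Schwarz on the at-most-$s$ coordinates in $S$, and using the cone restriction $\|v_{S^c}\|_1\le 3\sqrt{s}\,\|v_S\|_2$, gives $\|v\|_1\le 4\sqrt{s}\,\|v_S\|_2$, i.e. $\|v\|_1^2\le 16\,s\,\|v_S\|_2^2$. This is precisely where the constant $16$ in the statement originates.

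Next I would bound the perturbation of the quadratic form entrywise, $|v'(B-A)v|\le\sum_{i,j}|v_i||v_j||A_{i,j}-B_{i,j}|\le\delta\,\|v\|_1^2$, and combine it with the previous bound to obtain $|v'(B-A)v|\le 16\,s\,\delta\,\|v_S\|_2^2$. Writing $v'Bv=v'Av+v'(B-A)v$ and invoking the $\mathrm{RE}(s)$ lower bound $v'Av\ge\phi_A^2(s)\,\|v_S\|_2^2$ for $A$ (which holds for this particular admissible $v$ by definition of $\phi_A^2(s)$ as the minimum over the cone) then yields $v'Bv\ge(\phi_A^2(s)-16\,s\,\delta)\|v_S\|_2^2$. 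Dividing by $\|v_S\|_2^2>0$ and taking the infimum over all admissible pairs $(v,S)$---the index set being identical for both matrices since the cone constant $3\sqrt{s}$ does not involve $A$ or $B$---gives $\phi_B^2(s)\ge\phi_A^2(s)-16\,s\,\delta$, as claimed.

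There is no genuinely hard step here: the argument reduces to a one-line perturbation bound once the cone geometry is used to convert $\|v\|_1^2$ into $16\,s\,\|v_S\|_2^2$. The only points requiring care are (i) keeping the quantifiers straight, so that the restricted-eigenvalue inequality for $A$ is applied to every admissible $(v,S)$ \emph{before} passing to the infimum, and (ii) the harmless notational overlap between the scalar $\delta$ and the minimizing vector of (\ref{REs}).
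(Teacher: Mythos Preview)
Your proof is correct and essentially identical to the paper's own argument: both control $\|v\|_1\le 4\sqrt{s}\|v_S\|_2$ on the cone, bound $|v'(A-B)v|\le\delta\|v\|_1^2\le 16s\delta\|v_S\|_2^2$, and pass to the infimum. The only cosmetic differences are that the paper uses H\"older's inequality ($\|v\|_1\|(A-B)v\|_\infty$) rather than your entrywise sum to reach $\delta\|v\|_1^2$, and it minimizes the ratio $v'Av/\|v_S\|_2^2$ on both sides rather than first substituting the lower bound $\phi_A^2(s)$; neither difference is material.
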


\begin{proof}
The proof is similar to Lemma 10.1 in \cite{vdGB09}. For any (non-zero) $p\times 1$ vector $v$ such that $\enVert[0]{v_{S^c}}_{1}\leq 3\sqrt{s}\enVert[0]{v_S}_{2}$ one has
\begin{align*}
v'Av-v'Bv
&\leq
\envert[0]{v'Av-v'Bv}
=
\envert[0]{v'(A-B)v}
\leq 
\enVert[0]{v}_{1}\enVert[0]{(A-B)v}_{\infty}
\leq
\delta\enVert[0]{v}_{1}^2\\
&=\delta\del[1]{\enVert[0]{v_S}_{1}+\enVert[0]{v_{S^c}}_1}^2
\leq 
\delta 16s\enVert[0]{v_S}_{2}^2.
\end{align*} 
Hence, rearranging the above, yields
\begin{align*}
v'Bv
\geq
v'Av - 16s\delta\enVert[0]{v_S}_2^2, 
\end{align*}
or equivalently,
\begin{align*}
\frac{v'Bv}{v_S'v_S}
\geq
\frac{v'Av}{v_S'v_S}-16s\delta. 
 \end{align*}
Minimizing  over $\cbr[0]{v\in \mathbb{R}^n\setminus \{0\}: \enVert[0]{v_{S^c}}_{1}\leq 3\sqrt{s}\enVert[0]{v_S}_{2}}$ and using the adaptive restricted eigenvalue condition yields the claim.
\end{proof}

In order to verify the restricted eigenvalue condition we present the following lemma.

\begin{lemma}\label{RE1}
Let Assumption 1 be satisfied. Then, the set $\mathcal{B}=\cbr[1]{\phi^2_{\hat{\Sigma}}\geq \phi^2_\Sigma/2}$ has probability at least $1-D\frac{p^2s^{r/2}_0}{n^{r/4}}$ for a universal constant $D>0$. 
\end{lemma}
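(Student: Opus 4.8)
\textbf{Proof proposal for Lemma \ref{RE1}.}

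The plan is to reduce the event $\mathcal{B}$ to a statement about the entrywise deviation of $\hat\Sigma$ from $\Sigma$, and then control that deviation by the concentration bound of Lemma \ref{conc}. First I would invoke Lemma \ref{vdGB} with $A=\Sigma$, $B=\hat\Sigma=\frac1n\sum_{i=1}^n X_iX_i'$, and $s=s_0$. By Assumption 1 the population matrix $\Sigma$ satisfies the adaptive restricted eigenvalue condition with $\phi_\Sigma^2(s_0)$ bounded away from zero, so Lemma \ref{vdGB} gives, writing $\delta=\enVert[0]{\hat\Sigma-\Sigma}_\infty$,
\begin{align*}
\phi_{\hat\Sigma}^2(s_0)\ge \phi_\Sigma^2(s_0)-16s_0\delta.
\end{align*}
Consequently, whenever $\delta\le \phi_\Sigma^2(s_0)/(32s_0)$ one has $\phi_{\hat\Sigma}^2(s_0)\ge \phi_\Sigma^2(s_0)/2$, i.e. the deterministic inclusion $\cbr[1]{\delta\le \phi_\Sigma^2(s_0)/(32s_0)}\subseteq\mathcal{B}$ holds. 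It therefore suffices to show that
\begin{align*}
P\del[3]{\enVert[0]{\hat\Sigma-\Sigma}_\infty> \frac{\phi_\Sigma^2(s_0)}{32s_0}}\le D\frac{p^2s_0^{r/2}}{n^{r/4}}.
\end{align*}

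Next I would expand the max-norm as $\enVert[0]{\hat\Sigma-\Sigma}_\infty=\max_{1\le j,k\le p}\envert[1]{\frac1n\sum_{i=1}^n U_{(j,k),i}}$, where $U_{(j,k),i}=X_{j,i}X_{k,i}-E\del[0]{X_{j,i}X_{k,i}}$ is mean zero. There are $m=p^2$ such index pairs. By the Cauchy--Schwarz inequality and the moment bound in Assumption 1, $E\envert[0]{X_{j,i}X_{k,i}}^{r/2}\le \sqrt{E\envert[0]{X_{j,i}}^r\,E\envert[0]{X_{k,i}}^r}\le C$, and after centering (via the $c_r$-inequality) the $(r/2)$'th moments $E\envert[0]{U_{(j,k),i}}^{r/2}$ remain bounded by a universal constant $C'$, uniformly in $i,j,k$. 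I would then apply Lemma \ref{conc} with $m=p^2$ and with $r/2$ playing the role of the moment order $r$ there, at the threshold $t=n\phi_\Sigma^2(s_0)/(32s_0)$. This yields a bound of the form $b_{r/2}\,p^2\,C'\,n^{r/4}/t^{r/2}$; substituting $t$ and simplifying the powers of $n$ ($n^{r/4}/n^{r/2}=n^{-r/4}$) and of $s_0$ gives
\begin{align*}
P(\mathcal{B}^c)\le b_{r/2}C'(32)^{r/2}\del[1]{\phi_\Sigma^2(s_0)}^{-r/2}\frac{p^2s_0^{r/2}}{n^{r/4}}.
\end{align*}
Since $\phi_\Sigma^2(s_0)$ is bounded away from zero by Assumption 1, the factor $\del[0]{\phi_\Sigma^2(s_0)}^{-r/2}$ is bounded, and absorbing all constants into a single $D$ delivers the claim.

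The step I expect to be the main obstacle is the moment bookkeeping in the concentration bound. The products $X_{j,i}X_{k,i}$ possess only $r/2$ finite moments even though the covariates themselves have $r$, so Lemma \ref{conc} must be applied with $r/2$ in place of $r$; it is precisely this halving that produces the exponents $p^2$, $s_0^{r/2}$ and $n^{r/4}$ appearing in the stated probability, and one has to track it carefully through the choice of threshold $t$. The remaining ingredients --- the deterministic inclusion from Lemma \ref{vdGB} and the uniform boundedness of $E\envert[0]{U_{(j,k),i}}^{r/2}$ --- are routine.
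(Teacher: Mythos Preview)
Your proposal is correct and follows essentially the same route as the paper's own proof: reduce $\mathcal{B}$ to the event $\enVert[0]{\hat\Sigma-\Sigma}_\infty\le \phi_\Sigma^2(s_0)/(32s_0)$ via Lemma \ref{vdGB}, then control the entrywise deviation by applying Lemma \ref{conc} with moment order $r/2$ to the $p^2$ centered products $X_{j,i}X_{k,i}-E(X_{j,i}X_{k,i})$. Your moment bookkeeping (the halving from $r$ to $r/2$ and the resulting exponents $p^2$, $s_0^{r/2}$, $n^{r/4}$) is exactly what the paper does, and your explicit use of the $c_r$-inequality for centering and of Assumption 1 to absorb $\phi_\Sigma^2(s_0)^{-r/2}$ into $D$ merely makes explicit two steps the paper leaves implicit.
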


\begin{proof}
By Lemma \ref{vdGB}, with $s=s_0$, it suffices to show that $\delta= \enVert[0]{\hat{\Sigma}-\Sigma}_\infty \leq \frac{\phi^2_\Sigma(s_0)}{32s_0}$. The $(k,l)$ entry of $\hat{\Sigma}-\Sigma$ is given by $\frac{1}{n}\sum_{i=1}^n\del[1]{X_{k,i}X_{l,i}-E(X_{k,i}X_{l,i})}$. Each summand has mean zero and $E\envert[0]{X_{k,i}X_{l,i}-E(X_{k,i}X_{l,i})}^{r/2}$ is bounded by a universal constant $D$ by the Cauchy-Schwarz inequality. Hence, merging constants, Lemma \ref{conc} yields 
\begin{align*}
P(\mathcal{B}^c)
\leq
P\del[3]{\enVert[0]{\hat{\Sigma}-\Sigma}_\infty> \frac{\phi^2_\Sigma(s_0)}{32s_0}}
\leq p^2\frac{Dn^{r/4}}{(\frac{n}{s_0})^{r/2}}
=
D\frac{p^2s^{r/2}_0}{n^{r/4}}.
\end{align*}
\end{proof}

\begin{lemma}\label{lemma9}
Let Assumption 1 be satisfied. Then on $\mathcal{A}\cap\mathcal{B}$ (defined prior to Lemma \ref{A1})
\begin{equation}
\|\hat{\beta}_L - \beta_0 \|_{\infty} \le (\frac{9 \lambda_n}{4})  \| \Theta \|_{\ell_{\infty}},\label{linfty}
\end{equation}
and $\mathcal{A}\cap\mathcal{B}$ occurs with probability at least  $1- \frac{C}{M^{r/2}} -  \frac{D p^2 s_0^{r/2} }{n^{r/4}}$.
\end{lemma}

\begin{proof}
By Lemma 2.5.1 of \cite{vdG14}
%van de Geer (2014) 

\begin{equation}
\|\hat{\beta}_L - \beta_0 \|_{\infty} \le \| \Theta \|_{\ell_{\infty}}
\left[ \frac{\|X'u \|_{\infty}}{n} + \|\hat{\Sigma} - \Sigma\|_{\infty} \|\hat{\beta}_L - \beta_0 \|_1 + \lambda_n \right].\label{l92}
\end{equation}
Using Lemma \ref{Lasso} (see below) we get that on $\mathcal{A}\cap\mathcal{B}$
\begin{equation}
\|\hat{\beta}_L - \beta_0 \|_{\infty} \le \| \Theta \|_{\ell_{\infty}}
\left[ \frac{\lambda_n}{2} + \left(\frac{\phi_{\Sigma}^2 (s_0)}{32 s_0} \right)  \left( \frac{24 \lambda_n s_0}{\phi_{\Sigma}^2 (s_0)} \right) + \lambda_n \right],\label{l93}
\end{equation}
which provides the result after some simple algebra and upon using that Lemmas \ref{Noise}, \ref{RE1} give the lower bound on the probability of $\mathcal{A}\cap\mathcal{B}$.
\end{proof}

\subsection*{Appendix B}
This appendix provides the proofs of the main theorems.

We state the following result on the Lasso. It is very similar to the classical oracle inequality for the Lasso that assumes subgaussianity of the error terms in \cite{bickelrt2009}. However, it is tailored to our Assumption 1 which only assumes $r$ moments of the covariates and the error terms and hence we still mention it here. Furthermore, the result is needed in order to guide our choice of $\lambda_{prec}$ for the conservative Lasso.

\begin{lemma}\label{Lasso}
Let Assumption 1 be satisfied and set $\lambda_n=M\frac{p^{2/r}}{n^{1/2}}$ for $M>0$. Then, with probability at least $1-\frac{C}{M^{r/2}}-D\frac{p^2s^{r/2}_0}{n^{r/4}}$, the Lasso satisfies the following inequalities
\begin{align}
\| X (\hat{\beta}_L - \beta_0) \|_n^2 \label{IQ1lem1}
\le 18\frac{ \lambda_n^2s_0 }{\phi^2_\Sigma (s_0)},\\
\|\hat{\beta}_L - \beta_0 \|_1 \leq
24\frac{\lambda_ns_0}{\phi^2_\Sigma (s_0)},\label{IQ2lem1} 
\end{align}
for universal constants $C, D>0$. Furthermore, these bounds are valid uniformly over the $\ell_0$-ball $\mathcal{B}_{\ell_0}(s_0)=\cbr[1]{\enVert{\beta_{0}}_{\ell_0}\leq s_0}$.

\end{lemma}
%Lemma \ref{Lasso} provides an oracle inequality for the prediction and estimation error of the Lasso under the assumption of uniformly bounded $r$'th  moments of the covariates and the error terms. It is similar in spirit to previous oracle inequalities, however it does not assume subgaussianity. 

\begin{proof}[Proof of Lemma \ref{Lasso}]
The Lasso corresponds to $\hat{w}_j=1$ for all $j=1,...,p$. Thus, Lemma \ref{A1} combined with the lower bounds on the probabilities of the sets $\mathcal{A}$ and $\mathcal{B}$ from Lemmas \ref{Noise} and \ref{RE1} yields (\ref{IQ1lem1}) and (\ref{IQ2lem1}). The uniformity over $\mathcal{B}_{\ell_0}(s_0)$ follows by noting that the right hand sides of (\ref{IQ1lem1}) and (\ref{IQ2lem1}) only depend on $\beta_0$ through $s_0$.

\end{proof}

\begin{proof}[Proof of Theorem \ref{thm1}] 
The oracle inequalities will follow upon verifying the conditions of Lemma \ref{A1} and showing that $\mathcal{A}\cap\mathcal{B}$ has high probability. As all weights of the conservative Lasso as are less than or equal to one it remains to show that $\min_{j \in S_0^c} \hat{w}_j =1$. To this end Lemma \ref{lemma9} (which uses only Assumption 1) shows that $\max_{j \in S_0^c} | \hat{\beta}_{L,j} |
\le \lambda_{prec}=\frac{9 \lambda_n}{4} \|\Theta\|_{\ell_{\infty}}$ on ${\cal A} \cap {\cal B}$ such that $\min_{j \in S_0^c} \hat{w}_j =1$.
%Then, by the observations in section 2.3  we get $\hat{w}_j\leq 1$ for all $j\in S_0$ while $\hat{w}_j=1$ for all $j\in S_0^c$ on $\mathcal{A}\cap\mathcal{B}$. The first observation clearly implies that $\enVert[0]{\hat{w}_{S_0}}_2 \leq a_n \sqrt{s_0}$ while the latter implies that $\hat{w}_{S^c}^{min} = \min_{j \in S_0^c} \hat{w}_j=1$. 
The lower bound on $\mathcal{A}\cap\mathcal{B}$ follows from Lemmas \ref{Noise} and \ref{RE1}. The uniformity over $\mathcal{B}_{\ell_0}(s_0)$ follows by noting that the right hand sides of (\ref{IQ1thm1}) and (\ref{IQ2thm1}) only depend on $\beta_0$ through $s_0$.
\end{proof} 

{\bf $\Theta$'s relation to the regression coefficients}\\
In order to establish a central limit theorem for $\alpha'\hat{\Theta}X'u/n^{1/2}$ in (\ref{stat}) we need to understand the asymptotic properties of $\hat{\Theta}$. To do so we relate $\hat{\Theta}$ to $\Theta:=\Sigma^{-1}$. First, let $\Sigma_{-j,-j}$ represent the $(p-1)\times (p-1)$ submatrix of $\Sigma$ where the $j$th row and column have been removed. $\Sigma_{j,-j}$ is the $j$th row of $\Sigma$ with $j$th element of that row removed. $\Sigma_{-j,j}$ represent the $j$ th column of $\Sigma$ with its $j$th element removed. By Section 2.1 of \cite{yuan2010high} we know that 

\begin{align*}
\Theta_{j,j}
=
\del{\Sigma_{j,j}-\Sigma_{j,-j}\Sigma_{-j,-j}^{-1}\Sigma_{-j,j}}^{-1}
\end{align*}
and
\begin{align*}
\Theta_{j,-j}
=
-\del{\Sigma_{j,j}-\Sigma_{j,-j}\Sigma_{-j,-j}^{-1}\Sigma_{-j,j}}^{-1}\Sigma_{j,-j}\Sigma_{-j,-j}^{-1}
=
-\Theta_{j,j}\Sigma_{j,-j}\Sigma_{-j,-j}^{-1}
\end{align*}
Next, let $X_{j,i}$ denote the $i$th element of $X_j$ and $X_{-j,i}$ the $i$th element of $X_{-j}$ (recall the definition of $X_j$ and $X_{-j}$ just prior to (\ref{NodeLObj})). Now, defining $\gamma_{j}$ as the value of $\gamma$ minimizing, 
\begin{align*}
E\del[1]{X_{j,i}-X_{-j,i}\gamma}^2
\end{align*}
implies that
\begin{align*}
\gamma_{j}'
=
\Sigma_{j,-j}\Sigma_{-j,-j}^{-1}
\end{align*}
such that
\begin{align}
\Theta_{j,-j}=-\Theta_{j,j}\gamma_{j}'\label{nodeaux1}.
\end{align}
Thus, for $\eta_{j,i}:=X_{j,i}-X_{-j,i}\gamma_j$, it follows from the definition of $\gamma_j$ as an $L^2$-projection that all entries of $X_{-j,i}\eta_{j,i}$ have mean zero such that
\begin{align}
X_{j,i}=X_{-j,i}\gamma_{j}+\eta_{j,i}\label{nodeaux}
\end{align}
is a regression model with covariates orthogonal in $L^2$ to the error terms for all $j=1,...,p$ and $i=1,...,n$. Let $\Theta_j$ be the $j$'th row of $\Theta$ written as a column vector. Then the crux is that (\ref{nodeaux}) is sparse if and only if $\Theta_j$ is sparse as can be seen from (\ref{nodeaux1}). Let $S_j=\cbr[1]{k=1,...,p: \Theta_{j,k}\neq 0}$ with cardinality $s_j=|S_j|$ denote the indices of the non-zero terms of $\Theta_j$. Then, the regression model (\ref{nodeaux}) will also be sparse with $\gamma_j$ possessing $s_j$ non-zero entries. Thus, with Theorem \ref{thm1} in mind it is sensible that the estimator $\hat{\gamma}_j$ resulting from (\ref{NodeCLObj}) is close to $\gamma_j$. We make this claim rigorous in Lemma \ref{Node}. Next, by (\ref{nodeaux}), 
\begin{align*}
\Sigma_{j,j}
=
E(X_{j,i}^2)
=
\gamma_j'\Sigma_{-j,-j}\gamma_j+E(\eta_{j,i}^2)
=
\Sigma_{j,-j}\Sigma_{-j,-j}^{-1}\Sigma_{-j,j}+E(\eta_{j,i}^2),
\end{align*}
such that
\begin{align*}
\tau_j^2
:=
E(\eta_{j,i}^2)
=
\Sigma_{-,j}-\Sigma_{j,-j}\Sigma_{-j,-j}^{-1}\Sigma_{-j,j}
=
\frac{1}{\Theta_{j,j}}.
\end{align*}
Thus, defining
\[ C = \left( \begin{array}{cccc}
			1 & -\gamma_{1,2} &  \cdots & -\gamma_{1,p} \\
			-\gamma_{2,1} & 1 & \cdots & -\gamma_{2,p} \\
			\hdots & \hdots & \ddots & \hdots \\
			-\gamma_{p,1}&  -\gamma_{p,2} &  \cdots &  1 \end{array} \right),\]
and $T^2 = diag ( \tau_1^2, \cdots, \tau_p^2)$ we can write $\Theta=T^{-2}C$ using (\ref{nodeaux1}). In Lemma \ref{Node}  we  show that $\hat{\tau}_j^2$ as defined in (\ref{tauhat}) is close to $\tau_j^2$ such that $\hat{\Theta}_j$ is close to $\Theta_j$ when $\hat{\gamma}_j$ is close to $\gamma_j$.

\noindent{\bf Remark:} The above arguments have relied on $X_i$ being i.i.d. such that $\Sigma=E\del[1]{X_iX_i'}$ is constant and does not depend on $i=1,...,n$. At the cost of more involved notation and proofs the arguments above would also be valid in the case of non-identically distributed covariates if we consider $\Sigma=\frac{1}{n}\sum_{i=1}^nE\del[1]{X_iX_i'}$ instead of $E(X_1X_1')$. However, we shall not pursue this generalization here. 

We can now state the asymptotic properties of $\hat{\Theta}$.
\begin{lemma}\label{Node}
Let Assumptions 1 and 2 be satisfied and set $\lambda_{node,n}\asymp \frac{h^{2/r}p^{2/r}}{n^{1/2}}$. Then,
\begin{align}
\max_{j\in H}\| X_{-j} (\hat{\gamma}_j - \gamma_j) \|_n^2 
&=
O_p\del[2]{\frac{ d_{n1}\bar{s}h^{4/r}p^{4/r}}{n}\label{NodeEq3}}.\\
\max_{j\in H}\|\hat{\gamma}_j - \gamma_j \|_1 
&= 
O_p\del[2]{\frac{d_{n2} \bar{s}h^{2/r}p^{2/r}}{n^{1/2}}}\label{NodeEq4}. \\
\max_{j\in H}\envert[0]{\hat{\tau}_j^2-\tau_j^2}
&=
O_p \del[2]{\bar{s}^{1/2} \frac{h^{2/r}p^{2/r}}{\sqrt{n}}}.\\
\max_{j\in H}\enVert[1]{\hat{\Theta}_j - \Theta_j}_1 
&=
O_p \del[2]{ d_{n2}\bar{s} \frac{h^{2/r}p^{2/r}}{\sqrt{n}}}.\label{l1theta}\\
\max_{j\in H}\|\hat{\Theta}_j - \Theta_j \|_2
&=
O_p \del[2]{\sqrt{d_{n1}}\bar{s}^{1/2} \frac{h^{2/r}p^{2/r}}{\sqrt{n}}}.\label{l2theta}\\
\max_{j\in H}\enVert[0]{\hat{\Theta}_j}_1
&=
O_p(\bar{s}^{1/2}).\label{thetahatnorm}
\end{align}
\end{lemma}
\textbf{Remark}. Clearly we see that divergences $d_{n1}$ and $d_{n2}$ between the Lasso and the conservative Lasso influence the upper bounds in the nodewise regressions. The roles of $d_{n1}$ and $d_{n2}$ are explained in detail in Remark 3 after Theorem \ref{thm1}. Clearly we see that the conservative nodewise regression Lasso can have smaller errors in prediction norm, $\ell_1$ and $\ell_2$ errors for estimates than the its Lasso counterpart since $d_{n1}=18$ for the Lasso and as low as nearly 2 for the former. Furthermore, $d_{n2}$ is 24 in the Lasso nodewise regression and as small as almost 4 in conservative Lasso nodewise regression as also explained in the Remarks to Theorem \ref{thm1}.

Lemma \ref{Node} is an auxiliary lemma which will be of great importance in the proof of Theorem \ref{thm2} below. Note that all bounds provided are uniform in $H$ with upper bounds tending to zero even when $h=|H|\to \infty$ as long as this does not happen too fast. (\ref{NodeEq3}) and (\ref{NodeEq4}) reduce to inequalities of the type (\ref{IQ1thm1}) and (\ref{IQ2thm1}) in Theorem \ref{thm1} when $H$ is a singleton such that $h=1$. Note also that (\ref{l1theta}) can be used to bound the estimation error of each row of $\hat{\Theta}$ for the corresponding row of $\Theta$. Thus, choosing $H=\cbr[1]{1,...,p}$, (\ref{l1theta}) provides a bound on $\enVert[0]{\hat{\Theta}-\Theta}_{\ell_\infty}$. Finally, we remark that the uniformity of the above results is crucial for establishing the limiting distribution of $\alpha'\hat{\Theta}X'u/n^{1/2}$ in (\ref{stat}) as well as for estimating the variance of the limiting distribution. %The established uniformity is new compared to the results of  \cite{van2014}.

\begin{proof}[Proof of Lemma \ref{Node}]
We start by establishing the order of magnitude of $\enVert[0]{X_{-j}(\hat{\gamma}_j-\gamma_j)}_n^2$ and $\enVert[0]{\hat{\gamma}_j-\gamma_j}_1$. For concreteness, consider nodewise regression $j$. Define
\begin{align*}
\mathcal{A}_{node}=\cbr[2]{\max_{j\in H}\enVert[0]{X'_{-j}\eta_j}_\infty\leq \lambda_{node,n}/2}\text{ and }\mathcal{B}_{j}=\cbr[2]{\phi^2_{\hat{\Sigma}_{-j}}(s_j)\geq \phi^2_{\Sigma_{-j}}(s_j)/2}. 
\end{align*}
By an exact adaptation of the proof of Lemma \ref{A1} it can be shown for each $j\in H$ that with definition of $d_{n1} = 2 (2 a_n+1)^2$, and $d_{n2}= 4 (a_n+1)(2 a_n +1)$, $0 < a_n \le 1$
\begin{align}
\| X_{-j} (\hat{\gamma}_j - \gamma_j) \|_n^2 
\le 
d_{n1}
\frac{ \lambda_{node,n}^2s_j }{\phi^2_\Sigma (s_j)},\label{IQ1node}\\
\|\hat{\gamma}_j - \gamma_j \|_1 
\le 
d_{n2}\frac{\lambda_{node,n}s_j}{\phi^2_\Sigma (s_j)}\label{IQ2node} 
\end{align}
are valid on the set $\mathcal{A}_{node}\cap \mathcal{B}_j$ for $j \in H$. 

%Note that conservative Lasso nodewise regression has a smaller than or equal prediction errors, and $l_1$ errors compared to the lasso nodewise regression. To see this precisely, with nodewise lasso regression, since $a_n =1$, $d_{n1}=18, d_{n2}=24$. But for the conservative Lasso nodewise regression these numbers are an upper bound since in the case of conservative Lasso nodewise regression $ 2 < d_{n1} \le 18$, and $ 4 < d_{n2} \le 24$, as explained in Remarks to Theorem 1.

Note that (\ref{IQ1node}) and (\ref{IQ2node}) are valid simultaneously for all $j \in H$ on $\mathcal{A}_{node}\cap \del[0]{\cap_{j \in H}\mathcal{B}_j}$ \footnote{It will turn out later that it is quite important that (\ref{IQ1node}) and (\ref{IQ2node}) are valid simultaneously for all $j\in H$ since this will give us a vital uniformity when bounding $\hat{\tau}_j^2$ away from 0. If one is only interested in one nodewise regression the outer maximum in the definition of $\mathcal{A}_{node}$ can be omitted.}. Thus, we establish a lower bound on the probability of this set. First, consider $\mathcal{A}_{node}$. Since $\eta_{j,i}$ is the residual from the $L^2$-projection of $X_{j,i}$ on the linear span of the elements of $X_{-j,i}$ it follows that $E(X_{-j,i}\eta_{j,i})=0$ for all $i=1,...,n$ and all $j\in H$. Furthermore, by the Cauchy-Schwarz inequality, every entry of $X_{-j,i}\eta_{j,i}$ has bounded $r/2$-norm via Assumption 2c. The maximum in the definition of $\mathcal{A}_{node}$ is over $h(p-1)$ terms. Thus, merging constants and choosing $\lambda_{node,n}=M\frac{h^{2/r}p^{2/r}}{\sqrt{n}}$ for some $M>0$, Lemma \ref{conc} yields,
\begin{align*}
P(\mathcal{A}_{node}^c)
=
P\del[2]{\max_{j\in H}\enVert[0]{X_{-j}'\eta_j}_\infty > n\lambda_{node,n}/2}
\leq
hp\frac{b_rC^2n^{r/4}}{(n\lambda_{node,n}/2)^{r/2}}
=
\frac{C}{M^{r/2}}, 
\end{align*}
which also shows that
\begin{align}
\max_{j\in H}\enVert[0]{X_{-j}'\eta_j/n}_\infty 
=
O_p\del[1]{\lambda_{node,n}}
=
O_p\del[2]{\frac{h^{2/r}p^{2/r}}{\sqrt{n}}}\label{NodeNoise}
\end{align}
by choosing $M$ sufficiently large. 

Next, we provide a lower bound on the probability of the set $\cap_{j\in H}\mathcal{B}_j$. We know by Lemma \ref{vdGB} that $\cbr[2]{\enVert[0]{\hat{\Sigma}_{-j}-\Sigma_{-j}}_\infty\leq  \frac{\phi^2_{\Sigma_{-j}}(s_j)}{32s_j}}\subseteq\cbr[1]{\phi^2_{\hat{\Sigma}_{-j}}(s_j)\geq \phi^2_{\Sigma_{-j}}(s_j)/2}=\mathcal{B}_j$. Thus, the relation
\begin{align*}
\enVert[0]{\hat{\Sigma}_{-j}-\Sigma_{-j}}_\infty
\leq
\enVert[0]{\hat{\Sigma}-\Sigma}_\infty 
\leq
\frac{\phi^2_\Sigma(\bar{s})}{32\bar{s}}
\leq
 \frac{\phi^2_{\Sigma_{-j}}(s_j)}{32s_j}
\end{align*}
implies that $\cbr[1]{\enVert[0]{\hat{\Sigma}-\Sigma}_\infty 
\leq
\frac{\phi^2_\Sigma(\bar{s})}{32\bar{s}}}\subseteq\mathcal{B}_j$ for all $j\in H$ and therefore $\cbr[1]{\enVert[0]{\hat{\Sigma}-\Sigma}_\infty 
\leq
\frac{\phi^2_\Sigma(\bar{s})}{32\bar{s}}}\subseteq \cap_{j\in H}\mathcal{B}_j$.

Next, by arguments exactly parallel to those in Lemma \ref{RE1}, it follows that
\begin{align*}
P\del[2]{\del[1]{\cap_{j\in H}\mathcal{B}_j}^c}
\leq
P\del[2]{\enVert[0]{\hat{\Sigma}-\Sigma}_\infty > \frac{\phi^2_\Sigma(\bar{s})}{32\bar{s}}}
\leq
D\frac{p^2\bar{s}^{r/2}}{n^{r/4}}.
\end{align*}
Hence, with probability at least $1-\frac{C}{M^{r/2}}-D\frac{p^2\bar{s}^{r/2}}{n^{r/4}}$
\begin{align}
\| X_{-j} (\hat{\gamma}_j - \gamma_j) \|_n^2 
&\le 
d_{n1} \frac{ \lambda_{node,n}^2s_j}{\phi^2_\Sigma (s_j)}.\label{n1}\\
\|\hat{\gamma}_j - \gamma_j \|_1 
&\le
d_{n2} \frac{\lambda_{node,n}s_j}{\phi^2_\Sigma (s_j)}.\label{n2} 
\end{align}
By choosing $M$ sufficiently large, using $\frac{p^2\bar{s}^{r/2}}{n^{r/4}}\to 0$, and inserting the definition of $\lambda_{node,n}$ (\ref{NodeEq3}) and (\ref{NodeEq4}) follow upon taking the maximum in the above display and utilizing that the above inequalities are all valid simultaneously on $\mathcal{A}_{node,n}\cap\del[1]{\cap_{j\in H}\mathcal{B}_j}$.

We shall also need an upper bound on $\max_{j\in H}\enVert[0]{\hat{\gamma}_j-\gamma_j}_2$ in the proof of Theorem \ref{thm2}. Let $\hat{v}_j$ and $v_j$ be $p\times 1$ vectors containing $0$ in the $j$'th position and the elements of $\hat{\gamma}_j$ and $\gamma_j$, respectively, in the remaining positions in the same order as they appear in $\hat{\gamma}_j$ and $\gamma_j$. Thus, $\max_{j\in H}\enVert[0]{\hat{\gamma}_j-\gamma_j}_2= \max_{j \in H} \enVert[0]{\hat{v}_j - v_j}_2$. Thus,
\[ | (\hat{v}_j - v_j)' \hat{\Sigma} (\hat{v}_j - v_j) - (\hat{v}_j - v_j)' \Sigma (\hat{v}_j - v_j) |
\le 
\|\hat{\Sigma} - \Sigma\|_{\infty} \|\hat{v}_j - v_j \|_1^2\]
such that 
\begin{align}
\max_{j\in H}(\hat{v}_j - v_j)' \Sigma (\hat{v}_j - v_j)
\leq
\max_{j\in H}(\hat{v}_j - v_j)' \hat{\Sigma} (\hat{v}_j - v_j) + \max_{j\in H}\|\hat{\Sigma} - \Sigma\|_{\infty} \|\hat{v}_j - v_j \|_1^2.\label{l2}
\end{align}
Next, we bound each term on the right hand side of the above display. First,
\begin{align*}
\max_{j\in H}(\hat{v}_j - v_j)' \hat{\Sigma} (\hat{v}_j - v_j)
=
\max_{j\in H}\enVert[1]{X(\hat{v}_j-v_j)}_n^2
=
\max_{j\in H}\enVert[1]{X_{-j}(\hat{\gamma}_j-\gamma_j)}_n^2
=
O_p\del[3]{\frac{d_{n1} \bar{s}h^{4/r}p^{4/r}}{n}},
\end{align*}
by (\ref{NodeEq3}). Next, consider the second term in (\ref{l2}). To this end, apply Lemma \ref{conc} and Assumption 1,  for any $t>0$ to get 
\begin{align*}
P\del[2]{\|\hat{\Sigma} - \Sigma\|_{\infty}>t}
=
P\del[3]{\max_{1\leq k,l\leq p}\envert[2]{\frac{1}{n}\sum_{i=1}^n\del[1]{X_{k,i}X_{l,i}-E(X_{k,i}X_{l,i})}}>t}
\leq 
b_{r/2}\frac{p^2n^{r/4}C}{(tn)^{r/2}}.
\end{align*}
Thus, choosing $t=M\frac{p^{4/r}}{n^{1/2}}$ for $M>0$ sufficiently large yields
\begin{equation}
\|\hat{\Sigma} - \Sigma\|_{\infty}
=
O_p\del[3]{\frac{p^{4/r}}{n^{1/2}}}.\label{shat}
\end{equation}
In combination with (\ref{NodeEq4}) this implies (using $\enVert[0]{\hat{\gamma}_j-\gamma_j}_1=\enVert[0]{\hat{v}_j - v_j}_1$)
\begin{align*}
\max_{j\in H}\|\hat{\Sigma} - \Sigma\|_{\infty} \|\hat{v}_j - v_j \|_1^2
=
O_p\del[3]{\frac{p^{4/r}}{n^{1/2}}}O_p\del[3]{\frac{d_{n2}^2 \bar{s}^2h^{4/r}p^{4/r}}{n}}
=
O_p\del[3]{\frac{d_{n2}^2 \bar{s}^2h^{4/r}p^{8/r}}{n^{3/2}}}.
\end{align*}
But since $d_{n2}^2$ is bounded by constants
\begin{align*}
O_p\del[3]{\frac{d_{n2}^2 \bar{s}^2h^{4/r}p^{8/r}}{n^{3/2}}}
=
O_p\del[3]{d_{n2}^2 \frac{\bar{s}p^{4/r}}{n^{1/2}}\frac{\bar{s}h^{4/r}p^{4/r}}{n}}
=
o_p\del[3]{\frac{\bar{s}h^{4/r}p^{4/r}}{n}},
\end{align*}
as $\frac{\bar{s}p^{4/r}}{n^{1/2}}=\del[2]{\frac{p^2 \bar{s}^{r/2}}{n^{r/4}}}^{2/r}\to 0$ by Assumption 2b) we conclude
\begin{align*}
\max_{j\in H}(\hat{v}_j - v_j)' \Sigma (\hat{v}_j - v_j)
\le
O_p\del[3]{\frac{d_{n1} \bar{s}h^{4/r}p^{4/r}}{n}}.
\end{align*}
Therefore, by
\begin{align*}
\max_{j\in H}\phi_{\min}(\Sigma)\enVert[0]{\hat{v}_j-v_j}_2^2
\leq
\max_{j\in H}(\hat{v}_j - v_j)' \Sigma (\hat{v}_j - v_j)
\leq
O_p\del[3]{d_{n1} \frac{\bar{s}h^{4/r}p^{4/r}}{n}},
\end{align*}
one gets

\begin{equation}
\max_{j\in H}\enVert[0]{\hat{\gamma}_j-\gamma_j}_2^2
=
\max_{j\in H}\enVert[0]{\hat{v}_j-v_j}_2^2
=
O_p\del[3]{d_{n1} \frac{\bar{s}h^{4/r}p^{4/r}}{n}}.\label{eqg2}
\end{equation}
since $\phi_{\min}(\Sigma)$ is bounded away from zero by Assumption 2a).

Next, we consider $\envert[0]{\hat{\tau}_j^2-\tau_j^2}$. First, by (\ref{2.5}) and $X_j = X_{-j} \gamma_j + \eta_j$,
\begin{eqnarray*}
\hat{\tau}_j^2 & = & \frac{(X_j - X_{-j} \hat{\gamma}_j)' X_j}{n} \\
& = & \frac{[\eta_j - X_{-j} (\hat{\gamma}_j - \gamma_j)]'[X_{-j} \gamma_j + \eta_j]}{n} \\
& = & \frac{\eta_j' \eta_j}{n} + \frac{\eta_j' X_{-j} \gamma_j}{n} - \frac{(\hat{\gamma}_j - \gamma_j)' X_{-j}' X_{-j} \gamma_j}{n} - \frac{(\hat{\gamma}_j - \gamma_j )' X_{-j}' \eta_j}{n}.
\end{eqnarray*}
Using the above expression one gets
\begin{align}
\max_{j\in H} |\hat{\tau}_j^2 - \tau_j^2|
& \le 
\max_{j\in H}\envert[2]{\frac{\eta_j' \eta_j}{n} - \tau_j^2} + \max_{j\in H}|\eta_j' X_{-j} (\hat{\gamma}_j - \gamma_j)/n|\notag\\
 &+ \max_{j\in H}|\eta_j' X_{-j} \gamma_j/n| + \max_{j\in H}\envert[3]{\frac{\gamma_j' X_{-j}' X_{-j} (\hat{\gamma}_j - \gamma_j)}{n}}\label{fourterms}.
\end{align}
Since $\frac{\eta_j' \eta_j}{n} - \tau_j^2=\frac{1}{n}\sum_{i=1}^n\del[1]{\eta_{j,i}^2-E(\eta^2_{j,i})}$ is a sum of mean zero terms with $r/2$ moments uniformly bounded by a constant $C$ (the latter is seen by means of the Cauchy-Schwarz inequality and Assumption 2c) it follows from Lemma \ref{conc}  
\begin{align*}
P\del[3]{\max_{j\in H}\envert[2]{\frac{\eta_j' \eta_j}{n} - \tau_j^2}>Mh^{2/r}/n^{1/2}}
=
P\del[2]{\max_{j\in H}\envert[1]{\frac{1}{n}\sum_{i=1}^n\del[1]{\eta_{j,i}^2-E(\eta^2_{j,i})}}>Mh^{2/r}/n^{1/2}}
\leq
\frac{b_rC}{M^{r/2}},
\end{align*}
which implies that
\begin{equation}
\max_{j\in H}\envert[2]{\frac{\eta_j' \eta_j}{n} - \tau_j^2}=O_p\del[2]{\frac{h^{2/r}}{n^{1/2}}}.\label{l21}
\end{equation}
Next, consider the second term in (\ref{fourterms}). By (\ref{NodeEq4}) and (\ref{NodeNoise}) it follows that
\begin{eqnarray}
\max_{j\in H}|\eta_j' X_{-j} (\hat{\gamma}_j - \gamma_j)/n| 
&\le &
\max_{j\in H}\| \eta_j' X_{-j}/n \|_{\infty} \max_{j\in H}\|\hat{\gamma}_j - \gamma_j \|_1 \nonumber \\
&=&
O_p\del[3]{\frac{h^{2/r}p^{2/r}}{\sqrt{n}}}O_p\del[3]{\frac{d_{n2} \bar{s}h^{2/r}p^{2/r}}{\sqrt{n}}} \nonumber \\
&=&
O_p\del[4]{\sbr[3]{\sqrt{d_{n2}} \bar{s}^{1/2}\frac{h^{2/r}p^{2/r}}{\sqrt{n}}}^2}.\label{l22}
\end{eqnarray}
Before we bound the third term in (\ref{fourterms}) we show that $\max_{j\in H}\enVert[0]{\gamma_j}_1=O(\sqrt{\bar{s}})$. To this end, define the $(p-1)\times (p-1)$ matrix $\Sigma_{-j}$ consisting of all rows and columns of $\Sigma$ except the $j$'th row and column. Then, note that
\begin{align*}
\frac{\gamma_j'\Sigma_{-j}\gamma_j}{\gamma_j'\gamma_j}
\geq
\phi_{\min}(\Sigma_{-j})
\geq 
\phi_{\min}(\Sigma),
\end{align*}
such that
\begin{align*}
\gamma_{j}'\gamma_{j}
\leq
\frac{\gamma_{j}'\Sigma_{-j,-j}\gamma_{j}}{\phi_{\min}(\Sigma)}.
\end{align*}
Since $X_{j,i}=X_{-j,i}\gamma_j+\eta_{j,i}$ it follows from the orthogonality in $L^2$ of each entry in $X_{-j,i}$ to $\eta_{j,i}$ that $E(X_{j,i}^2)=\gamma_j'\Sigma_{-j}\gamma_j+E(\eta_{j,i}^2)$ such that $\gamma_{j}'\Sigma_{-j}\gamma_{j}\leq E(X_{j,i}^2)\leq \max_{j\in H}E(X_{j,i}^2)$. Since $\del[1]{E(X_{j,i}^2)}^{1/2}\leq \del[1]{E(X_{j,i}^r)}^{1/r}\leq C^{1/r}$ for all $j\in H$ one has $\max_{j\in H}E(X_{j,i}^2)\leq C^{2/r}$. Hence, 
\begin{equation}
\gamma_{j}'\gamma_{j}\leq \frac{C^{2/r}}{\phi_{\min}(\Sigma)}.\label{eqgj2}
\end{equation} 
Thus, by Assumption 2a), $\gamma_{j}'\gamma_{j}$ is bounded by a constant not depending on $j$ which implies that $\max_{j\in H}\enVert[0]{\gamma_j} _1=O(\sqrt{\bar{s}})$. Hence, returning to the third term of (\ref{fourterms}),
\begin{equation}
\max_{j\in H}|\eta_j' X_{-j} \gamma_j/n| 
\le
\max_{j\in H} \|\eta_j' X_{-j}/n \|_{\infty} \max_{j\in H}\|\gamma_j \|_1 
= 
O_p\del[2]{\sqrt{\bar{s}}\frac{h^{2/r}p^{2/r}}{\sqrt{n}}},\label{l23}
\end{equation}
where we have also used (\ref{NodeNoise}). It remains to bound the fourth summand in (\ref{fourterms}). By the Karush-Kuhn-Tucker conditions for the conservative lasso nodewise regression one has
\begin{align*}
\lambda_{node,n} \hat{\Gamma}_j \hat{\kappa}_j + \frac{ X_{-j}' X_{-j} \hat{\gamma}_j}{n} - \frac{X_{-j}' X_{j} }{n} =0, 
\end{align*} 
which, using $X_j = X_{-j} \gamma_j + \eta_j$, is equivalent to
\begin{align*}
\lambda_{node,n} \hat{\Gamma}_j \hat{\kappa}_j + \frac{X_{-j}'X_{-j}  \hat{\gamma}_j }{n} - \frac{X_{-j}' \eta_j}{n} - \frac{X_{-j}' X_{-j} \gamma_j}{n} =0.
\end{align*} 
The above equation can be rewritten as
\[ \frac{X_{-j}' X_{-j}}{n} (\hat{\gamma}_j - \gamma_j) = \frac{X_{-j}' \eta_j}{n} - \lambda_{node,n} \hat{\Gamma}_j \hat{\kappa}_j.\]
This implies
\begin{align*}
\enVert[3]{ \frac{X_{-j}' X_{-j}}{n} (\hat{\gamma}_j - \gamma_j)}_{\infty} 
\le 
\enVert[3]{\frac{X_{-j}' \eta_j}{n}}_{\infty} + \|  \lambda_{node,n} \hat{\Gamma}_j \hat{\kappa}_j\|_{\infty} .
\end{align*}
The second term on the right hand side in the above display can be bounded as
\begin{align*}
\|  \lambda_{node,n} \hat{\Gamma}_j \hat{\kappa}_j\|_{\infty} 
\le 
\|  \lambda_{node,n} \hat{\Gamma}_j\|_{\ell_\infty}  \|\hat{\kappa}_j\|_{\infty} 
\leq
\lambda_{node,n},
\end{align*}
for all $j\in H$ since $\|\hat{\kappa}_j \|_{\infty} \le 1$ and $\| \hat{\Gamma}_j\|_{\ell_\infty}\leq 1$. Hence, using (\ref{NodeNoise}),
\begin{align*}
\max_{j\in H}\enVert[3]{\frac{X_{-j}' X_{-j}}{n} (\hat{\gamma}_j - \gamma_j)}_{\infty}
=
O_p(\lambda_{node,n})+O_p(\lambda_{node,n})
=
O_p\del[2]{\frac{h^{2/r}p^{2/r}}{\sqrt{n}}}
\end{align*}
This means, using $\max_{j\in H}\|\gamma_j \|_1 = O(\bar{s}^{1/2})$,
\begin{equation}
\max_{j\in H}\envert[2]{\gamma_j' \frac{X_{-j}' X_{-j}}{n} (\hat{\gamma}_j - \gamma_j ) } 
= 
O_p \del[2]{\bar{s}^{1/2} \frac{h^{2/r}p^{2/r}}{\sqrt{n}}}.\label{l24}
\end{equation}
Since $h\leq p$, Assumption 2b) implies that
\[\bar{s}^{1/2} \frac{h^{2/r}p^{2/r}}{\sqrt{n}}\leq\bar{s}^{1/2} \frac{p^{4/r}}{\sqrt{n}}= \frac{1}{\bar{s}^{1/2}}\del[3]{\frac{\bar{s}^{r/2} p^2}{n^{r/4}}}^{2/r}\to 0,\]
such that the dominant term in (\ref{fourterms}) is $O_p \del[2]{\bar{s}^{1/2} \frac{h^{2/r}p^{2/r}}{\sqrt{n}}}$ given $d_{n2}$. Thus, 
\[\max_{j\in H}\envert[0]{\hat{\tau}_j^2-\tau_j^2}=O_p (\bar{s}^{1/2} \frac{h^{2/r}p^{2/r}}{n^{1/2}}).\] 
Next, note that $\tau_j^2 = 1/\Theta_{j,j}\geq 1/\phi_{\max}(\Theta)=\phi_{\min}(\Sigma)$ for all $j=1,...,p$ with $\phi_{\min}(\Sigma)$ bounded away from zero by Assumption 2. Thus, $\min_{1\leq j\leq p}\tau_j^2$ is bounded away from zero, and so 
\begin{align*}
\min_{1\le j\le p} \hat{\tau}_j^2 = \min_{1 \le j \le p} [\hat{\tau}_j^2 - \tau_j^2 + \tau_j^2] 
\ge \min_{1\le j \le p} \tau_j^2 - \max_{1 \le j \le p} |\hat{\tau}_j^2 - \tau_j^2|
\end{align*}
is bounded away from zero with probability tending to one using $\max_{j\in H}\envert[0]{\hat{\tau}_j^2-\tau_j^2}=O_p \del[2]{\bar{s}^{1/2} \frac{h^{2/r}p^{2/r}}{\sqrt{n}}}=o_p(1)$. This implies
\begin{align}
\max_{j\in H}\envert[3]{\frac{1}{\hat{\tau}_j^2 } - \frac{1}{\tau_j^2}} 
=
\max_{j\in H}\frac{|\tau_j^2 - \hat{\tau}_j^2|}{\hat{\tau}_j^2 \tau_j^2} 
=
O_p \del[2]{\bar{s}^{1/2} \frac{h^{2/r}p^{2/r}}{\sqrt{n}}}.\label{pt32.3i}
\end{align}

We are now ready to bound $\max_{j\in H}\| \hat{\Theta}_j - \Theta_j \|_1$. Recall that $\hat{\Theta}_j$ is formed by dividing $\hat{C}_j$ by $\hat{\tau}_j^2$.  Let $\Theta_j$ denote the $j$'th row of $\Theta$ written as a column vector. Then, $\Theta_j$ is formed by dividing $C_j$ ($j$'th row of $C$ written as a column vector) by $\tau_j^2$. Therefore, using $\max_{j\in H}\| \gamma_j \|_1 = O(\bar{s}^{1/2})$, (\ref{NodeEq4}), and (\ref{pt32.3i})
\begin{align}
\max_{j\in H}\enVert[1]{\hat{\Theta}_j - \Theta_j}_1 
&=
\max_{j\in H}\enVert[3]{\frac{\hat{C}_j}{\hat{\tau}_j^2} - \frac{C_j}{\tau_j^2}}_1\\
&\le
\max_{j\in H}\envert[3]{\frac{1}{\hat{\tau}_j^2} - \frac{1}{\tau_j^2}}+ \max_{j\in H}\enVert[3]{\frac{\hat{\gamma}_j}{\hat{\tau}_j^2} - \frac{\gamma_j}{\tau_j^2}}_1 \nonumber \\
& =
\max_{j\in H}\envert[3]{\frac{1}{\hat{\tau}_j^2} - \frac{1}{\tau_j^2}} + \max_{j\in H}\enVert[3]{\frac{\hat{\gamma}_j}{\hat{\tau}_j^2} - \frac{\gamma_j}{\hat{\tau}_j^2} + \frac{\gamma_j}{\hat{\tau}_j^2} -\frac{\gamma_j}{\tau_j^2} }_1 \nonumber \\
& 
\leq
\max_{j\in H}\envert[3]{\frac{1}{\hat{\tau}_j^2} - \frac{1}{\tau_j^2}} + \max_{j\in H}  \frac{\|\hat{\gamma}_j - \gamma_j \|_1}{\hat{\tau}_j^2} + \max_{j\in H}
\|\gamma_j \|_1 \max_{j \in H} \left(\envert[3]{ \frac{1}{\hat{\tau}_j^2} - \frac{1}{\tau_j^2} }\right) \nonumber \\
& = 
 O_p \del[2]{\bar{s}^{1/2} \frac{h^{2/r}p^{2/r}}{\sqrt{n}}}+O_p\del[2]{\frac{d_{n2} \bar{s}h^{2/r}p^{2/r}}{\sqrt{n}}} +O_p \del[2]{\bar{s} \frac{h^{2/r}p^{2/r}}{\sqrt{n}}} \nonumber \\
& =
O_p\del[2]{\frac{ d_{n2}\bar{s}h^{2/r}p^{2/r}}{\sqrt{n}}}.\label{pt32c.1i}
\end{align}
Next, for later purposes, we also bound $\|\hat{\Theta}_j - \Theta_j \|_2$. By (\ref{eqg2}), and $\max_{j \in H} \|\gamma_j \|_2^2 = O(1)$ by (\ref{eqgj2})
\begin{align}
\max_{j\in H}\| \hat{\Theta}_j - \Theta_j \|_2 
&\le
 \max_{j\in H}\envert[3]{\frac{1}{\hat{\tau}_j^2} - \frac{1}{\tau_j^2}} + \max_{j\in H}\frac{ \| \hat{\gamma}_j - \gamma_j \|_2}{\hat{\tau}_j^2}
+ \max_{j\in H}\| \gamma_j \|_2  \max_{j \in H} \del[3]{\envert[2]{\frac{1}{\hat{\tau}_j^2} - \frac{1}{\tau_j^2}}} \nonumber \\
& =  
O_p \del[2]{\bar{s}^{1/2} \frac{h^{2/r}p^{2/r}}{\sqrt{n}}}+ O_p\del[3]{\frac{\sqrt{d_{n1}} \bar{s}^{1/2}h^{2/r}p^{2/r}}{n^{1/2}}} + O_p \del[2]{\bar{s}^{1/2} \frac{h^{2/r}p^{2/r}}{\sqrt{n}}},\nonumber \\
& =
O_p \del[2]{\sqrt{d_{n1}} \bar{s}^{1/2} \frac{h^{2/r}p^{2/r}}{\sqrt{n}}}.\label{pt32d.1i}
\end{align}
Finally, we show that $\max_{j\in H}\| \hat{\Theta}_j \|_1=O_p(\sqrt{\bar{s}})$. To this end, 
\begin{align}
\max_{j\in H}\enVert[0]{\Theta_j}_1\leq\max_{j\in H}\frac{1}{\tau_j^2}+\max_{j\in H}\enVert[0]{\gamma_j/\tau_j^2}_1
=
O(\bar{s}^{1/2})\label{thetaj}
\end{align}
(as $\tau_j^2$ is uniformly bounded away from zero). Then, as $h\leq p$ implies $\frac{\bar{s} h^{2/r} p^{2/r}}{n^{1/2}} \leq [ p^2 \bar{s}^{r/2}/n^{r/4}]^{2/r}\to 0$ by Assumption 2b, we get
\begin{align} 
\max_{j\in H}\| \hat{\Theta}_j \|_1 
\le \max_{j\in H}\| \hat{\Theta}_j - \Theta_j \|_1 + \max_{j\in H}\| \Theta_j \|_1
=
O_p\del[2]{\frac{d_{n2} \bar{s}h^{2/r}p^{2/r}}{n^{1/2}}}+O(\sqrt{\bar{s}})
=
O_p(\sqrt{\bar{s}})\label{sbar}.
\end{align}
\end{proof}

\begin{proof}[Proof of Theorem \ref{thm2}]
We show that the ratio
\begin{equation}
t = \frac{n^{1/2} \alpha'(\hat{b}- \beta_{0})}{\sqrt{\alpha'\hat{\Theta} \hat{\Sigma}_{xu} \hat{\Theta}'\alpha}},\label{t.1}
\end{equation}
is asymptotically standard normal. First, note that one can write. By (\ref{stat})
\[ t = t_1 + t_2,\]
where
\begin{align*}
t_1 = \frac{\alpha'\hat{\Theta} X' u /n^{1/2}}{\sqrt{\alpha'\hat{\Theta} \hat{\Sigma}_{xu} \hat{\Theta}'\alpha}} \text{ and }
t_2 = -\frac{ \alpha'\Delta}{\sqrt{\alpha'\hat{\Theta} \hat{\Sigma}_{xu} \hat{\Theta}'\alpha}}.
\end{align*}
It suffices to show that $t_1$ is asymptotically standard normal and $t_2=o_p(1)$.

\noindent{\bf Step 1}. We first show that $t_1$ is asymptotically standard normal. 

{\bf a)} To show that $t_1$ is asymptotically standard normal we first show that
\[ t_1'=\frac{\alpha'\Theta X'u/n^{1/2}}{\sqrt{\alpha'\Theta\Sigma_{xu}\Theta'\alpha}}\]
converges in distribution to a standard normal where $\Sigma_{xu} = n^{-1} \sum_{i=1}^nE(X_i X_i' u_i^2)$. Then we show that $t_1'$ and $t_1$ are asymptotically equivalent. Note that, using $E(u_i|X_i)=0$ for all $i=1,...,n$, we obtain
\begin{equation}
E \left[ \frac{\alpha'\Theta X'u/n^{1/2}}{\sqrt{\alpha'\Theta\Sigma_{xu}\Theta'\alpha}} \right] 
=
E \left[ \frac{\alpha'\Theta \sum_{i=1}^nX_iu_i/n^{1/2}}{\sqrt{\alpha'\Theta\Sigma_{xu}\Theta'\alpha}} \right] 
=
0,\label{4.0}
\end{equation}
and
\begin{align*}
E \left[ \frac{\alpha'\Theta X'u/n^{1/2}}{\sqrt{\alpha'\Theta\Sigma_{xu}\Theta'\alpha}}  \right] ^2
=
E \left[ \frac{\alpha'\Theta \sum_{i=1}^nX_iu_i/n^{1/2}}{\sqrt{\alpha'\Theta\Sigma_{xu}\Theta'\alpha}} \right]^2 
=
1.
\end{align*}
Hence, in order to apply Lyapounov's condition in  central limit theorem for independent random variables, it suffices to show that
\begin{align}
\frac{1}{\del[1]{\alpha'\Theta\Sigma_{xu}\Theta'\alpha}^{r/4}}\sum_{i=1}^nE\envert[1]{\alpha'\Theta X_iu_i/n^{1/2}}^{r/2}\to 0\label{Lyap}.
\end{align}
First, using the symmetry of $\Theta$, we get (recall that $\Theta_j$ is the $j$'th row of $\Theta$ written as a column vector)
\begin{align*}
\enVert[1]{\alpha'\Theta}_1
=
\enVert[1]{\Theta\alpha}_1
=
\enVert[4]{\sum_{j\in H}\Theta_j\alpha_j}_1
\leq
\sum_{j\in H}|\alpha_j|\enVert[1]{\Theta_j}_1
=
O\del[2]{\sqrt{h\bar{s}}},
\end{align*}
since $\enVert[0]{\alpha}_2=1$ and $\max_{j\in H}\enVert[0]{\Theta_j}_1=O(\sqrt{\bar{s}})$ by (\ref{thetaj}). Note also that
\begin{align*}
\alpha'\Theta
=
\del[1]{\Theta\alpha}'
=
\del[4]{\sum_{j\in H}\Theta_j\alpha_j}'
\end{align*}
such that the non-zero entries of $\alpha'\Theta$ must be contained in $\bar{S}=\cup_{j\in H}S_j$ which has cardinality at most $|\bar{S}|=h\bar{s}\wedge p$, where $S_j=\cbr[0]{\Theta_{j,i}\neq 0}$. Thus,
\begin{align*}
E\envert[1]{\alpha'\Theta X_iu_i/n^{1/2}}^{r/2}
&\leq
E\del[3]{\enVert[1]{\alpha'\Theta}_1^{r/2}\max_{k\in \bar{S}}\envert[1]{X_{k,i}u_i/n^{1/2}}^{r/2}}\\
&\leq
O\del{\del{\frac{h\bar{s}}{n}}^{r/4}}\del[1]{h\bar{s}\wedge p}\max_{k\in \bar{S}}E|X_{k,i}u_i|^{r/2}\\
&\leq
O\del[4]{\del{\frac{h\bar{s}}{n}}^{r/4}\del[1]{h\bar{s}\wedge p}}\\
&=
O\del{\frac{(h\bar{s})^{r/4+1}\wedge (h\bar{s})^{r/4}p}{n^{r/4}}},
\end{align*}
where the third inequality follows from the Cauchy-Schwarz inequality and using that $X_{k,i}$ and $u_i$ have uniformly bounded $r'th$ moments. Hence,
\begin{align*}
\sum_{i=1}^nE\envert[1]{\alpha'\Theta X_iu_i/n^{1/2}}^{r/2}
=
O\del{\frac{(h\bar{s})^{r/4+1}\wedge (h\bar{s})^{r/4}p}{n^{r/4-1}}} = o(1),
\end{align*}
by Assumption 3d). Next, we show that $\alpha'\Theta \Sigma_{xu} \Theta'\alpha$ is asymptotically bounded away from zero in (\ref{Lyap}). Clearly,
\begin{align}
\alpha'\Theta \Sigma_{xu} \Theta'\alpha
\geq
\phi_{\min}(\Sigma_{xu})\enVert[0]{\Theta'\alpha}_2^2
\geq
\phi_{\min}(\Sigma_{xu})\phi_{\min}^2(\Theta)\enVert[0]{\alpha}_2^2
=
\phi_{\min}(\Sigma_{xu})\frac{1}{\phi_{\max}^2(\Sigma)},
\label{lowbound}
\end{align} 
which is bounded away from zero since $\phi_{\min}(\Sigma_{xu})$ is bounded away from zero and $\phi_{\max}(\Sigma)$ is bounded from above. Hence, the Lyapounov condition is satisfied and $t_1'$ converges in distribution to a standard normal.

{\bf b)} We now show that $t_1'-t_1=o_p(1)$. To do so it suffices that the numerators as well as the denominators of $t_1'$ and $t_1$ are asymptotically equivalent since $\alpha'\Theta\Sigma_{xu} \Theta'\alpha$ is bounded away from 0 by (\ref{lowbound}). We first show that the denominators of $t_1'$ and $t_1$ are asymptotically equivalent, i.e.
\begin{equation}
| \alpha'\hat{\Theta}\hat{\Sigma}_{xu} \hat{\Theta}'\alpha - \alpha'\Theta \Sigma_{xu} \Theta'\alpha | = o_p (1).\label{t.2}
\end{equation}
Set $\tilde{\Sigma}_{xu} = n^{-1} \sum_{i=1}^n X_i X_i' u_i^2$. To establish (\ref{t.2}) it suffices to show the following relations:
\begin{equation}
| \alpha'\hat{\Theta} \hat{\Sigma}_{xu} \hat{\Theta}'\alpha - \alpha'\hat{\Theta} \tilde{\Sigma}_{xu} \hat{\Theta}'\alpha| = o_p(1).\label{t.3}
\end{equation}
\begin{equation}
| \alpha'\hat{\Theta}\tilde{\Sigma}_{xu} \hat{\Theta}'\alpha - \alpha\hat{\Theta} \Sigma_{xu} \hat{\Theta}'\alpha| = o_p(1).\label{t.4}
\end{equation}
\begin{equation}
| \alpha'\hat{\Theta} \Sigma_{xu} \hat{\Theta}'\alpha - \alpha'\Theta\Sigma_{xu} \Theta'\alpha| = o_p(1).\label{t.5}
\end{equation} 
We first prove (\ref{t.3}). 
 \begin{equation}
| \alpha'\hat{\Theta} \hat{\Sigma}_{xu} \hat{\Theta}'\alpha - \alpha'\hat{\Theta} \tilde{\Sigma}_{xu} \hat{\Theta}'\alpha|
\le 
\| \hat{\Sigma}_{xu} - \tilde{\Sigma}_{xu}\|_{\infty} \|\hat{\Theta}'\alpha\|_1^2
.\label{t.5a}
\end{equation}
But by (\ref{sbar}) and $\enVert[0]{\alpha}_2=1$
\begin{equation}
\enVert[1]{\hat{\Theta}'\alpha}_1
=
\enVert[4]{\sum_{j\in H}\hat{\Theta}_j\alpha_j}_1
\leq
\sum_{j\in H}|\alpha_j|\enVert[1]{\hat{\Theta}_j}_1
=
O_p\del[1]{\sqrt{h\bar{s}}}.
\label{t.6}
\end{equation}
To proceed, we bound $\enVert[0]{\hat{\Sigma}_{xu} - \tilde{\Sigma}_{xu}}_{\infty}$. Using $\hat{u}_i = u_i - X_i' (\hat{\beta} - \beta_0)$ in the definition of $\hat{\Sigma}_{xu}$ we get 
\begin{equation}
\hat{\Sigma}_{xu} - \tilde{\Sigma}_{xu} 
=
- \frac{2}{n} \sum_{i=1}^n X_i X_i' u_i X_i' (\hat{\beta} - \beta_0) + \frac{1}{n} \sum_{i=1}^n X_i X_i' (\hat{\beta} - \beta_0)' X_i X_i' (\hat{\beta} - \beta_0).\label{t.7}
\end{equation}
We bound each sum separately. First, by the Cauchy-Schwarz inequality, 
\begin{align}
\max_{1\leq k,l\leq p}\envert[3]{\frac{2}{n}\sum_{i=1}^nX_{k,i}X_{l,i}u_iX_i'(\hat{\beta}-\beta_0)}
\leq
2\sqrt{\max_{1\leq k,l\leq p}\frac{1}{n}\sum_{i=1}^nX_{k,i}^2X_{l,i}^2u_i^2}\cdot\enVert[1]{X(\hat{\beta}-\beta_0)}_n.\label{CS1}
\end{align}
Now for any three random variables $Z_1,Z_2$ and $Z_3$ with finite $r$'th moment it follows from two applications of Hölder's inequality
\begin{align}
E|Z_1^2Z_2^2Z_3^2|^{r/6}
=
E|Z_1^{r/3}Z_2^{r/3}Z_3^{r/3}|
&\leq
E\del[1]{|Z_1|^{r/2}|Z_2|^{r/2}}^{2/3}E\del[1]{|Z_3^r|}^{1/3}\notag\\
&\leq
E\del[1]{|Z_1^r|}^{1/3}E\del[1]{|Z_2^r|}^{1/3}E\del[1]{|Z_3^r|}^{1/3}.\label{holder}
\end{align}
Thus, by Assumption 1, all summands in (\ref{CS1}) have uniformly bounded $r/6$ moments and therefore Lemma \ref{conc} implies that
\begin{align*}
P\del[3]{\max_{1\leq k,l\leq p}\envert[3]{\frac{1}{n}\sum_{i=1}^n\del[2]{X_{k,i}^2X_{l,i}^2u_i^2-E(X_{k,i}^2X_{l,i}^2u_i^2)}}>t}
\leq
b_{r/6}\frac{Cp^2n^{r/12}}{(tn)^{r/6}}.
\end{align*}
Hence, choosing $t=M\frac{p^{12/r}}{n^{1/2}}$ for $M>0$ sufficiently large shows that
\begin{align*}
\max_{1\leq k,l\leq p}\envert[3]{\frac{1}{n}\sum_{i=1}^n\del[2]{X_{k,i}^2X_{l,i}^2u_i^2-E(X_{k,i}^2X_{l,i}^2u_i^2)}}
=
O_p\del[2]{\frac{p^{12/r}}{n^{1/2}}}.
\end{align*}
Furthermore, since the $L^r$-norm is non-decreasing in $r$ and since $r\ge6$ we have, using (\ref{holder}) above,
\begin{align*}
\max_{1\leq k,l\leq p}\frac{1}{n}\sum_{i=1}^{n}E\del[1]{X_{k,i}^2X_{l,i}^2u_i^2}
&\leq
\max_{1\leq k,l\leq p}\frac{1}{n}\sum_{i=1}^{n}\del[2]{E\del[1]{X_{k,i}^2X_{l,i}^2u_i^2}^{r/6}}^{6/r}\\
&\leq
\max_{1\leq k,l\leq p}\frac{1}{n}\sum_{i=1}^{n}\sbr[2]{\del[1]{E|X_{k,i}|^r}^{1/3}\del[1]{E|X_{l,i}|^r}^{1/3}\del[1]{E|u_{i}|^r}^{1/3}}^{6/r},
\end{align*}
which is uniformly bounded by Assumption 1 since the $r$'th moments of $X_{k,i}$ and $u_i$ are uniformly bounded. Therefore, $\sqrt{\max_{1\leq k,l\leq p}\frac{1}{n}\sum_{i=1}^nX_{k,i}^2X_{l,i}^2u_i^2}=O(1)+O_p\del[1]{\frac{p^{6/r}}{n^{1/4}}}$ in (\ref{CS1}). By Theorem \ref{thm1} it follows from choosing $M$ sufficiently large
\begin{align}
\enVert[1]{X(\hat{\beta}-\beta_0)}_n=O_p\del[3]{\frac{\sqrt{d_{n1}}p^{2/r}\sqrt{s_0}}{n^{1/2}} }.\label{unif1}
\end{align}
Thus,
\begin{align}
\max_{1\leq k,l\leq p}\envert[3]{\frac{2}{n}\sum_{i=1}^nX_{k,i}X_{l,i}u_iX_i'(\hat{\beta}-\beta_0)}
=
O_p\del[3]{\frac{p^{8/r}\sqrt{s_0}}{n^{3/4}}}+O_p\del[3]{\frac{\sqrt{d_{n1}}p^{2/r}\sqrt{s_0}}{n^{1/2}}}.
\label{a}
\end{align}
Regarding the second term in (\ref{t.7}) note that
\begin{align}
\max_{1\leq k,l \leq p}\envert[3]{\frac{1}{n} \sum_{i=1}^n X_{k,i} X_{l,i} (\hat{\beta} - \beta_0)' X_i X_i' (\hat{\beta} - \beta_0)}
\leq
\max_{1\leq k,l \leq p}\max_{1\leq i \leq n}\envert[1]{X_{k,i}X_{l,i}}\frac{1}{n}\sum_{i=1}^n\del[1]{X_i'(\hat{\beta}-\beta_0)}^2.\label{Bound2}
\end{align}
By the Cauchy-Schwarz inequality, $X_{k,i}X_{l,i}$ has uniformly bounded $r/2$ moments. Hence, by the union bound and Markov's inequality, for any $t>0$ we get via Lemma \ref{conc}
\begin{align*}
P\del[3]{\max_{1\leq i\leq n}\max_{1\leq k,l\leq p}\envert[2]{X_{k,i} X_{l,i}}>t}
\leq np^2\frac{C}{t^{r/2}}.
\end{align*} 
Therefore, choosing $t=Mp^{4/r}n^{2/r}$ for $M>0$ sufficiently large reveals that 
\begin{align*}
\max_{1\leq i\leq n}\max_{1\leq k,l\leq p}\envert[1]{ X_{k,i} X_{l,i}}
=
O_p\del[2]{p^{4/r}n^{2/r}}.
\end{align*}
Next, note that by Theorem 1
\begin{align}
\frac{1}{n}\sum_{i=1}^n\del[1]{X_i'(\hat{\beta}-\beta_0)}^2
=
\enVert[1]{X(\hat{\beta}-\beta_0)}_n^2
=
O_p\del[3]{d_{n1} \frac{p^{4/r}s_0}{n} },\label{unif2}
\end{align}
such that, using (\ref{Bound2}), 
\begin{align}
\max_{1\leq k,l \leq p}\envert[3]{\frac{1}{n} \sum_{i=1}^n X_{k,i} X_{l,i} (\hat{\beta} - \beta_0)' X_i X_i' (\hat{\beta} - \beta_0)}
&=
O_p\del[2]{p^{4/r}n^{2/r}}O_p\del[3]{\frac{d_{n1} p^{4/r}s_0}{n}}\notag\\
&=
O_p\del[3]{\frac{d_{n1} p^{8/r}s_0}{n^{(r-2)/r}}}\label{b}.
\end{align}
Then, combining (\ref{a}) and (\ref{b}) implies that
\begin{align*}
\enVert[1]{\hat{\Sigma}_{xu} - \tilde{\Sigma}_{xu}}_\infty
=
O_p\del[3]{\frac{p^{8/r}\sqrt{s_0}}{n^{3/4}}}+O_p\del[3]{\frac{\sqrt{d_{n1}} p^{2/r}\sqrt{s_0}}{n^{1/2}}}
+O_p\del[3]{\frac{d_{n1} p^{8/r}s_0}{n^{(r-2)/r}}}.
\end{align*}
Therefore, combining with (\ref{t.6}) yields
\begin{align}
\envert[1]{\alpha'\hat{\Theta} \hat{\Sigma}_{xu} \hat{\Theta}'\alpha - \alpha'\hat{\Theta} \tilde{\Sigma}_{xu} \hat{\Theta}'\alpha}
=
O_p\del[3]{\frac{p^{8/r}\sqrt{s_0}h\bar{s}}{n^{3/4}}}+O_p\del[3]{\frac{\sqrt{d_{n1}}p^{2/r}\sqrt{s_0}h\bar{s}}{n^{1/2}}}
+O_p\del[3]{\frac{d_{n1} p^{8/r}s_0h\bar{s}}{n^{(r-2)/r}}}
=
o_p(1),\label{b.0}
\end{align}
by Assumption 3c) and since $d_{n1}$ is bounded by constants. This establishes (\ref{t.3}).

Next, we turn to (\ref{t.4}). First, note that
\begin{align}
| \alpha'\hat{\Theta}\tilde{\Sigma}_{xu} \hat{\Theta}'\alpha - \alpha\hat{\Theta} \Sigma_{xu} \hat{\Theta}'\alpha|
\le
 \|\tilde{\Sigma}_{xu} - \Sigma_{xu} \|_{\infty}
\|\hat{\Theta}'\alpha\|_1^2.\label{term2aux}
\end{align}
Furthermore, similarly to (\ref{holder}), three applications of Hölder's inequality reveal that $X_{k,i}X_{l,i}u_i^2$ have uniformly bounded $r/4$ moments. Hence, by Lemma \ref{conc}, for any $t>0$
\begin{align*}
P\del[2]{\|\tilde{\Sigma}_{xu} - \Sigma_{xu} \|_{\infty}>t}
=
P\del[3]{\envert[2]{\frac{1}{n}\sum_{i=1}^nX_{k,i}X_{l,i}u_i^2-E\del[1]{X_{k,i}X_{l,i}u_i^2}}>t}
\leq 
b_{r/4}\frac{p^2Cn^{r/8}}{(tn)^{r/4}}.
\end{align*}
Thus, choosing $t=M\frac{p^{8/r}}{n^{1/2}}$ for $M>0$ sufficiently large shows that
\begin{align*}
\|\tilde{\Sigma}_{xu} - \Sigma_{xu} \|_{\infty}
=
O_p\del[3]{\frac{p^{8/r}}{n^{1/2}}}.
\end{align*}
By (\ref{term2aux}) and (\ref{t.6})
\begin{align*}
| \alpha'\hat{\Theta}\tilde{\Sigma}_{xu} \hat{\Theta}'\alpha - \alpha\hat{\Theta} \Sigma_{xu} \hat{\Theta}'\alpha|
=
O_p\del[3]{\frac{p^{8/r}h\bar{s}}{n^{1/2}}}
=o_p(1),
\end{align*}
and Assumption 3b). 

Finally, we establish (\ref{t.5}) to conclude (\ref{t.2}). By Lemma 6.1 in \cite{van2014}
\begin{align*}
| \alpha'\hat{\Theta} \Sigma_{xu} \hat{\Theta}'\alpha - \alpha'\Theta \Sigma_{xu} \Theta'\alpha| 
&\le
 \|\Sigma_{xu}\|_{\infty} \|\hat{\Theta}'\alpha - \Theta'\alpha\|_1^2 
+ 2 \|\Sigma_{xu}  \Theta'\alpha \|_{2} \|\hat{\Theta}'\alpha - \Theta'\alpha \|_2 \\
& \leq
 \|\Sigma_{xu}\|_{\infty} \|(\hat{\Theta}' - \Theta')\alpha\|_1^2 
+ 2 \phi_{\max}({\Sigma_{xu}}) \enVert[1]{\Theta'\alpha}_2 \|(\hat{\Theta}' - \Theta')\alpha \|_2.
\end{align*}
Note that
\begin{align}
\|(\hat{\Theta}' - \Theta')\alpha\|_1
&=
\enVert[4]{\sum_{j\in H}\del[1]{\hat{\Theta}_j-\Theta_j}\alpha_j}_1
\leq
\sum_{j\in H}\enVert[1]{\hat{\Theta}_j-\Theta_j}_1|\alpha_j|
\leq
\max_{j\in H}\enVert[1]{\hat{\Theta}_j-\Theta_j}_1\sum_{j\in H}|\alpha_j|\notag\\
&=
O_p\del[3]{d_{n2} \bar{s} \frac{h^{2/r+1/2}p^{2/r}}{\sqrt{n}}},\label{l2auxx}
\end{align}
by (\ref{l1theta}) and $\enVert[0]{\alpha}_2=1$. Furthermore, using the symmetry of $\Theta$,
\begin{align*}
\enVert[1]{\Theta'\alpha}_2
\leq
%\enVert[1]{\Theta'}_{\ell_2}\enVert{\alpha}_2
%=
\phi_{\max}(\Theta) \|\alpha\|_2
=
\frac{1}{\phi_{\min}(\Sigma)},
\end{align*}
which is bounded by Assumption 2a). Finally,
\begin{align*}
\|(\hat{\Theta}' - \Theta')\alpha\|_2
&=
\enVert[4]{\sum_{j\in H}\del[1]{\hat{\Theta}_j-\Theta_j}\alpha_j}_2
\leq
\sum_{j\in H}\enVert[1]{\hat{\Theta}_j-\Theta_j}_2|\alpha_j|
\leq
\max_{j\in H}\enVert[1]{\hat{\Theta}_j-\Theta_j}_2\sum_{j\in H}|\alpha_j|\\
&=
O_p\del[3]{\sqrt{d_{n1}} \sqrt{ \bar{s}} \frac{h^{2/r+1/2}p^{2/r}}{\sqrt{n}}},
\end{align*}
by (\ref{l2theta}) and $\enVert[0]{\alpha}_2=1$. Therefore, by $\enVert[0]{\Sigma_{xu}}_\infty\leq \phi_{\max}(\Sigma_{xu})$ with the latter assumed bounded from Assumption 3e),
\begin{align}
| \alpha'\hat{\Theta} \Sigma_{xu} \hat{\Theta}'\alpha - \alpha'\Theta \Sigma_{xu} \Theta'\alpha| 
=
O_p\del[3]{d_{n2}^2 \bar{s}^2 \frac{h^{4/r+1}p^{4/r}}{n}}+O_p\del[3]{\sqrt{d_{n1}} \sqrt{\bar{s}} \frac{h^{2/r+1/2}p^{2/r}}{\sqrt{n}}}
=
o_p(1),\label{l2auxxb}
\end{align}
where we used 
\begin{align*}
\frac{\bar{s}^2 h^{(4/r)+1} p^{4/r}}{n} 
\leq
\frac{\bar{s} (h \bar{s}) p^{8/r}}{n}
=
\frac{\bar{s}}{n^{1/2}}\cdot \frac{ (h \bar{s}) p^{8/r}}{n^{1/2}}\to 0,
\end{align*}
and Assumption 3b (which also implies $\bar{s}=o(n^{1/2})$), and $d_{n1}, d_{n2}$ being bounded by constants. The uniformity of (\ref{t.2}) over $\mathcal{B}_{\ell_0}(s_0)$ follows from simply observing that (\ref{unif1}) and (\ref{unif2}) above are actually valid uniformly over this set and that this is the only place in which $\beta_0$ enters in the above arguments.

We now turn to showing that the numerators of $t_1'$ and $t_1$ are asymptotically equivalent, i.e.
\[ |\alpha'\hat{\Theta} X'u/n^{1/2} - \alpha'\Theta X'u/n^{1/2}| = o_p (1).\]
By Lemma \ref{Noise} and (\ref{l2auxx}) above we get, using $h\leq p$, and Assumption 3b, $d_{n2}$ being bounded by constants
\begin{align}
n^{1/2}|\alpha'\hat{\Theta} X'u/n - \alpha'\Theta X'u/n|  
&\leq
  n^{1/2} \enVert[2]{\frac{X'u}{n}}_{\infty} \|\alpha'(\hat{\Theta} - \Theta) \|_1 \notag\\
 &=
  n^{1/2} O_p\del[3]{\frac{p^{2/r}}{\sqrt{n}}} O\del[3]{d_{n2} \bar{s} \frac{h^{2/r+1/2}p^{2/r}}{\sqrt{n}}}\notag\\
 &= 
O_p \del[2]{d_{n2} \bar{s}\frac{h^{2/r+1/2}p^{4/r}}{\sqrt{n}}}\notag\\
&=
O_p \del[2]{d_{n2} \bar{s}\frac{h^{1/2}p^{6/r}}{\sqrt{n}}}\notag\\
&=
o_p (1)\label{numerator}.
\end{align}

{\bf Step 2}. It remains to be shown that $t_2=o_p(1)$. The denominators of $t_1$ and $t_2$ are identical. Hence, the denominator of $t_2$ is asymptotically bounded away from zero with probability approaching one by (\ref{lowbound}) and (\ref{t.2}). Thus, it suffices to show that the numerator of $t_2$ vanishes in probability. Note that, by the definition of $\Delta$, and $\|\alpha \|_2=1$,
 \begin{align}
 |\alpha'\Delta|
&\leq
\max_{j\in H}\envert[0]{\Delta_j}\sum_{j\in H}|\alpha_j|
=
\max_{j\in H}\envert[2]{\del[1]{\hat{\Theta}_j'\hat{\Sigma}-e_j}\del[1]{\sqrt{n}(\hat{\beta}-\beta_0)}}\sum_{j\in H}|\alpha_j|\\
&\leq
\max_{j\in H}\enVert[2]{\del[1]{\hat{\Theta}_j'\hat{\Sigma}-e_j}}_\infty \enVert[1]{\sqrt{n}(\hat{\beta}-\beta_0)}_1 O\del[2]{\sqrt{h}}.\label{pt33c.1}
 \end{align}  
First, it follows from Theorem \ref{thm1} that $n^{1/2} \|\hat{\beta} - \beta_0 \|_1 =O_p\del[1]{ d_{n2} s_0p^{2/r}}$. Next, we consider  
\begin{align*}
\max_{j\in H}\enVert[2]{\del[1]{\hat{\Theta}_j'\hat{\Sigma}-e_j}}_\infty
\leq
\max_{j\in H}\frac{\lambda_{node,n}}{\hat{\tau}_j^2}
=
O_p\del[3]{\frac{h^{2/r}p^{2/r}}{n^{1/2}}},
\end{align*}
where we have used the definition of $\lambda_{node,n}$ and $\max_{j\in H}1/\hat{\tau}_j^2=O_p(1)$ by (\ref{pt32.3i}) and Assumption 3b). Thus, in total we have
\begin{align}
\envert[1]{\alpha'\Delta}
=
O_p\del[3]{\frac{h^{2/r}p^{2/r}}{n^{1/2}}}O_p\del[1]{d_{n2} s_0p^{2/r}}O\del[2]{\sqrt{h}}
=
O_p\del[3]{d_{n2} s_0\frac{h^{2/r+1/2}p^{4/r}}{n^{1/2}}}
=
o_p(1),\label{thm2aa}
\end{align}
by Assumption 3a), and $d_{n2}$ being bounded by constants. The fact that $\sup_{\beta_0\in\mathcal{B}_{\ell_0}(s_0)}\envert[1]{\alpha'\Delta}=o_p(1)$ follows from the observation that Theorem \ref{thm1} actually yields that $\sup_{\beta_0\in\mathcal{B}_{\ell_0}(s_0)}n^{1/2} \|\hat{\beta} - \beta_0 \|_1 =O_p\del[1]{d_{n2} s_0p^{2/r}}$ in the above argument and that this is the only place in which $\beta_0$ enters these arguments. Thus, for later reference,
\begin{align}
\sup_{\beta_0\in\mathcal{B}_{\ell_0}(s_0)}\envert[1]{\alpha'\Delta}=o_p(1).\label{Deltaunif}
\end{align}
\end{proof}

\begin{proof}[Proof of Theorem \ref{thm3}]
For $\epsilon>0$ define
\begin{align*}
A_{1,n}:=\cbr[3]{\sup_{\beta_0\in\mathcal{B}_{\ell_0}(s_0)}\envert[1]{\alpha'\Delta}< \epsilon},\ 
A_{2,n}:=\cbr[4]{\sup_{\beta_0\in\mathcal{B}_{\ell_0}(s_0)}\envert[3]{\frac{\sqrt{\alpha'\hat{\Theta} \hat{\Sigma}_{xu} \hat{\Theta}'\alpha}}{\sqrt{\alpha'\Theta \Sigma_{xu} \Theta'\alpha}}-1}<\epsilon},
\end{align*}
and
\begin{align*}
A_{3,n}:=\cbr[2]{\envert[1]{\alpha'\hat{\Theta} X'u/n^{1/2} - \alpha'\Theta X'u/n^{1/2}}<\epsilon}.
\end{align*}
By, (\ref{Deltaunif}), (\ref{asymcov}), (\ref{numerator}), and $\sqrt{\alpha'\Theta \Sigma_{xu} \Theta'\alpha}$ being bounded away from zero (by (\ref{lowbound})) the probabilities of these three sets all tend to one. Thus, for every $t\in\mathbb{R}$,
\begin{align*}
&\envert[4]{P\del[4]{\frac{n^{1/2} \alpha'(\hat{b} - \beta_{0})}{\sqrt{\alpha'\hat{\Theta} \hat{\Sigma}_{xu} \hat{\Theta}'\alpha}}\leq t}-\Phi(t)}\\
&=
\envert[4]{P\del[4]{\frac{\alpha'\hat{\Theta} X' u /n^{1/2}}{\sqrt{\alpha'\hat{\Theta} \hat{\Sigma}_{xu} \hat{\Theta}'\alpha}} -\frac{ \alpha'\Delta}{\sqrt{\alpha'\hat{\Theta} \hat{\Sigma}_{xu} \hat{\Theta}'\alpha}}\leq t}-\Phi(t)}\\
&\leq 
\envert[4]{P\del[4]{\frac{\alpha'\hat{\Theta} X' u /n^{1/2}}{\sqrt{\alpha'\hat{\Theta} \hat{\Sigma}_{xu} \hat{\Theta}'\alpha}} -\frac{ \alpha'\Delta}{\sqrt{\alpha'\hat{\Theta} \hat{\Sigma}_{xu} \hat{\Theta}'\alpha}}\leq t, A_{1,n}, A_{2,n}, A_{3,n}}-\Phi(t)}+P\del[1]{\cup_{i=1}^3A^c_{i,n}}.
\end{align*}

Using that $\sqrt{\alpha'\Theta \Sigma_{xu} \Theta'\alpha}$ does not depend on $\beta_0$ and is bounded away from zero by (\ref{lowbound}) there exists a positive constant $D$ such that 
\begin{align*}
&P\del[4]{\frac{\alpha'\hat{\Theta} X' u /n^{1/2}}{\sqrt{\alpha'\hat{\Theta} \hat{\Sigma}_{xu} \hat{\Theta}'\alpha}} -\frac{ \alpha'\Delta}{\sqrt{\alpha'\hat{\Theta} \hat{\Sigma}_{xu} \hat{\Theta}'\alpha}}\leq t, A_{1,n}, A_{2,n}, A_{3,n}}\\
&=
P\del[4]{\frac{\alpha'\hat{\Theta} X' u /n^{1/2}}{\sqrt{\alpha'\Theta \Sigma_{xu} \Theta'\alpha}} -\frac{ \alpha'\Delta}{\sqrt{\alpha'\Theta \Sigma_{xu} \Theta'\alpha}}\leq t\frac{\sqrt{\alpha'\hat{\Theta} \hat{\Sigma}_{xu} \hat{\Theta}'\alpha}}{\sqrt{\alpha'\Theta \Sigma_{xu} \Theta'\alpha}}, A_{1,n}, A_{2,n}, A_{3,n}}\\
&\leq
P\del[4]{\frac{\alpha'\Theta X' u /n^{1/2}}{\sqrt{\alpha'\Theta \Sigma_{xu} \Theta'\alpha}}\leq t(1+\epsilon)+\frac{\epsilon+\epsilon}{\sqrt{\alpha'\Theta \Sigma_{xu} \Theta'\alpha}}}\\
&\leq
P\del[4]{\frac{\alpha'\Theta X' u /n^{1/2}}{\sqrt{\alpha'\Theta \Sigma_{xu} \Theta'\alpha}}\leq t(1+\epsilon)+2D\epsilon}.
\end{align*}
Thus, as the right hand side in the above display does not depend on $\beta_0$
\begin{align*}
&\sup_{\beta_0\in\mathcal{B}_{\ell_0}(s_0)}P\del[4]{\frac{\alpha'\hat{\Theta} X' u /n^{1/2}}{\sqrt{\alpha'\hat{\Theta} \hat{\Sigma}_{xu} \hat{\Theta}'\alpha}} -\frac{ \alpha'\Delta}{\sqrt{\alpha'\hat{\Theta} \hat{\Sigma}_{xu} \hat{\Theta}'\alpha}}\leq t, A_{1,n}, A_{2,n}, A_{3,n}}\\
&\leq
P\del[4]{\frac{\alpha'\Theta X' u /n^{1/2}}{\sqrt{\alpha'\Theta \Sigma_{xu} \Theta'\alpha}}\leq t(1+\epsilon)+2D\epsilon}.
\end{align*}
In step 1a) of the proof of Theorem \ref{thm2} we established the asymptotic normality of $\frac{\alpha'\Theta X' u /n^{1/2}}{\sqrt{\alpha'\Theta \Sigma_{xu} \Theta'\alpha}}$. Therefore, for $n$ sufficiently large,
\begin{align*}
\sup_{\beta_0\in\mathcal{B}_{\ell_0}(s_0)}P\del[4]{\frac{\alpha'\hat{\Theta} X' u /n^{1/2}}{\sqrt{\alpha'\hat{\Theta} \hat{\Sigma}_{xu} \hat{\Theta}'\alpha}} -\frac{ \alpha'\Delta}{\sqrt{\alpha'\hat{\Theta} \hat{\Sigma}_{xu} \hat{\Theta}'\alpha}}\leq t,\ A_{1,n}, A_{2,n}, A_{3,n}}
\leq
\Phi\del[1]{t(1+\epsilon)+2D\epsilon}+\epsilon.
\end{align*}
As the above arguments are valid for all $\epsilon >0$ we can use the continuity of $q\mapsto \Phi(q)$ to conclude that for any $\delta>0$ we can choose $\epsilon$ sufficiently small to conclude that
\begin{align}
\sup_{\beta_0\in\mathcal{B}_{\ell_0}(s_0)}P\del[4]{\frac{\alpha'\hat{\Theta} X' u /n^{1/2}}{\sqrt{\alpha'\hat{\Theta} \hat{\Sigma}_{xu} \hat{\Theta}'\alpha}} -\frac{ \alpha'\Delta}{\sqrt{\alpha'\hat{\Theta} \hat{\Sigma}_{xu} \hat{\Theta}'\alpha}}\leq t,\ A_{1,n}, A_{2,n}, A_{3,n}}
\leq 
\Phi(t)+\delta+\epsilon\label{upper}.
\end{align}
Next, using that $\sqrt{\alpha'\Theta \Sigma_{xu} \Theta'\alpha}$ does not depend on $\beta_0$ and is bounded away from zero by (\ref{lowbound}) there exists a positive constant $D$ such that 
\begin{align*}
&P\del[4]{\frac{\alpha'\hat{\Theta} X' u /n^{1/2}}{\sqrt{\alpha'\hat{\Theta} \hat{\Sigma}_{xu} \hat{\Theta}'\alpha}} -\frac{ \alpha'\Delta}{\sqrt{\alpha'\hat{\Theta} \hat{\Sigma}_{xu} \hat{\Theta}'\alpha}}\leq t,\ A_{1,n}, A_{2,n}, A_{3,n}}\\
&=
P\del[4]{\frac{\alpha'\hat{\Theta} X' u /n^{1/2}}{\sqrt{\alpha'\Theta \Sigma_{xu} \Theta'\alpha}} -\frac{ \alpha'\Delta}{\sqrt{\alpha'\Theta \Sigma_{xu} \Theta'\alpha}}\leq t\frac{\sqrt{\alpha'\hat{\Theta} \hat{\Sigma}_{xu} \hat{\Theta}'\alpha}}{\sqrt{\alpha'\Theta \Sigma_{xu} \Theta'\alpha}},\ A_{1,n}, A_{2,n}, A_{3,n}}\\
&\geq
P\del[4]{\frac{\alpha'\Theta X' u /n^{1/2}}{\sqrt{\alpha'\Theta \Sigma_{xu} \Theta'\alpha}}\leq t(1-\epsilon)-\frac{ \epsilon+\epsilon}{\sqrt{\alpha'\Theta \Sigma_{xu} \Theta'\alpha}},\ A_{1,n}, A_{2,n}, A_{3,n}}\\
&\geq
P\del[4]{\frac{\alpha'\Theta X' u /n^{1/2}}{\sqrt{\alpha'\Theta \Sigma_{xu} \Theta'\alpha}}\leq t(1-\epsilon)-2D\epsilon,\ A_{1,n}, A_{2,n}, A_{3,n}}\\
&\geq
P\del[4]{\frac{\alpha'\Theta X' u /n^{1/2}}{\sqrt{\alpha'\Theta \Sigma_{xu} \Theta'\alpha}}\leq t(1-\epsilon)-2D\epsilon}+P\del[2]{\cap_{i=1}^3A_{i,n}}-1.
\end{align*}
Thus, as the right hand side in the above display does not depend on $\beta_0$ and since $P\del[2]{\cap_{i=1}^3A_{i,n}}$ can be made arbitrarily close to one by choosing $n$ sufficiently we conclude 
\begin{align*}
&\inf_{\beta_0\in\mathcal{B}_{\ell_0}(s_0)}P\del[4]{\frac{\alpha'\hat{\Theta} X' u /n^{1/2}}{\sqrt{\alpha'\hat{\Theta} \hat{\Sigma}_{xu} \hat{\Theta}'\alpha}} -\frac{ \alpha'\Delta}{\sqrt{\alpha'\hat{\Theta} \hat{\Sigma}_{xu} \hat{\Theta}'\alpha}}\leq t,\ A_{1,n}, A_{2,n}, A_{3,n}}\\
&\geq
P\del[4]{\frac{\alpha'\Theta X' u /n^{1/2}}{\sqrt{\alpha'\Theta \Sigma_{xu} \Theta'\alpha}}\leq t(1-\epsilon)-2D\epsilon}-\epsilon,
\end{align*}
for $n$ sufficiently large. In step 1a) of the proof of Theorem \ref{thm2} we established the asymptotic normality of $\frac{\alpha'\Theta X' u /n^{1/2}}{\sqrt{\alpha'\Theta \Sigma_{xu} \Theta'\alpha}}$. Thus, for $n$ sufficiently large,
\begin{align*}
\inf_{\beta_0\in\mathcal{B}_{\ell_0}(s_0)}P\del[4]{\frac{\alpha'\hat{\Theta} X' u /n^{1/2}}{\sqrt{\alpha'\hat{\Theta} \hat{\Sigma}_{xu} \hat{\Theta}'\alpha}} -\frac{ \alpha'\Delta}{\sqrt{\alpha'\hat{\Theta} \hat{\Sigma}_{xu} \hat{\Theta}'\alpha}}\leq t,\ A_{1,n}, A_{2,n}, A_{3,n}}
\geq
\Phi\del[1]{t(1-\epsilon)-2D\epsilon}-2\epsilon.
\end{align*}
As the above arguments are valid for all $\epsilon >0$ we can use the continuity of $q\mapsto \Phi(q)$ to conclude that for any $\delta>0$ we can choose $\epsilon$ sufficiently small to conclude that
\begin{align}
\inf_{\beta_0\in\mathcal{B}_{\ell_0}(s_0)}P\del[4]{\frac{\alpha'\hat{\Theta} X' u /n^{1/2}}{\sqrt{\alpha'\hat{\Theta} \hat{\Sigma}_{xu} \hat{\Theta}'\alpha}} -\frac{ \alpha'\Delta}{\sqrt{\alpha'\hat{\Theta} \hat{\Sigma}_{xu} \hat{\Theta}'\alpha}}\leq t,\ A_{1,n}, A_{2,n}, A_{3,n}}
\geq
\Phi(t)-2\epsilon-\delta.\label{lower}
\end{align}
By (\ref{upper}) and (\ref{lower}) and $\sup_{\beta_0\in\mathcal{B}_{\ell_0}(s_0)}P\del[1]{\cup_{i=1}^3A^c_{i,n}}=P\del[1]{\cup_{i=1}^3A^c_{i,n}}\to 0$ (here we used that none of the sets $A_1,A_2,$ or $A_3$ depend on $\beta_0$) we conclude that
\begin{align*}
\sup_{\beta_0\in\mathcal{B}_{\ell_0}(s_0)}\envert[4]{P\del[4]{\frac{n^{1/2} \alpha'(\hat{b} - \beta_{0})}{\sqrt{\alpha'\hat{\Theta} \hat{\Sigma}_{xu} \hat{\Theta}'\alpha}}\leq t}-\Phi(t)}\to 0.
\end{align*}

To see (\ref{t3p2}) note that
\begin{align*}
&P\del[3]{\beta_{0,j}\notin\sbr[2]{\hat{b}_j-z_{1-\alpha/2}\frac{\hat{\sigma}_j}{\sqrt{n}},\hat{b}_j+z_{1-\alpha/2}\frac{\hat{\sigma}_j}{\sqrt{n}}}}\\
&=
P\del[4]{\envert[3]{\frac{\sqrt{n}\del[1]{\hat{b}_j-\beta_{0,j}}}{\hat{\sigma}_j}}>z_{1-\alpha/2}}\\
&=
P\del[4]{\frac{\sqrt{n}\del[1]{\hat{b}_j-\beta_{0,j}}}{\hat{\sigma}_j}>z_{1-\alpha/2}}+P\del[4]{\frac{\sqrt{n}\del[1]{\hat{b}_j-\beta_{0,j}}}{\hat{\sigma}_j}<-z_{1-\alpha/2}}\\
&\le
1-P\del[4]{\frac{\sqrt{n}\del[1]{\hat{b}_j-\beta_{0,j}}}{\hat{\sigma}_j}\leq z_{1-\alpha/2}}+P\del[4]{\frac{\sqrt{n}\del[1]{\hat{b}_j-\beta_{0,j}}}{\hat{\sigma}_j}\leq-z_{1-\alpha/2}}.
\end{align*}
Thus, taking the supremum over $\beta_0\in\mathcal{B}_{\ell_0}(s_0)$ and letting $n$ tend to infinity yields an inequality in (\ref{t3p2}) via (\ref{t3p1}). The reverse inequality follows upon noting that\\ $P\del[1]{\beta_{0,j}\notin\sbr[0]{\hat{b}_j-z_{1-\alpha/2}\frac{\hat{\sigma}_j}{\sqrt{n}},\hat{b}_j+z_{1-\alpha/2}\frac{\hat{\sigma}_j}{\sqrt{n}}}}\geq 1-P\del[1]{\frac{\sqrt{n}\del[0]{\hat{b}_j-\beta_{0,j}}}{\hat{\sigma}_j}\leq z_{1-\alpha/2}}+P\del[1]{\frac{\sqrt{n}\del[0]{\hat{b}_j-\beta_{0,j}}}{\hat{\sigma}_j}\leq-z_{1-\alpha/2-\delta_1}}$ for any $\delta_1>0$. 

Finally, we turn to (\ref{t3p3}). By (\ref{asymcov}) we know $\sup_{\beta_0\in\mathcal{B}_{\ell_0}(s_0)}\envert[1]{\alpha'\hat{\Theta} \hat{\Sigma}_{xu} \hat{\Theta}'\alpha - \alpha'\Theta \Sigma_{xu} \Theta'\alpha}
=o_p(1)$. Hence, choosing $\alpha=e_j$ and $\phi_{\max}(\Theta)=1/\phi_{\min}(\Sigma)$, 
\begin{align*}
&\sqrt{n}\sup_{\beta_0\in\mathcal{B}_{\ell_0}(s_0)}\diam\del[3]{\sbr[2]{\hat{b}_j-z_{1-\alpha/2}\frac{\hat{\sigma}_j}{\sqrt{n}},\hat{b}_j+z_{1-\alpha/2}\frac{\hat{\sigma}_j}{\sqrt{n}}}}
=
\sup_{\beta_0\in\mathcal{B}_{\ell_0}(s_0)}2 \hat{\sigma}_j z_{1-\alpha/2}\\
&=
2\del[3]{\sup_{\beta_0\in\mathcal{B}_{\ell_0}(s_0)}\sqrt{e_j'\Theta \Sigma_{xu} \Theta'e_j}+o_p(1)}z_{1-\alpha/2}\\
&\leq 
2\del[3]{\sqrt{\phi_{\max}(\Sigma_{xu})}\frac{1}{\phi_{\min}(\Sigma)}+o_p(1)}  z_{1-\alpha/2}\\
&=
O_p(1),
\end{align*}
as $\phi_{\max}(\Sigma_{xu})$ is bounded from above and $\phi_{\min}(\Sigma)$ is bounded from below by Assumptions 2a) and 3e).
\end{proof}

{\bf Strong oracle optimality of the variant of the Conservative Lasso}\\
We provide a strong oracle optimality result for $\tilde{\beta}$; the variant of the conservative Lasso estimator. Recall that
\[ \tilde{\beta} = \argmin_{\beta \in \mathbb{R}^p} \{ \| Y - X \beta \|_n^2 + 2 \lambda_n \sum_{j=1}^p \tilde{w}_j  |\beta_j| \},\]
with $\tilde{w}_j = 1_{ \{ |\hat{\beta}_{L,j} | \le \lambda_{prec} \}}$. Define the oracle estimator as
\begin{equation}
 \hat{\beta}^{oracle} = (\hat{\beta}_{S_0}^{oracle},0) = \argmin_{\beta, \beta_{S_0^c} = 0}[ \|Y- X \beta\|_n^2].\label{orc}
 \end{equation}
which we assume to be unique as in (\cite{fanx2014}). Strong oracle optimality of $\tilde{\beta}$ means it is equal to the oracle estimator with probability approaching one (\cite{fanx2014}).

%{\bf Assumption A.1} The following event holds with probability approaching one:
%

%{\bf Assumption A.2} The following event holds with probability approaching one:
%
%

%{\bf Assumption A.3}. We assume
%\[ \min_{ j \in S_0} |\beta_{0,j} | > 2 \lambda_{prec}.\]

%These assumptions are restrictive than our Assumptions 1-2 but used to prove strong oracle optimality as in \cite{fanx2014}. 
%Assumption A.1  is proved as Lemma \ref{lemma9} here via only our Assumption 1. It can be proven under different primitive conditions as in \cite{fanx2014}.
%Assumption A.2 can be proven under primitive conditions as shown in  proof of Theorem 3 in \cite{fanx2014}.  These primitives are are sub-gaussian errors with uniformly bounded regressors which is Assumption A1 on p. 826 of \cite{fanx2014}.
%Assumption A.3 is a beta-min type of condition that is used in statistics literature, and is also very similar to our condition to derive Lemma  \ref{lt1} (ii).
Introduce the events
\begin{equation}
{\cal C}_1 = \{ \|\hat{\beta}_L - \beta_0 \|_{\infty}  \le \lambda_{prec}\}\label{c1}
\end{equation}
and
\begin{equation}
{\cal C}_2 = \{ \| (\bigtriangledown_{S_0^c} \| Y - X \hat{\beta}^{oracle}\|_n^2 )\|_{\infty} <  2 \lambda_n \}.\label{c2}
\end{equation}
where $\bigtriangledown_{S_0^c}$ denotes the gradient with respect to the entries of $\beta$ that are indexed by $S_0^c$. Next, we introduce the $n \times (p-s_0)$ matrix
\[ \tilde{X} = M_{S_0} X_{S_o^c} ,\]
with $M_{S_0} = I_n - X_{S_0} (X_{S_0}' X_{S_0})^{-1} X_{S_0}'$, and $X_{S_0^c}, X_{S_0}$ are ($n \times (p-s_0), n \times s_0$ matrices). %Let $\tilde{X}_{i,j}$ represent $i$ th row and $j$th column element of $\tilde{X}$ matrix.

\begin{thm}\label{thm4}
Impose Assumptions 1-2 and

(i). With probability approaching one  
\[ \min_{j\in S_0^c}\tilde{w}_j =1,\] and with added  $\min_{j \in S_0} | \beta_{0,j} | > 2 \lambda_{prec}$,
\[ \max_{j\in S_0}\tilde{w}_j = 0.\]

(ii). If, furthermore, $E | \tilde{X}_{j,i}|^r < C$ for a universal constant $C$ then for all $\epsilon>0$ there exists an $n$ sufficiently large such that
\[ P (\tilde{\beta} = \hat{\beta}^{oracle}) \geq 1-\epsilon.\]

\end{thm}

\textbf{Remarks.}
 
1.The first part of Theorem \ref{thm4} is similar to Lemma \ref{lt1} (ii)-(iii). However, the important difference is that the new variant of the conservative Lasso ensures that the weights pertaining to the non-zero coefficients will be exactly \textit{equal} to zero with probability approaching one. Lemma \ref{lt1} only guarantees that these weights \textit{converge} to zero for the conservative Lasso. The same caveat before Lemma 1 applies regarding the restrictiveness of the result since we use $\beta-\min$ condition.

% the weight in variant of conservative lasso, $\tilde{w}_j$ takes the value of zero for $j \in S_0$ with probability approaching one  or value of one  for $j \in S_0^c$ with probability approaching one. Weights $\hat{w}_j$ in the conservative lasso in Lemma \ref{lt1}, take the value  of one when $j \in S_0^c$  with probability approaching one but $\hat{w}_j$ approaches zero  (not exactly zero) when $j \in S_0$, with probability approaching one.   Lemma \ref{lt1} cannot get same result as  Theorem \ref{thm4} (i).

2. Note that $\lambda_{prec} \to 0$ under Assumptions 1-2 also for the variant of the conservative Lasso.

%3. For sufficiently large n, Assumption $\min_{j \in S_0} |\beta_{0,j}| > 2 \lambda_{prec}$ in Theorem \ref{thm4} is less restrictive than $\min_{j \in S_0} | \beta_{0j}| \ge (\lambda_{prec})^{d_1}, 0 < d_1 < 1$ in Lemma \ref{lt1}  since $\lambda_{prec}$ is local to zero. So we can get sharper results with $\tilde{w}_j$  based variant of conservative lasso ($\tilde{\beta}$) compared with conservative lasso ($\hat{\beta}$).

3. Part (ii) of Theorem \ref{thm4} is the strong oracle optimality of $\tilde{\beta}$.

\begin{proof}
Throughout we assume that $ \Xi = {\cal C}_1 \cap {\cal C}_2$ occurs and show at the end of the proof that this is indeed the case with probability approaching one. First, on ${\cal C}_1$
\[ \max_{j\in S_0^c}| \hat{\beta}_{L,j} | =  \max_{j\in S_0^c}| \hat{\beta}_{L,j} - \beta_{0,j} | \le \lambda_{prec}.\]
This shows that 
\begin{equation}
\min_{j\in S_0^c}\tilde{w}_j = 1_{ \{ \max_{j\in S_0^c}| \hat{\beta}_{L,j}| \le \lambda_{prec} \}} =1,\label{w0} 
\end{equation}
Next we consider $j \in S_0$. 
\[ \min_{j\in S_0}| \hat{\beta}_{L,j} | \ge \min_{j \in S_0} |\beta_{0,j}| - \max_{j\in S_0}| \hat{\beta}_{L,j} - \beta_{0,j}| > 2 \lambda_{prec} - \lambda_{prec} = \lambda_{prec}.\]
Thus,  
\begin{equation}
\max_{j\in S_0}\tilde{w}_j = 1_{ \{ \min_{j\in S_0}| \hat{\beta}_{L,j} |\le \lambda_{prec} \}} =0.\label{w1}
\end{equation}

\noindent Now we show that $\tilde{\beta}=\hat{\beta}^{oracle}$ on $\Xi$. Note that
\begin{eqnarray}
\tilde{\beta}  =  \argmin_{\beta}  \{ \|Y - X \beta \|_n^2 + 2 \lambda_n \sum_{j=1}^p \tilde{w}_j | \beta_j| \}
 =  \argmin_{\beta}  \{ \|Y - X \beta \|_n^2 + 2 \lambda_n \sum_{j \in S_0^c} \tilde{w}_j | \beta_j| \},\label{pa1.1}
\end{eqnarray}
since $\tilde{w}_j = 0$ for $j \in S_0$ on ${\cal C}_1$. By convexity of $\| Y - X \beta \|_n^2$ in $\beta$
\begin{eqnarray}
\| Y - X \beta \|_n^2  & \ge & \|Y- X \hat{\beta}^{oracle} \|_n^2 + \sum_{j=1}^p \bigtriangledown_j \|Y - X \hat{\beta}^{oracle} \|_n^2 (\beta_j - \hat{\beta}_j^{oracle}) \nonumber \\
& = &   \|Y- X \hat{\beta}^{oracle} \|_n^2 + \sum_{j \in S_0^c} \bigtriangledown_j \|Y - X \hat{\beta}^{oracle} \|_n^2 (\beta_j - \hat{\beta}_j^{oracle}),\label{pa1.2}
\end{eqnarray}
where $\sum_{j \in S_0} (\bigtriangledown_j \|Y - X \hat{\beta}^{oracle} \|_n^2) = 0$ by the first order conditions for a minimum.
Add $2 \lambda_n \sum_{j \in S_0^c} \tilde{w}_j |\beta_j|$ to both sides of (\ref{pa1.2}) and note that $\hat{\beta}_j^{oracle} = 0 $ for $j \in S_0^c$ from oracle estimator definition,
\begin{eqnarray}
\| Y - X \beta \|_n^2  +   2 \lambda_n \sum_{j \in S_0^c} \tilde{w}_j |\beta_j| 
 \ge 
\| Y - X \hat{\beta}^{oracle} \|_n^2 +  2 \lambda_n \sum_{j \in S_0^c} \tilde{w}_j |\beta_j| 
 +  \sum_{j \in S_0^c} \bigtriangledown_j \|Y - X \hat{\beta}^{oracle} \|_n^2 \beta_j.\label{pa1.2a} 
\end{eqnarray}
Now subtract $\| Y - X \hat{\beta}^{oracle} \|_n^2$ from both sides of (\ref{pa1.2a}) and add $2 \lambda_n \sum_{j \in S_0^c}
\tilde{w}_j \hat{\beta}_j^{oracle}=0$ (which is zero since $\hat{\beta}_j^{oracle} = 0,$ for $ j \in S_0^c$ by the definition of the oracle estimator) to the left side of (\ref{pa1.2a}) to get 
\begin{eqnarray}
\| Y - X \beta \|_n^2 + 2 \lambda_n \sum_{j \in S_0^c} \tilde{w}_j |\beta_j| & - & 
\{ \| Y - X \hat{\beta}^{oracle} \|_n^2 + 2 \lambda_n \sum_{j \in S_0^c} \tilde{w}_j |\hat{\beta}_j^{oracle}| \} \nonumber \\
& \ge & 
[ 2 \lambda_n \sum_{j \in S_0^c} \tilde{w}_j | \beta_j | + \sum_{j \in S_0^c} \bigtriangledown_j \|Y - X \hat{\beta}^{oracle} \|_n^2 \beta_j].\label{pa1.2b}
\end{eqnarray}
Note that $\tilde{w}_j =0$ for all $j \in S_0$ by  (\ref{w1}).  Using this  fact, add  $2 \lambda_n \sum_{j \in S_0} \tilde{w}_j | \beta_j | =0$ and subtract $2 \lambda_n \sum_{j \in S_0} \tilde{w}_j  |\hat{\beta}_j^{oracle}|=0$ from the left side of (\ref{pa1.2b}).
\begin{eqnarray}
\| Y - X \beta \|_n^2 + 2 \lambda_n \sum_{j=1}^p \tilde{w}_j |\beta_j| & - & 
\{ \| Y - X \hat{\beta}^{oracle} \|_n^2 + 2 \lambda_n \sum_{j=1}^p \tilde{w}_j |\hat{\beta}_j^{oracle}| \} \nonumber \\
& \ge & 
[ 2 \lambda_n \sum_{j \in S_0^c} \tilde{w}_j | \beta_j | + \sum_{j \in S_0^c} \bigtriangledown_j \|Y - X \hat{\beta}^{oracle} \|_n^2 \beta_j] \nonumber \\
&=& \sum_{j \in S_0^c}
[ 2 \lambda_n  +  \bigtriangledown_j \|Y - X \hat{\beta}^{oracle} \|_n^2 sgn(\beta_j)] |\beta_j|,\label{pa1.2c}
\end{eqnarray}
where we use (\ref{w0}) in the last equality and $sgn (\beta_j) |\beta_j| =  \beta_j$.   Next, if $sgn(\beta_j) =1$, then 
\[ \sum_{j \in S_0^c}
[ 2 \lambda_n  +  \bigtriangledown_j \|Y - X \hat{\beta}^{oracle} \|_n^2 ] |\beta_j| > 0
\]
while if $sgn(\beta_j)=-1$, since ${\cal C}_2$ is assumed to occur,
\[ \sum_{j \in S_0^c}
[ 2 \lambda_n  - \bigtriangledown_j \|Y - X \hat{\beta}^{oracle} \|_n^2 ] |\beta_j| > 0.
\]
By these inequalities and (\ref{pa1.2c}) we conclude
\begin{eqnarray}
\| Y - X \beta \|_n^2 + 2 \lambda_n \sum_{j=1}^p \tilde{w}_j |\beta_j|  -  
\{ \| Y - X \hat{\beta}^{oracle} \|_n^2 + 2 \lambda_n \sum_{j=1}^p \tilde{w}_j |\hat{\beta}_j^{oracle}| \} \ge 0.
\label{pa1.2d}
\end{eqnarray}
Strict inequality in (\ref{pa1.2d}) is true, unless $\beta_j =0$, for all $j \in S_0^c$. We now turn to verifying that the probability of $\Xi$ tends to one. By the above display $\tilde{\beta} = \hat{\beta}^{oracle}$ on $\Xi= {\cal C}_1 \cap {\cal C}_2 $ since $\beta \mapsto \| Y - X \beta \|_n^2$ is assumed to be uniquely minimized at $ \hat{\beta}^{oracle}$.

Lemma \ref{lemma9} proves $P({\cal C}_1^c ) \to 0$ under Assumptions 1-2, which also establishes part (i) of the theorem since the desired properties of the weights have been established on $\mathcal{C}_1$

To establish (ii) of the theorem it remains to show that $ P ({\cal  C}_2^c ) \geq 1-\epsilon$ for any $\epsilon>0$. As in the proof of Theorem 3 in \cite{fanx2014} by definition of the oracle estimator in (\ref{orc}) via simple matrix algebra 
 \[ \sum_{j \in S_0^c} (\bigtriangledown_j \|Y - X \hat{\beta}^{oracle} \|_n^2) = \frac{2}{n} X_{S_0^c}' M_{S_0} u = \frac{2}{n} \tilde{X}'u.\]
Next, $E | \tilde{X}_{ij} u_i |^{r/2} \le \sqrt{E |\tilde{X}_{i,j}|^r E |u_i |^r} \le C$ such that Lemma \ref{conc} yields
 \begin{eqnarray}
 P [ \| \tilde{X}' u \|_{\infty} \ge (n \lambda_n)] & \le & \frac{ b_{r/2} (p-s_0) n^{r/4} \max_{j \in S_0^c} \max_{1 \le i \le n} E |\tilde{X}_{i,j} u_i |^{r/2}}
 {(n \lambda_n)^{r/2}} \nonumber \\
 & \le & \frac{b_{r/2} p n^{r/4} C }{(n \lambda_n)^{r/2}} = \frac{C}{M^{r/2}},\label{ptxu}
 \end{eqnarray}
where we used $\lambda_n = M p^{2/r}/n^{1/2}$, and combined the constants $b_{r/2}$ and $C$ into $C$. Choosing $M$ sufficiently large we can make the right hand side of (\ref{ptxu}) less than $\epsilon$. 
% So (\ref{pa1.2d}) with (\ref{c2c}) and Lemma \ref{lemma9} with Assumption 2 proves the desired result. Specifically, new variant of conservative lasso $\tilde{\beta}$ is equal to oracle estimator  with probability at least $1 - \frac{2C}{M^{r/2}} - \frac{D p^2 s_0^{r/2}}{n^{r/4}}$. 
 
\end{proof}

{\bf Choice of Tuning Parameter $\lambda_n$}
In this part we state a theorem for tuning parameter choice that guarantees variable selection consistency of the variant of the conservative Lasso. We discuss the assumptions needed in detail. Basically, we show that the variant of the conservative Lasso in (\ref{vcl}) fits into Corollary 1 of \cite{fantang13}. For this we assume deterministic regressors and gaussian error terms which simplifies the conditions of the following theorem a bit. The case of non-gaussianity can be handled as in Condition 3, p.544 of \cite{fantang13} but brings more notation.

Denote the set of $\lambda_n$ that result in an underfit by
\[ \Omega_{-} = \{ \lambda_n \in [\lambda_l, \lambda_u]: S_{\lambda_n} \not \supset S_{\lambda_0} \},\]
where $\lambda_0$ represents an ideal tuning parameter that provides the correct model. Thus, $S_{\lambda_0}=S_0$. $\lambda_l$ and $\lambda_u$ can be chosen as described on p.540 in \cite{fantang13}.
Denote the set of $\lambda_n$ that result in an overfit by
\[ \Omega_{+} = \{ \lambda_n \in [\lambda_l, \lambda_u]: S_{\lambda_n} \supset S_{\lambda_0}, S_{\lambda_n} \neq S_{\lambda_0} \}.\]

The following theorem shows that the $\lambda_n$ choice that minimizes $GIC$ will ensure that the variant of the conservative Lasso detects the correct model with probability approaching one. The conditions for the theorem are discussed in detail after the theorem statement.
\begin{thm}\label{thm5}
Under Conditions 1-7 below
\[ P \{ \inf_{\lambda_n \in \Omega_{-} \cup \Omega_{+}} GIC (\lambda_n) > GIC (\lambda_0) \} \to 1.\]
\end{thm}
Theorem \ref{thm5} yields that the $\lambda_n$ chosen by GIC will neither result in an underfit nor an overfit. Hence, consistent model selection is achieved.

%Conditions 1-4 are three subconditions in Condition 4, p.544 of \cite{fantang13}, and a beta-min type condition from Proposition 1 in p.535 of \cite{fantang13} and they are related to our weight and hence are important to verify. Remaining conditions are related to least squares design are already verified in \cite{fantang13}. 
The penalty function for each parameter is defined as $\rho_{\lambda_n} (|\beta_{j}|) = \lambda_n \tilde{w}_j |\beta_j|$ for the variant of the conservative Lasso. The partial derivative of the penalty function with respect to $\beta_j,\ j\in S_0$ evaluated at $\beta_{0,j}$ is
\begin{equation}
 sgn (\beta_{0,j}) \rho_{\lambda_n} ' (|\beta_{0,j}|).\label{pd1}
\end{equation}
%In case of penalty with ideal value of tuning parameter, $\lambda_0$, the partial derivative function has $\rho_{\lambda_0}'(.)$. $\rho_{\lambda_n}' (.), \rho_{\lambda_0}' (.)$ functions will be explained in the proof of Theorem \ref{thm5}.   

{\bf Condition 1}. For each $\lambda_n$, $\rho_{\lambda_n}' (t)$ is non-increasing over $t \in (0,\infty)$.

{\bf Condition 2}. There is a  $\lambda_0 \in [ \lambda_l, \lambda_u]$ such that $S_{\lambda_0} = S_0$, and 
\[ \|\tilde{\beta}_{\lambda_0} - \beta_0 \|_2 = O_p ( n^{-\pi}),\]
with $0< \pi < 1/2$.

{\bf Condition 3}. $n^{\pi} \min_{j \in S_0} | \beta_{0,j}| \to \infty$, as $n \to \infty$.

{\bf Condition 4}. $\rho_{\lambda_0}' (\frac{1}{2} \min_{j \in S_0} | \beta_{0,j}|) = o(s_0^{-1/2} n^{-1/2} [\log \log(n) \log(p)]^{1/2})$.

{\bf Condition 5}. For any $S \subset \{1,2,..., p\}$ such that $|S| \le K_1$, $K_1 >s_0$, $K_1 = o(n)$ the 
minimum eigenvalue of $n^{-1} X_S' X_S$ is bounded from below by $c_1>0$, and the maximum eigenvalue is bounded from above by $1/c_1$. 

{\bf Condition 6}. The design matrix satisfies $\| X\|_{\infty} = O (n^{1/2 - \tau_1})$ with $\tau_1 \in (1/3, 1/2]$ and $\log(p)=O(n^{\kappa_1})$, for some $0< \kappa_1 < 1$.

{\bf Condition 7}. Let $\delta_n$ be as in (3.2) of \cite{fantang13}.We assume  
$\delta_n K_1^{-1} \sqrt{n/\log(p)} \to \infty$, and \[ n \delta_n/(s_0 \log\log(n) \log(p)) \to \infty.\]

Conditions 1-3 are Condition 4 in p.544 of \cite{fantang13}. Our Condition 4 is in the statement of Proposition 1 on p.535 of \cite{fantang13}. Condition 5 here is Condition 2 on p.544 of \cite{fantang13}. Condition 6 is a condition on p.537 of Theorem 2 of \cite{fantang13}. Condition 7 is in p.539, Corollary 1 of  \cite{fantang13}. $\delta_n$ is a measure of the smallest signal strength of the truly relevant covariates. Conditions 5-7 are related to the linear model and have already been verified in \cite{fantang13}.

Conditions 1-7 here replace Assumptions 1-2, and the beta-min type condition in Lemma \ref{lt1}, and Theorem \ref{thm4}. Conditions 1-7 are more restrictive than Assumptions 1-2.

\textbf{Further discussion of Conditions 1-7}
We now discuss Conditions 1-7 in more detail in our setting to better understand when  Corollary 1 in \cite{fantang13} applies. 

Let us start by verifying Condition 1. For all $t \in (0, \infty)$, the variant of the conservative lasso
\[ \rho_{\lambda_n}' (t) = \lambda_n 1_{ \{ |\hat{\beta}_{L,j}| \le \lambda_{prec} \}}.\]
which is constant in $t$. 

Regarding Condition 2, as Theorem \ref{thm1} applies to the variant of conservative Lasso as well, we get that
\[ \| \tilde{\beta} - \beta_0 \|_2 \le \| \hat{\beta} - \beta_0 \|_1 = O_p ( \lambda_n s_0).\]
In the case of deterministic regressors, and Gaussian random errors, $\lambda_n = O ( \sqrt{\log(p)/n})$, so Condition 2 will be fulfilled if $\sqrt{\log(p)/n} s_0 = O(1/n^{\pi})$ for $0 < \pi < 1/2$.  

Condition 3 is a refinement of a beta-min type condition and restricts the size of the smallest absolute value of the non-zero coefficients.  %In the case of variant of conservative lasso, we need by using Theorem \ref{thm4}, $n^{\pi} \lambda_n \to \infty$. These are more restrictive than beta-min type conditions that we used. 

Condition 4  is the following in case of  the variant of conservative lasso, 
\[ \rho_{\lambda_0}' (\frac{1}{2} \min_{j \in S_0} |\beta_{0,j}|) = \lambda_0 1_{ \{ \frac{1}{2} \min_{j \in S_0} |\beta_{0,j}| \le \lambda_{prec} \}}.\]
With the beta-min condition in Theorem \ref{thm4}, $\min_{j \in S_0} | \beta_{0,j}| > 2 \lambda_{prec}$, we have 
$\frac{1}{2} \min_{j \in S_0} | \beta_{0,j}| >  \lambda_{prec}$, so the indicator is always zero such that $
\rho_{\lambda_0}' (\frac{1}{2} \min_{j \in S_0} |\beta_{0,j}|)=0$ implying that Condition 4 is trivially satisfied.

Conditions 5-6 are about design of the regression and are used by \cite{fantang13} in the least squares case. They are more restrictive than our Assumption 1. Condition 7 is related to underfit of a model in least squares.

\subsection*{Appendix C}\label{AppC}
We first show why $\hat{\Theta}$ constructed by nodewise regressions is an approximate inverse of $\hat{\Sigma}$. Then we link the inverse of the population covariance matrix $\Theta$ to linear regression.

We show that 
\begin{equation*}
\|  \hat{\Theta}_j'\hat{\Sigma} - e_j' \|_{\infty} 
\leq
\frac{\lambda_{node,n}}{\hat{\tau}_j^2}.
\end{equation*}
for $j=1,...,p$ as claimed in (\ref{2.8}). First, note that
\begin{equation}
sgn(\hat{\gamma}_j)' \hat{\Gamma}_j \hat{\gamma}_j = \enVert[1]{\hat{\Gamma}_j \hat{\gamma}_j}_1,\label{2.3}
\end{equation}
where $sgn(\hat{\gamma}_j)=\del[1]{sgn(\hat{\gamma}_{j,k}),\ k=1,...,p, k\neq j}$. Therefore, postmultiplying the Karush-Kuhn-Tucker conditions (written as a row vector) of the problem (\ref{NodeCLObj}) by $\hat{\gamma}_j$ and adding  $(X_j - X_{-j} \hat{\gamma}_j)' X_j/n$ to both sides yields
\begin{equation}
\frac{(X_j - X_{-j} \hat{\gamma}_j)'(X_j - X_{-j} \hat{\gamma}_j)}{n} + \lambda_{node,n} \enVert[1]{\hat{\Gamma}_j \hat{\gamma}_j}_1 
=
 \frac{(X_j - X_{-j} \hat{\gamma}_j)' X_j}{n}.\label{2.4}
\end{equation}
Next, we recognize the left hand side of (\ref{2.4}) as $\hat{\tau}_j^2$ such that
\begin{equation}
\hat{\tau}_j^2
=
\frac{(X_j - X_{-j} \hat{\gamma}_j)' X_j}{n}\label{2.5}.
\end{equation}
Dividing each side of the above display by $\hat{\tau}_j^2$ (we shall later rigorously argue that $\hat{\tau}_j^2$ is bounded away from zero with high probability) and using the definition of $\hat{\Theta}_j$ implies that 
\begin{equation}
1
=
 \frac{(X_j - X_{-j} \hat{\gamma}_j)' X_j}{\hat{\tau}_j^2n}
=
 \frac{(X\hat{\Theta}_j)'X_j}{n}
=
\frac{\hat{\Theta}_j'X'X_j}{n},\label{2.6}
\end{equation}
which shows that the $j$'th diagonal element of $\hat{\Theta}\hat{\Sigma}$ equals exactly one. It remains to consider the off-diagonal elements of $\hat{\Theta}\hat{\Sigma}$. To this end, note that the Karush-Kuhn-Tucker conditions for the problem (\ref{NodeCLObj}) can be written as
\[ \hat{\kappa}_j = \frac{\hat{\Gamma}_j^{-1} X_{-j}' (X_j - X_{-j} \hat{\gamma}_j)}{n \lambda_{node,n}}.\]
Using $\| \hat{\kappa}_j \|_{\infty} \le 1$ yields
\begin{align*}
\enVert[3]{\frac{\hat{\Gamma}_j^{-1} X_{-j}' (X_j - X_{-j} \hat{\gamma}_j)}{n \lambda_{node,n}}}_{\infty} 
= 
\enVert[0]{\hat{\kappa}_j}
\le 1,
\end{align*}
which is equivalent to 
\[  \frac{\| \hat{\Gamma}_j^{-1} X_{-j}' X \hat{C}_j \|_{\infty} }{n} \le \lambda_{node,n},\]
since $ (X_j - X_{-j} \hat{\gamma}_j) = X \hat{C}_j$. Then, dividing both sides of the above display by $\hat{\tau}_j^2$ and using that $\hat{\Theta}_j=\frac{\hat{C}_j}{\hat{\tau}_j^2}$ implies that
\[ \frac{\| \hat{\Gamma}_j^{-1} X_{-j}' X \hat{\Theta}_j \|_{\infty} }{n} \le \frac{\lambda_{node,n}}{\hat{\tau}_j^2}.\]
Thus,  
\begin{equation}
\frac{ \| X_{-j}' X \hat{\Theta}_j\|_{\infty}}{n} 
=
\frac{ \| \hat{\Gamma}_j \hat{\Gamma}_j^{-1} X_{-j}' X \hat{\Theta}_j\|_\infty}{n}
\le   
 \| \hat{\Gamma}_j \|_{\ell_\infty}  \frac{\| \hat{\Gamma}_j^{-1}   X_{-j}' X \hat{\Theta}_j\|_{\infty}}{n}   
\leq
\frac{\lambda_{node,n}}{\hat{\tau}_j^2},\label{2.7a}
\end{equation}
where we have used that $ \| \hat{\Gamma}_j \|_{\ell_\infty}$ equals the largest diagonal element of $\hat{\Gamma}_j$ since $\hat{\Gamma}_j$ is diagonal and that all diagonal elements are less than one by observation 2 after (\ref{ConsLassoObj}). Of course (\ref{2.7a}) is equivalent to
\begin{align}
\frac{\enVert[1]{\hat{\Theta}_j'X'X_{-j}}_\infty}{n}
\leq
\frac{\lambda_{node,n}}{\hat{\tau}_j^2}.\label{2.7}
\end{align}
In total, denoting by $e_j$ the $j$'th $p\times 1$ unit vector, (\ref{2.6}) and (\ref{2.7}) yield
\begin{equation*}
\|  \hat{\Theta}_j'\hat{\Sigma} - e_j' \|_{\infty} 
\leq
\frac{\lambda_{node,n}}{\hat{\tau}_j^2}.
\end{equation*}

\bibliographystyle{chicagoa}	% (uses file "plain.bst")
\bibliography{references-2-2-2-4-4}		% expects file "myrefs.bib"

% AOS,AOAS: If there are supplements please fill:
%\begin{supplement}[id=suppA]
%  \sname{Supplement A}
%  \stitle{Title}
%  \slink[doi]{10.1214/00-AOASXXXXSUPP}
%  \sdatatype{.pdf}" 
%  \sdescription{Some text}
%\end{supplement}

\end{document}